\renewcommand*{\backrefalt}[4]{%
    \ifcase #1 \footnotesize{(Not cited.)}%
    \or        \footnotesize{(Cited on page~#2.)}%
    \else      \footnotesize{(Cited on pages~#2.)}%
    \fi}
\newtheorem{theorem}{Theorem}[section]
\newtheorem{lemma}[theorem]{Lemma}
\newtheorem{proposition}[theorem]{Proposition}
\newtheorem{remark}[theorem]{Remark}
\numberwithin{equation}{section}
\newcommand{\BB}{\mathbb{B}}
\newcommand{\op}{\textnormal{op}}
\newcommand{\argmin}{\mathop{\rm argmin}}
\newcommand{\ECal}{\mathcal{E}}
\newcommand{\XCal}{\mathcal{X}}
\newcommand{\br}{\mathbb{R}}
\newcommand{\ba}{\begin{array}}
\newcommand{\ea}{\end{array}}
\newcommand{\FCal}{\mathcal{F}}
\begin{document}


\begin{center}

{\bf{\LARGE{A Control-Theoretic Perspective on Optimal \\ [.2cm]  High-Order Optimization}}}

\vspace*{.2in}
{\large{ \begin{tabular}{c}
Tianyi Lin$^\diamond$ \and Michael I. Jordan$^{\diamond, \dagger}$ \\
\end{tabular}
}}

\vspace*{.2in}

\begin{tabular}{c}
Department of Electrical Engineering and Computer Sciences$^\diamond$ \\
Department of Statistics$^\dagger$ \\
University of California, Berkeley \\
\end{tabular}

\vspace*{.2in}

\today

\vspace*{.2in}

\begin{abstract}
We provide a control-theoretic perspective on optimal tensor algorithms for minimizing a convex function in a finite-dimensional Euclidean space. Given a function $\Phi: \br^d \rightarrow \br$ that is convex and twice continuously differentiable, we study a closed-loop control system that is governed by the operators $\nabla \Phi$ and $\nabla^2 \Phi$ together with a feedback control law $\lambda(\cdot)$ satisfying the algebraic equation $(\lambda(t))^p\|\nabla\Phi(x(t))\|^{p-1} = \theta$ for some $\theta \in (0, 1)$. Our first contribution is to prove the existence and uniqueness of a local solution to this system via the Banach fixed-point theorem. We present a simple yet nontrivial Lyapunov function that allows us to establish the existence and uniqueness of a global solution under certain regularity conditions and analyze the convergence properties of trajectories. The rate of convergence is $O(1/t^{(3p+1)/2})$ in terms of objective function gap and $O(1/t^{3p})$ in terms of squared gradient norm. Our second contribution is to provide two algorithmic frameworks obtained from discretization of our continuous-time system, one of which generalizes the large-step A-HPE framework of~\citet{Monteiro-2013-Accelerated} and the other of which leads to a new optimal $p$-th order tensor algorithm. While our discrete-time analysis can be seen as a simplification and generalization of~\citet{Monteiro-2013-Accelerated}, it is largely motivated by the aforementioned continuous-time analysis, demonstrating the fundamental role that the feedback control plays in optimal acceleration and the clear advantage that the continuous-time perspective brings to algorithmic design. A highlight of our analysis is that we show that all of the $p$-th order optimal tensor algorithms that we discuss minimize the squared gradient norm at a rate of $O(k^{-3p})$, which complements the recent analysis~\citep{Gasnikov-2019-Optimal,Jiang-2019-Optimal,Bubeck-2019-Near}. 
\end{abstract}

\end{center}

\section{Introduction}
The interplay between continuous-time and discrete-time perspectives on dynamical systems has made a major impact on optimization theory. Classical examples include (1) the interpretation of steepest descent, heavy ball and proximal algorithms as the explicit and implicit discretization of gradient-like dissipative systems~\citep{Polyak-1987-Introduction,Antipin-1994-Minimization,Attouch-1996-Dynamical,Alvarez-2000-Minimizing,Attouch-2000-Heavy,Alvarez-2001-Inertial}; and (2) the explicit discretization of Newton-like and Levenberg-Marquardt regularized systems~\citep{Alvarez-1998-Dynamical,Attouch-2001-Second,Alvarez-2002-Second,Attouch-2011-Continuous,Attouch-2012-Second,Mainge-2013-First,Attouch-2013-Global,Abbas-2014-Newton,Attouch-2016-Dynamic,Attouch-2020-Newton,Attouch-2020-Continuous}, which give standard and regularized Newton algorithms. One particularly salient way that these connections have spurred research is via the use of Lyapunov functions to transfer asymptotic behavior and rates of convergence between continuous time and discrete time. 

Recent years have witnessed a flurry of new research focusing on continuous-time perspectives on Nesterov's accelerated gradient algorithm (NAG)~\citep{Nesterov-1983-Method} and related methods~\citep{Guler-1992-New,Beck-2009-Fast,Tseng-2010-Approximation,Nesterov-2013-Gradient}. These perspectives arise from derivations that obtain differential equations as limits of discrete dynamics~\citep{Su-2016-Differential,Krichene-2015-Accelerated,Attouch-2016-Rate,Vassilis-2018-Differential,Shi-2018-Understanding,Muehlebach-2019-Dynamical,Diakonikolas-2019-Approximate,Attouch-2019-Convergence,Sebbouh-2020-Convergence}, including quasi-gradient formulations and Kurdyka-Lojasiewicz theory~\citep{Begout-2015-Damped,Attouch-2020-Fast} (see the references~\citep{Huang-2006-Gradient,Chergui-2008-Convergence,Chill-2010-Gradient,Barta-2012-Every,Barta-2016-Convergence} for geometrical perspective on the topic), inertial gradient systems with constant or asymptotic vanishing damping~\citep{Su-2016-Differential,Attouch-2017-Asymptotic,Attouch-2018-Fast,Attouch-2019-Fast} and their extension to maximally monotone operators~\citep{Bot-2016-Second,Attouch-2018-Convergence,Attouch-2020-Convergence}, Hessian-driven damping~\citep{Alvarez-2002-Second,Attouch-2012-Second,Attouch-2016-Fast,Shi-2018-Understanding,Boct-2020-Tikhonov,Attouch-2020-First,Attouch-2021-Fast}, time scaling~\citep{Attouch-2019-Fast,Attouch-2019-Time,Attouch-2021-Fast,Attouch-2021-ADMM}, dry friction damping~\citep{Adly-2020-Finite,Adly-2021-First}, closed-loop damping~\citep{Attouch-2020-Fast,Attouch-2021-Fast}, control-theoretic design~\citep{Lessard-2016-Analysis,Hu-2017-Dissipativity,Fazlyab-2018-Analysis} and Lagrangian and Hamiltonian frameworks~\citep{Wibisono-2016-Variational,Betancourt-2018-Symplectic,Maddison-2018-Hamiltonian,Donoghue-2019-Hamiltonian,Diakonikolas-2020-Generalized,Francca-2020-Conformal,Muehlebach-2020-Optimization,Francca-2021-Dissipative}. Examples of hitherto unknown results that have arisen from this line of research include the fact that NAG achieves a fast rate of $o(k^{-2})$ in terms of objective function gap~\citep{May-2017-Asymptotic,Attouch-2016-Rate,Attouch-2018-Fast} and $O(k^{-3})$ in terms of squared gradient norm~\citep{Shi-2018-Understanding}. 

The introduction of the Hessian-driven damping into continuous-time dynamics has been a particular milestone in optimization and mechanics. The precursor of this perspective can be found in the variational characterization of the Levenberg-Marquardt method and Newton's method~\citep{Alvarez-1998-Dynamical}, a development that inspired work on continuous-time Newton-like approaches for convex minimization~\citep{Alvarez-1998-Dynamical,Attouch-2001-Second} and monotone inclusions~\citep{Attouch-2011-Continuous,Mainge-2013-First,Attouch-2013-Global,Abbas-2014-Newton,Attouch-2016-Dynamic,Attouch-2020-Newton,Attouch-2020-Continuous}. Building on these works, ~\cite{Alvarez-2002-Second} distinguished Hessian-driven damping from classical continuous Newton formulations and showed its importance in optimization and mechanics. Subsequently, ~\citet{Attouch-2016-Fast} demonstrated the connection between Hessian-driven damping and the forward-backward algorithms in Nesterov acceleration (e.g., FISTA), and combined Hessian-driven damping with asymptotically vanishing damping~\citep{Su-2016-Differential}. The resulting dynamics takes the following form:
\begin{equation}\label{Sys:asymptotic-Hessian}
\ddot{x}(t) + \frac{\alpha}{t}\dot{x}(t) + \beta\nabla^2\Phi(x(t))\dot{x}(t) + \nabla\Phi(x(t)) = 0,
\end{equation}    
where it is worth mentioning that the presence of the Hessian does not entail numerical difficulties since it arises in the form $\nabla^2\Phi(x(t))\dot{x}(t)$, which is the time derivative of the function $t \mapsto \nabla \Phi(x(t))$. Further work in this vein appeared in~\citet{Shi-2018-Understanding}, where Nesterov acceleration was interpreted via multiscale limits that yield high-resolution differential equations:
\begin{equation}\label{Sys:high-resolution}
\ddot{x}(t) + \frac{3}{t}\dot{x}(t) + \sqrt{s}\nabla^2\Phi(x(t))\dot{x}(t) + \left(1+\frac{3\sqrt{s}}{2t}\right)\nabla\Phi(x(t)) = 0.
\end{equation}  
These limits were used in particular to distinguish between Polyak's heavy-ball method and NAG, which are not distinguished by naive limiting arguments that yield the same differential equation for both.

Althought the coefficients are different in Eq.~\eqref{Sys:asymptotic-Hessian} and Eq.~\eqref{Sys:high-resolution}, both contain Hessian-driven damping, which corresponds to a correction term obtained via discretization, and which provides fast convergence to zero of the gradients and reduces the oscillatory aspects. Using this viewpoint, several subtle analyses have been recently provided in work independent of ours~\citep{Attouch-2020-Fast,Attouch-2021-Fast}. In particular, they develop a convergence theory for a general inertial system with asymptotic vanishing damping and Hessian-driven damping. Under certain conditions, the fast convergence is guaranteed in terms of both objective function gap and squared gradient norm. Beyond the aforementioned line of work, however, most of the focus in using continuous-time perspectives to shed light on acceleration has been restricted to the setting of first-order optimization algorithms. As noted in a line of recent work~\citep{Monteiro-2013-Accelerated,Nesterov-2018-Lectures,Arjevani-2019-Oracle,Gasnikov-2019-Optimal,Jiang-2019-Optimal,Bubeck-2019-Near,Song-2021-Unified}, there is a significant gap in our understanding of optimal $p$-th order tensor algorithms with $p \geq 2$, with existing algorithms and analysis being much more involved than NAG.

In this paper, we show that a continuous-time perspective helps to bridge this gap and yields a unified perspective on first-order and higher-order acceleration. We refer to our work as a \emph{control-theoretic perspective}, as it involves the study of a closed-loop control system that can be viewed as a differential equation that is governed by a feedback control law, $\lambda(\cdot)$, satisfying the algebraic equation $(\lambda(t))^p\|\nabla\Phi(x(t))\|^{p-1} = \theta$ for some $\theta \in (0, 1)$. Our approach is similar to that of~\citet{Attouch-2013-Global,Attouch-2016-Dynamic}, for the case without inertia, and it provides a first step into a theory of the autonomous inertial systems that link closed-loop control and optimal high-order tensor algorithms. Mathematically, our system can be written as follows:
\begin{equation}\label{Sys:general}
\ddot{x}(t) + \alpha(t)\dot{x}(t) + \beta(t)\nabla^2\Phi(x(t))\dot{x}(t) + b(t)\nabla\Phi(x(t)) = 0,
\end{equation}
where $(\alpha, \beta, b)$ explicitly depends on the variables $(x, \lambda, a)$, the parameters $c > 0$, $\theta \in (0, 1)$ and the order $p \in \{1, 2, \ldots\}$:  
\begin{equation}\label{Sys:choice-feedback}
\begin{array}{ll}
& \alpha(t) = \frac{2\dot{a}(t)}{a(t)} - \frac{\ddot{a}(t)}{\dot{a}(t)}, \quad \beta(t) = \frac{(\dot{a}(t))^2}{a(t)}, \quad b(t) = \frac{\dot{a}(t)(\dot{a}(t) + \ddot{a}(t))}{a(t)}, \\
& a(t) = \frac{1}{4}(\int_0^t \sqrt{\lambda(s)} ds + c)^2, \quad (\lambda(t))^p\|\nabla\Phi(x(t))\|^{p-1} = \theta.   
\end{array} 
\end{equation}
The initial condition is $x(0) = x_0 \in \{x \in \br^d \mid \|\nabla\Phi(x)\| \neq 0\}$ and $\dot{x}(0) \in \br^d$. Note that this condition is not restrictive since $\|\nabla\Phi(x_0)\| = 0$ implies that the optimization problem has been already solved. A key ingredient in our system is the algebraic equation $(\lambda(t))^p\|\nabla\Phi(x(t))\|^{p-1} = \theta$, which links the feedback control law $\lambda(\cdot)$ and the gradient norm $\|\nabla\Phi(x(\cdot))\|$, and which generalizes an equation appearing in~\citet{Attouch-2016-Dynamic} for modeling the proximal Newton algorithm. We recall that Eq.~\eqref{Sys:general} has also been studied in~\citet{Attouch-2020-Fast,Attouch-2021-Fast}, who provide a general convergence result when $(\alpha, \beta, b)$ satisfies certain conditions. However, when $p \geq 2$, the specific choice of $(\alpha, \beta, b)$ in Eq.~\eqref{Sys:choice-feedback} does not have an analytic form and it thus seems difficult to verify whether $(\alpha, \beta, b)$ in our control system satisfies that condition (see~\citet[Theorem~2.1]{Attouch-2021-Fast})). This topic is beyond the scope of this paper and we leave its investigation to future work.  

\paragraph{Our contribution.} Throughout the paper, unless otherwise indicated, we assume that 
\begin{quote}
\centering
\textit{$\Phi: \br^d \rightarrow \br$ is convex and twice continuously differentiable and the set of global minimizers of $\Phi$} is nonempty.
\end{quote}
As we shall see, our main results on the existence and uniqueness of solutions and convergence properties of trajectories are valid under this general assumption. We also believe that this general setting paves the way for extensions to nonsmooth convex functions or maximal monotone operators (replacing the gradient by the subdifferential or the operator)~\citep{Alvarez-2002-Second,Attouch-2012-Second,Attouch-2016-Fast}. This is evidenced by the equivalent first-order reformulations of our closed-loop control system in time and space (without the occurrence of the Hessian). However, we do not pursue these extensions in the current paper. 

The main contributions of our work are the following: 
\begin{enumerate}
\item We study the closed-loop control system of Eq.~\eqref{Sys:general} and Eq.~\eqref{Sys:choice-feedback} and prove the existence and uniqueness of a local solution. We show that when $p = 1$ and $c = 0$, our feedback law reduces to $\lambda(t) = \theta$ and our overall system reduces to the high-resolution differential equation studied in~\citet{Shi-2018-Understanding}, showing explicitly that our system extends the high-resolution framework from first-order optimization to high-order optimization.
 
\item We construct a simple yet nontrivial Lyapunov function that allows us to establish the existence and uniqueness of a global solution under regularity conditions (see Theorem~\ref{Theorem:Global-Existence-Uniquess}). We also use the Lyapunov function to analyze the convergence rates of the solution trajectories; in particular, we show that the convergence rate is $O(t^{-(3p+1)/2})$ in terms of objective function gap and $O(t^{-3p})$ in terms of squared gradient norm. 

\item We provide two algorithmic frameworks based on the implicit discretization of our closed-looped control system, one of which generalizes the \textsf{large-step A-HPE} in~\citet{Monteiro-2013-Accelerated}. Our iteration complexity analysis is largely motivated by the aforementioned continuous-time analysis, simplifying the analysis in~\citet{Monteiro-2013-Accelerated} for the case of $p=2$ and generalizing  it to $p > 2$ in a systematic manner (see Theorem~\ref{Theorem:CAFI-Main} and~\ref{Theorem:CAFII-Main} for the details).

\item We combine the algorithmic frameworks with an approximate tensor subroutine, yielding a suite of optimal $p$-th order tensor algorithms for minimizing a convex smooth function $\Phi$ which has Lipschitz $p$-th order derivatives. The resulting algorithms include not only the algorithms studied in the previous works~\citep{Gasnikov-2019-Optimal,Jiang-2019-Optimal,Bubeck-2019-Near} but also yield a new optimal $p$-th order tensor algorithm. A highlight of our analysis is to show that all these $p$-th order optimal algorithms minimize the squared gradient norm at a rate of $O(k^{-3p})$, complementing the recent analysis in the aforementioned works.   
\end{enumerate}

\paragraph{Further related work.} In addition to the aforementioned works, we provide a few additional remarks regarding related work on accelerated first-order and high-order algorithms for convex optimization. 

A significant body of recent work in convex optimization focuses on understanding the underlying principle behind Nesterov's accelerated first-order algorithm (NAG)~\citep{Nesterov-1983-Method,Nesterov-2018-Lectures}, with a particular focus on the interpretation of Nesterov acceleration as a temporal discretization of a continuous-time dynamical system~\citep{Krichene-2015-Accelerated,Su-2016-Differential,Attouch-2016-Rate,May-2017-Asymptotic,Attouch-2019-Rate,Attouch-2018-Fast,Vassilis-2018-Differential,Shi-2018-Understanding,Attouch-2018-Fast,Diakonikolas-2019-Approximate,Muehlebach-2019-Dynamical,Attouch-2019-Convergence,Attouch-2019-Fast,Sebbouh-2020-Convergence,Attouch-2020-Convergence,Attouch-2020-First,Attouch-2020-Fast,Attouch-2021-Fast,Adly-2021-First}. A line of new first-order algorithms have been obtained from the continuous-time dynamics by various advanced numerical integration strategies~\citep{Scieur-2017-Integration,Betancourt-2018-Symplectic,Zhang-2018-Direct,Maddison-2018-Hamiltonian,Shi-2019-Acceleration,Wilson-2019-Accelerating}. In particular,~\citet{Scieur-2017-Integration} showed that a basic gradient flow system and multi-step integration scheme yields a class of accelerated first-order optimization algorithms.~\citet{Zhang-2018-Direct} applied Runge-Kutta integration to an inertial gradient system without Hessian-driven damping~\citep{Wibisono-2016-Variational} and showed that the resulting algorithm is faster than NAG when the objective function is sufficiently smooth and when the order of the integrator is sufficiently large.~\citet{Maddison-2018-Hamiltonian} and~\citet{Francca-2020-Conformal} both considered conformal Hamiltonian systems and showed that the resulting discrete-time algorithm achieves fast convergence under certain smoothness conditions. Very recently,~\citet{Shi-2019-Acceleration} have rigorously justified the use of symplectic Euler integrators compared to explicit and implicit Euler integration, which was further studied by~\citet{Muehlebach-2020-Optimization} and~\citet{Francca-2021-Dissipative}. Unfortunately, none of these approaches are suitable for interpreting optimal acceleration in high-order tensor algorithms. 

Research on acceleration in the second-order setting dates back to Nesterov's accelerated cubic regularized Newton algorithm (ACRN)~\citep{Nesterov-2008-Accelerating} and Monteiro and Svaiter's accelerated Newton proximal extragradient (A-NPE)~\citep{Monteiro-2013-Accelerated}. The ACRN algorithm was extended to a $p$-th order tensor algorithm with the improved convergence rate of $O(k^{-(p+1)})$~\citep{Baes-2009-Estimate} and an adaptive $p$-th order tensor algorithm with essentially the same rate~\citep{Jiang-2020-Unified}. This novel extension was also revisited by~\citet{Nesterov-2019-Implementable} with a discussion on the efficient implementation of a third-order tensor algorithm. Meanwhile, within the alternative A-NPE framework, a $p$-th order tensor algorithm was studied in a line of works~\citep{Gasnikov-2019-Optimal,Jiang-2019-Optimal,Bubeck-2019-Near} and was shown to achieve a convergence rate of $O(k^{-(3p+1)/2})$, matching the lower bound~\citep{Arjevani-2019-Oracle}. Subsequently, a high-order coordinate descent algorithm was studied in~\citet{Amaral-2020-Complexity}, and very recently, the high-order A-NPE framework has been specialized to the strongly convex setting~\citep{Alves-2021-Variants}, generalizing the discrete-time algorithms in this paper with an improved convergence rate. Beyond the setting of Lipschitz continuous derivatives, high-order algorithms and their accelerated variants have been adapted for more general setting with H\"{o}lder continuous derivatives~\citep{Grapiglia-2017-Regularized,Grapiglia-2019-Accelerated,Doikov-2019-Local,Grapiglia-2020-Tensor,Grapiglia-2020-Stationary} and an optimal algorithm has been proposed in~\citet{Song-2021-Unified}. Other settings include structured convex non-smooth minimization~\citep{Bullins-2020-Highly}, convex-concave minimax optimization and monotone variational inequalities~\citep{Bullins-2020-Higher,Ostroukhov-2020-Tensor}, and structured smooth convex minimization~\citep{Nesterov-2020-Superfast,Nesterov-2020-Inexact,Kamzolov-2020-Near,Kamzolov-2020-Hyperfast}. In the nonconvex setting, high-order algorithms have been also proposed and analyzed~\citep{Birgin-2016-Evaluation,Birgin-2017-Worst,Martinez-2017-High,Cartis-2018-Second,Cartis-2019-Universal}.

Unfortunately, the derivations of these algorithms do not flow from a single underlying principle but tend to involve case-specific algebra. As in the case of first-order algorithms, one would hope that a continuous-time perspective would offer unification, but the only work that we are aware of in this regard is~\citet{Song-2021-Unified}, and the connection to dynamical systems in that work is unclear. In particular, some aspects of the UAF algorithm (see~\citet[Algorithm~5.1]{Song-2021-Unified}), including the conditions in Eq. (5.31) and Eq. (5.32), do not have a continuous-time interpretation but rely on case-specific algebra. Moreover, their continuous-time framework reduces to an inertial system without Hessian-driven damping in the first-order setting, which has been proven to be an inaccurate surrogate as mentioned earlier.

We have been also aware of other type of discrete-time algorithms~\citep{Zhang-2018-Direct,Maddison-2018-Hamiltonian,Wilson-2019-Accelerating} which were derived from continuous-time perspective with theoretical guarantee under certain condition. In particular,~\citet{Wilson-2019-Accelerating} derived a family of first-order algorithms by appeal to the explicit time discretization of the accelerated rescaled gradient dynamics. Their new algorithms are guaranteed to (surprisingly) achieve the same convergence rate as the existing optimal tensor algorithms~\citep{Gasnikov-2019-Optimal,Jiang-2019-Optimal,Bubeck-2019-Near}. However, the strong smoothness assumption is necessary and might rule out many interesting application problems. In contrast, all the optimization algorithms developed in this paper are applicable for \textit{general} convex and smooth problems with the optimal rate of convergence.

\paragraph{Organization.} The remainder of the paper is organized as follows. In Section~\ref{sec:control-system}, we study the closed-loop control system in Eq.~\eqref{Sys:general} and Eq.~\eqref{Sys:choice-feedback} and prove the existence and uniqueness of a local solution using the Banach fixed-point theorem. In Section~\ref{sec:Lyapunov}, we show that our system permits a simple yet nontrivial Lyapunov function which allows us to establish the existence and uniqueness of a global solution and derive convergence rates of solution trajectories. In Section~\ref{sec:optimal-algorithm}, we provide two conceptual algorithmic frameworks based on the implicit discretization of our closed-loop control system as well as specific optimal $p$-th order tensor algorithms. Our iteration complexity analysis is largely motivated by the continuous-time analysis of our system, demonstrating that these algorithms achieve fast gradient minimization. In Section~\ref{sec:conclusions}, we conclude our work with a brief discussion on future research directions.

\paragraph{Notation.} We use bold lower-case letters such as $x$ to denote vectors, and upper-case letters such as $X$ to denote tensors. For a vector $x \in \br^d$, we let $\|x\|$ denote its $\ell_2$ Euclidean norm and let $\BB_\delta(x) = \{x' \in \br^d \mid \|x'-x\| \leq \delta\}$ denote its $\delta$-neighborhood. For a tensor $X \in \br^{d_1 \times \cdots \times d_p}$, we define 
\begin{equation*}
X[z^1, \cdots, z^p] = \sum_{1 \leq i_j \leq d_j, 1 \leq j \leq p} \left[X_{i_1, \cdots, i_p}\right]z_{i_1}^1 \cdots z_{i_p}^p, 
\end{equation*}
and denote by $\|X\|_\op = \max_{\|z^i\|=1, 1 \leq j \leq p} X[z^1, \cdots, z^p]$ its operator norm. 

Fix $p \geq 1$, we define $\FCal_\ell^p(\br^d)$ as the class of convex functions on $\br^d$ with $\ell$-Lipschitz $p$-th order derivatives; that is, $f \in \FCal_\ell^p(\br^d)$ if and only if $f$ is convex and $\|\nabla^{(p)} f(x') - \nabla^{(p)} f(x)\|_\op \leq \ell\|x' - x\|$ for all $x, x' \in \br^d$ in which $\nabla^{(p)} f(x)$ is the $p$-th order derivative tensor of $f$ at $x \in \br^d$. More specifically, for $\{z^1, z^2, \ldots, z^p\} \subseteq \br^d$, we have
\begin{equation*}
\nabla^{(p)} f(x)[z^1, \ldots, z^p] = \sum_{1 \leq i_1, \cdots, i_p \leq d} \left[\frac{\partial^p f}{\partial x_{i_1} \cdots \partial x_{i_p}}(x)\right]z_{i_1}^1 \cdots z_{i_p}^p. 
\end{equation*}
Given a tolerance $\epsilon \in (0, 1)$, the notation $a = O(b(\epsilon))$ stands for an upper bound, $a \leq C b(\epsilon)$, in which $C > 0$ is independent of $\epsilon$. 

\section{The Closed-Loop Control System}\label{sec:control-system}
In this section, we study the closed-loop control system in Eq.~\eqref{Sys:general} and Eq.~\eqref{Sys:choice-feedback}. We start by rewriting our system as a first-order system in time and space (without the occurrence of the Hessian) which is important to our subsequent analysis and implicit time discretization. Then, we analyze the algebraic equation $(\lambda(t))^p\|\nabla\Phi(x(t))\|^{p-1} = \theta$ for $\theta \in (0, 1)$ and prove the existence and uniqueness of a local solution by appeal to the Banach fixed-point theorem. We conclude by discussing other systems in the literature that exemplify our general framework.

\subsection{First-order system in time and space}\label{subsec:FO}
We rewrite the closed-loop control system in Eq.~\eqref{Sys:general} and Eq.~\eqref{Sys:choice-feedback} as follows:
\begin{equation*}
\ddot{x}(t) + \alpha(t)\dot{x}(t) + \beta(t)\nabla^2\Phi(x(t))\dot{x}(t) + b(t)\nabla\Phi(x(t)) = 0,
\end{equation*}
where $(\alpha, \beta, b)$ explicitly depend on the variables $(x, \lambda, a)$, the parameters $c > 0$, $\theta \in (0, 1)$ and the order $p \in \{1, 2, \ldots\}$:  
\begin{equation*}
\begin{array}{ll}
& \alpha(t) = \frac{2\dot{a}(t)}{a(t)} - \frac{\ddot{a}(t)}{\dot{a}(t)}, \quad \beta(t) = \frac{(\dot{a}(t))^2}{a(t)}, \quad b(t) = \frac{\dot{a}(t)(\dot{a}(t) + \ddot{a}(t))}{a(t)}, \\
& a(t) = \frac{1}{4}(\int_0^t \sqrt{\lambda(s)} ds + c)^2, \quad (\lambda(t))^p\|\nabla\Phi(x(t))\|^{p-1} = \theta.   
\end{array} 
\end{equation*}
By multiplying both sides of the first equation by $\frac{a(t)}{\dot{a}(t)}$ and using the definition of $\alpha(t)$, $\beta(t)$ and $b(t)$, we have 
\begin{equation*}
\frac{a(t)}{\dot{a}(t)}\ddot{x}(t) + \left(2 - \frac{a(t)\ddot{a}(t)}{(\dot{a}(t))^2}\right)\dot{x}(t) + \dot{a}(t)\nabla^2\Phi(x(t))\dot{x}(t) + (\dot{a}(t) + \ddot{a}(t))\nabla\Phi(x(t)) = 0. 
\end{equation*}
Defining $z_1(t) = \frac{a(t)}{\dot{a}(t)}\dot{x}(t)$ and $z_2(t) = \dot{a}(t)\nabla\Phi(x(t))$, we have
\begin{equation*}
\dot{z}_1(t) = \frac{a(t)}{\dot{a}(t)}\ddot{x}(t) + \left(1 - \frac{a(t)\ddot{a}(t)}{(\dot{a}(t))^2}\right)\dot{x}(t), \quad \dot{z}_2(t) = \dot{a}(t)\nabla^2\Phi(x(t))\dot{x}(t) + \ddot{a}(t)\nabla\Phi(x(t)). 
\end{equation*}
Putting these pieces together yields 
\begin{equation*}
\dot{z}_1(t) + \dot{x}(t) + \dot{z}_2(t) = -\dot{a}(t)\nabla\Phi(x(t)). 
\end{equation*}
Integrating this equation over the interval $[0, t]$, we have
\begin{equation}\label{Prelim:FO-first}
z_1(t) + x(t) + z_2(t) = z_1(0) + x(0) + z_2(0) - \int_0^t \dot{a}(s)\nabla\Phi(x(s)) ds. 
\end{equation}
Since $x(0) = x_0 \in \{x \in \br^d \mid \|\nabla\Phi(x)\| \neq 0\}$, it is easy to verify that $\lambda(0)$ is well defined and determined by the algebraic equation $\lambda(0) = \theta^{\frac{1}{p}}\|\nabla\Phi(x_0)\|^{-\frac{p-1}{p}}$. Using the definition of $a(t)$, we have $a(0) = \frac{c^2}{4}$ and $\dot{a}(0) = \frac{c\theta^{\frac{1}{2p}}\|\nabla\Phi(x_0)\|^{-\frac{p-1}{2p}}}{2}$. Putting these pieces together with the definition of $z_1(t)$ and $z_2(t)$, we have
\begin{eqnarray*}
z_1(0) + x(0) + z_2(0) & = & \frac{a(0)}{\dot{a}(0)}\dot{x}(0) + x(0) + \dot{a}(0)\nabla\Phi(x(0)) \\
& & \hspace*{-8em} = \ x(0) + \frac{c\theta^{-\frac{1}{2p}}\dot{x}(0)\|\nabla\Phi(x(0))\|^{\frac{p-1}{2p}} + c\theta^{\frac{1}{2p}}\|\nabla\Phi(x(0))\|^{-\frac{p-1}{2p}}\nabla\Phi(x(0))}{2}. 
\end{eqnarray*}
This implies that $z_1(0) + x(0) + z_2(0)$ is completely determined by the initial condition and parameters $c>0$ and $\theta \in (0, 1)$. For simplicity, we define $v_0 := z_1(0) + x(0) + z_2(0)$ and rewrite Eq.~\eqref{Prelim:FO-first} in the following form: 
\begin{equation}\label{Prelim:FO-second}
\frac{a(t)}{\dot{a}(t)}\dot{x}(t) + x(t) + \dot{a}(t)\nabla\Phi(x(t)) = v_0 - \int_0^t \dot{a}(s)\nabla\Phi(x(s)) ds.  
\end{equation}
By introducing a new variable $v(t) = v_0 - \int_0^t \dot{a}(s)\nabla\Phi(x(s)) ds$, we rewrite Eq.~\eqref{Prelim:FO-second} in the following equivalent form:
\begin{equation*}
\dot{v}(t) + \dot{a}(t)\nabla \Phi(x(t)) = 0, \quad \dot{x}(t) + \frac{\dot{a}(t)}{a(t)}(x(t) - v(t)) + \frac{(\dot{a}(t))^2}{a(t)}\nabla\Phi(x(t)) = 0.  
\end{equation*}
Summarizing, the closed-loop control system in Eq.~\eqref{Sys:general} and Eq.~\eqref{Sys:choice-feedback} can be written as a first-order system in time and space as follows:
\begin{equation}\label{Sys:FO}
\left\{\begin{array}{ll}
& \dot{v}(t) + \dot{a}(t)\nabla \Phi(x(t)) = 0 \\ 
& \dot{x}(t) + \frac{\dot{a}(t)}{a(t)}(x(t) - v(t)) + \frac{(\dot{a}(t))^2}{a(t)}\nabla\Phi(x(t)) = 0 \\
& a(t) = \frac{1}{4}(\int_0^t \sqrt{\lambda(s)} ds + c)^2 \\
& (\lambda(t))^p\|\nabla\Phi(x(t))\|^{p-1} = \theta \\ 
& (x(0), v(0)) = (x_0, v_0).
\end{array} \right.
\end{equation}
We also provide another first-order system in time and space with different variable $(x, v, \lambda, \gamma)$. We study this system because its implicit time discretization leads to a new algorithmic framework which does not appear in the literature. This first-order system is summarized as follows:
\begin{equation}\label{Sys:FO-inverse}
\left\{\begin{array}{ll}
& \dot{v}(t) - \frac{\dot{\gamma}(t)}{\gamma^2(t)}\nabla \Phi(x(t)) = 0 \\
& \dot{x}(t) - \frac{\dot{\gamma}(t)}{\gamma(t)}(x(t) - v(t)) +  \frac{(\dot{\gamma}(t))^2}{(\gamma(t))^3}\nabla\Phi(x(t)) = 0 \\ 
& \gamma(t) = 4(\int_0^t \sqrt{\lambda(s)} ds + c)^{-2} \\
& (\lambda(t))^p\|\nabla\Phi(x(t))\|^{p-1} = \theta \\
& (x(0), v(0)) = (x_0, v_0).
\end{array} \right.
\end{equation}
\begin{remark}
The first-order systems in Eq.~\eqref{Sys:FO} and Eq.~\eqref{Sys:FO-inverse} are equivalent. It suffices to show that 
\begin{equation*}
\dot{a}(t) = - \frac{\dot{\gamma}(t)}{\gamma^2(t)}, \quad \frac{\dot{a}(t)}{a(t)} = - \frac{\dot{\gamma}(t)}{\gamma(t)}, \quad \frac{(\dot{a}(t))^2}{a(t)} = \frac{(\dot{\gamma}(t))^2}{(\gamma(t))^3}. 
\end{equation*}
By the definition of $a(t)$ and $\gamma(t)$, we have $a(t) = \frac{1}{\gamma(t)}$ which implies that $\dot{a}(t) = - \frac{\dot{\gamma}(t)}{\gamma^2(t)}$. 
\end{remark}
\begin{remark}
The first-order systems in Eq.~\eqref{Sys:FO} and Eq.~\eqref{Sys:FO-inverse} pave the way for extensions to nonsmooth convex functions or maximal monotone operators (replacing the gradient by the subdifferential or the operator), as done in~\citet{Alvarez-2002-Second} and~\citet{Attouch-2012-Second,Attouch-2016-Fast}. In this setting, either the open-loop case or the closed-loop case without inertia has been studied in the literature~\citep{Attouch-2011-Continuous,Mainge-2013-First,Attouch-2013-Global,Abbas-2014-Newton,Attouch-2016-Dynamic,Bot-2016-Second,Attouch-2018-Convergence,Attouch-2020-Convergence,Attouch-2020-Newton}, but there is significantly less work on the case of a closed-loop control system with inertia. For recent progress in this direction, see~\citet{Attouch-2020-Fast} and references therein.  
\end{remark}

\subsection{Algebraic equation}\label{subsec:AE}
We study the algebraic equation,
\begin{equation}\label{Eq:AE-main}
(\lambda(t))^p\|\nabla\Phi(x(t))\|^{p-1} = \theta \in (0, 1),  
\end{equation}
which links the feedback control $\lambda(\cdot)$ and the solution trajectory $x(\cdot)$ in the closed-loop control system. To streamline the presentation, we define a function $\varphi: [0, +\infty) \times \br^d \mapsto [0, +\infty)$ such that 
\begin{equation*}
\varphi(\lambda, x) = \lambda\|\nabla\Phi(x)\|^{\frac{p-1}{p}}, \quad \varphi(0, x) = 0. 
\end{equation*}
By definition, Eq.~\eqref{Eq:AE-main} is equivalent to $\varphi(\lambda(t), x(t)) = \theta^{1/p}$. Our first proposition presents a property of the mapping $\varphi(\cdot, x)$, for a fixed $x \in \br^d$ satisfying $\nabla \Phi(x) \neq 0$.  We have:
\begin{proposition}\label{Prop:AE-mapping-first}
Fixing $x \in \br^d$ with $\nabla \Phi(x) \neq 0$, the mapping $\varphi(\cdot, x)$ satisfies
\begin{enumerate}
\item $\varphi(\cdot, x)$ is linear, strictly increasing and $\varphi(0, x) = 0$. 
\item $\varphi(\lambda, x) \rightarrow +\infty$ as $\lambda \rightarrow +\infty$. 
\end{enumerate}
\end{proposition}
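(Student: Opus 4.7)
The plan is to reduce everything to the observation that, for a \emph{fixed} $x$, the map $\lambda \mapsto \varphi(\lambda, x)$ is simply multiplication by a positive constant. Concretely, set
\begin{equation*}
C(x) \;=\; \|\nabla \Phi(x)\|^{\frac{p-1}{p}},
\end{equation*}
so that $\varphi(\lambda, x) = C(x)\,\lambda$ for $\lambda \geq 0$. Since by hypothesis $\nabla \Phi(x) \neq 0$, we have $\|\nabla \Phi(x)\| > 0$, and as the exponent $(p-1)/p$ is a nonnegative real number (with $p \geq 1$ an integer), the quantity $C(x)$ is a strictly positive real. One minor case to flag: when $p = 1$ the exponent is $0$, so $C(x) = 1$; the argument below treats both $p = 1$ and $p \geq 2$ uniformly.

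For claim~(1), linearity of $\lambda \mapsto C(x)\lambda$ on $[0,+\infty)$ is immediate. The value at $\lambda = 0$ is $C(x)\cdot 0 = 0$, which matches the convention $\varphi(0,x)=0$ prescribed in the definition. Strict monotonicity follows from $C(x) > 0$: for $0 \leq \lambda_1 < \lambda_2$,
\begin{equation*}
\varphi(\lambda_2, x) - \varphi(\lambda_1, x) = C(x)(\lambda_2 - \lambda_1) > 0.
\end{equation*}
For claim~(2), the same positivity yields
\begin{equation*}
\lim_{\lambda \to +\infty} \varphi(\lambda, x) \;=\; C(x) \cdot \lim_{\lambda \to +\infty} \lambda \;=\; +\infty.
\end{equation*}

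There is essentially no obstacle here: once one sees that $\varphi(\cdot, x)$ is a positive scalar multiple of the identity on $[0,+\infty)$, the three properties are textbook consequences. The only thing worth being careful about is verifying that $C(x)$ is genuinely nonzero, which is precisely the reason the hypothesis $\nabla \Phi(x) \neq 0$ is imposed; without it, $\varphi(\cdot, x)$ would collapse to the zero map and both conclusions would fail. This proposition is then a setup lemma used later to guarantee, via the intermediate value theorem applied to the continuous strictly increasing surjection $\varphi(\cdot, x):[0,+\infty) \to [0,+\infty)$, a unique $\lambda \geq 0$ solving $\varphi(\lambda,x) = \theta^{1/p}$ for each admissible $x$.
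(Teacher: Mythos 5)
Your proof is correct and takes essentially the same approach as the paper: both observe that, with $x$ fixed and $\nabla\Phi(x)\neq 0$, the map $\lambda\mapsto\varphi(\lambda,x)$ is multiplication by the positive constant $\|\nabla\Phi(x)\|^{(p-1)/p}$, from which linearity, the value at $0$, strict monotonicity, and divergence as $\lambda\to+\infty$ all follow immediately. Your write-up is a bit more explicit (naming $C(x)$ and flagging the $p=1$ case), but it is the same argument.
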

\begin{proof}
By the definition of $\varphi$, the mapping $\varphi(\cdot, x)$ is linear and $\varphi(0, x) = 0$. Since $\nabla \Phi(x) \neq 0$, we have $\|\nabla \Phi(x)\| > 0$ and $\varphi(\cdot, x)$ is thus strictly increasing. Since $\varphi(\cdot, x)$ is linear and strictly increasing, $\varphi(\lambda, x) \rightarrow +\infty$ as $\lambda \rightarrow +\infty$. 
\end{proof}

In view of Proposition~\ref{Prop:AE-mapping-first}, for any fixed point $x$ with $\nabla \Phi(x) \neq 0$, there exists a unique $\lambda > 0$ such that $\varphi(\lambda, x) = \theta^{1/p}$ for some $\theta \in (0, 1)$. We accordingly define $\Omega \subseteq \br^d$ and the mapping $\Lambda_\theta: \Omega \mapsto (0, \infty)$ as follows:
\begin{equation}\label{Def:AE-mapping}
\Omega = \{x \in \br^d \mid \|\nabla\Phi(x)\| \neq 0\}, \quad \Lambda_\theta(x) = \theta^{\frac{1}{p}}\|\nabla\Phi(x)\|^{-\frac{p-1}{p}}. 
\end{equation}
We now provide several basic results concerning $\Omega$ and $\Lambda_\theta(\cdot)$ which are crucial to the proof of existence and uniqueness presented in the next subsection.
\begin{proposition}\label{Prop:AE-mapping-second}
The set $\Omega$ is open. 
\end{proposition}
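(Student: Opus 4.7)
The plan is a one-line topological argument. Since $\Phi$ is twice continuously differentiable by the standing assumption, the gradient map $\nabla\Phi : \br^d \to \br^d$ is continuous, and composing with the norm gives that the scalar map $x \mapsto \|\nabla\Phi(x)\|$ is continuous from $\br^d$ to $\br$. The set $\Omega$ is exactly the preimage of the open set $\br \setminus \{0\} = (0, +\infty)$ under this continuous map, so it is open.

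For concreteness, I would write the proof in the equivalent "local" form that is most useful for subsequent arguments: fix any $x_0 \in \Omega$, set $\eta = \|\nabla\Phi(x_0)\| > 0$, and invoke continuity of $\nabla\Phi$ at $x_0$ to produce $\delta > 0$ such that $\|\nabla\Phi(x) - \nabla\Phi(x_0)\| \leq \eta/2$ whenever $x \in \BB_\delta(x_0)$. The reverse triangle inequality then yields $\|\nabla\Phi(x)\| \geq \eta/2 > 0$ on this ball, so $\BB_\delta(x_0) \subseteq \Omega$, establishing openness.

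There is no real obstacle here; the result is essentially immediate from continuity of $\nabla\Phi$. The only reason the statement is worth isolating is that the explicit lower bound $\|\nabla\Phi(x)\| \geq \eta/2$ on a ball $\BB_\delta(x_0)$ is exactly what one needs in the following subsection to show that $\Lambda_\theta$ is well defined and (locally) Lipschitz on $\Omega$, which in turn feeds the Banach fixed-point argument for local existence and uniqueness. I would therefore phrase the conclusion in that quantitative form so it can be reused verbatim in the proof of the corresponding regularity proposition for $\Lambda_\theta$.
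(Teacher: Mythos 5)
Your proof is correct and takes essentially the same route as the paper: both argue locally that continuity of $\nabla\Phi$ plus the reverse triangle inequality keeps $\|\nabla\Phi\|$ bounded below by $\|\nabla\Phi(x_0)\|/2$ on a small ball, hence the ball lies in $\Omega$. The only cosmetic difference is that the paper invokes the local Lipschitz continuity of $\nabla\Phi$ (available since $\Phi$ is $C^2$) where plain continuity, as you use, already suffices; your preimage-of-an-open-set phrasing is an equivalent one-line packaging of the same fact.
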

\begin{proof}
Given $x \in \Omega$, it suffices to show that $\BB_\delta(x) \subseteq \Omega$ for some $\delta>0$. Since $\Phi$ is twice continuously differentiable, $\nabla\Phi$ is locally Lipschitz; that is, there exists $\tilde{\delta} > 0$ and $L > 0$ such that 
\begin{equation*}
\|\nabla\Phi(z) - \nabla\Phi(x)\| \leq L\|z - x\|, \quad \forall z \in \BB_{\delta_1}(x). 
\end{equation*} 
Combining this inequality with the triangle inequality, we have
\begin{equation*}
\|\nabla\Phi(z)\| = \|\nabla\Phi(x)\| - \|\nabla\Phi(z) - \nabla\Phi(x)\| \geq \|\nabla\Phi(x)\| - L\|z - x\|. 
\end{equation*}
Let $\delta = \min\{\tilde{\delta}, \frac{\|\nabla\Phi(x)\|}{2L}\}$. Then, for any $z \in \BB_\delta(x)$, we have
\begin{equation*}
\|\nabla\Phi(z)\| \geq \frac{\|\nabla\Phi(x)\|}{2} > 0 \ \Longrightarrow \    z \in \Omega. 
\end{equation*} 
This completes the proof. 
\end{proof}
\begin{proposition}\label{Prop:AE-mapping-Key}
Fixing $\theta \in (0, 1)$, the mappings $\Lambda_\theta(\cdot)$ and $\sqrt{\Lambda_\theta(\cdot)}$ are continuous and locally Lipschitz over $\Omega$. 
\end{proposition}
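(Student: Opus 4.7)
The plan is to show that $\Lambda_\theta$ factors as a composition of locally Lipschitz maps on $\Omega$, and then invoke the standard fact that compositions of locally Lipschitz maps are locally Lipschitz. Since $\sqrt{\Lambda_\theta(x)} = \theta^{1/(2p)}\|\nabla\Phi(x)\|^{-(p-1)/(2p)}$ has exactly the same structure as $\Lambda_\theta$ with the exponent $(p-1)/p$ replaced by $(p-1)/(2p)$, the proof for $\sqrt{\Lambda_\theta}$ will be a word-for-word repetition.

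First I would fix $x_0 \in \Omega$ and produce a neighborhood $\BB_\delta(x_0) \subseteq \Omega$ on which two things hold simultaneously: (i) $\nabla\Phi$ is $L$-Lipschitz for some $L > 0$, which follows from $\Phi \in C^2$; and (ii) $\|\nabla\Phi(z)\| \geq \tfrac{1}{2}\|\nabla\Phi(x_0)\| =: m > 0$ for all $z \in \BB_\delta(x_0)$. Step (ii) is exactly the construction already carried out in the proof of Proposition~\ref{Prop:AE-mapping-second}, with $\delta = \min\{\tilde{\delta}, \|\nabla\Phi(x_0)\|/(2L)\}$. By the reverse triangle inequality applied to (i)-(ii), we also obtain an upper bound $\|\nabla\Phi(z)\| \leq M := \|\nabla\Phi(x_0)\| + L\delta$ on the same neighborhood, so that $\|\nabla\Phi(\cdot)\|$ takes values in the compact interval $[m, M] \subset (0, +\infty)$.

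Next I would note that the reverse triangle inequality combined with $L$-Lipschitzness of $\nabla\Phi$ gives
\begin{equation*}
\bigl|\|\nabla\Phi(z_1)\| - \|\nabla\Phi(z_2)\|\bigr| \leq \|\nabla\Phi(z_1) - \nabla\Phi(z_2)\| \leq L\|z_1 - z_2\|, \quad \forall z_1, z_2 \in \BB_\delta(x_0),
\end{equation*}
so $z \mapsto \|\nabla\Phi(z)\|$ is $L$-Lipschitz on $\BB_\delta(x_0)$. The real-valued function $\psi(t) = t^{-(p-1)/p}$ is $C^1$ on $(0, +\infty)$ with derivative $\psi'(t) = -\tfrac{p-1}{p}\, t^{-(2p-1)/p}$, which is bounded in absolute value by a constant $K = K(m, p)$ on $[m, M]$; hence $\psi$ is $K$-Lipschitz on $[m, M]$ by the mean value theorem. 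Composing and multiplying by the constant $\theta^{1/p}$ gives
\begin{equation*}
|\Lambda_\theta(z_1) - \Lambda_\theta(z_2)| \leq \theta^{1/p} K \bigl|\|\nabla\Phi(z_1)\| - \|\nabla\Phi(z_2)\|\bigr| \leq \theta^{1/p} K L\, \|z_1 - z_2\|
\end{equation*}
for all $z_1, z_2 \in \BB_\delta(x_0)$, which yields the local Lipschitz property (and hence continuity) of $\Lambda_\theta$ at $x_0$. The identical argument applied to the exponent $-(p-1)/(2p)$ handles $\sqrt{\Lambda_\theta}$; the special case $p = 1$ is trivial since $\Lambda_\theta \equiv \theta$ is constant.

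There is no real obstacle here beyond bookkeeping: the one point that needs care is ensuring $\|\nabla\Phi(\cdot)\|$ is bounded away from zero on the chosen neighborhood, so that the singularity of $\psi(t) = t^{-(p-1)/p}$ at $t = 0$ never interferes. This is exactly why the construction in Proposition~\ref{Prop:AE-mapping-second} — in particular the choice $\delta \leq \|\nabla\Phi(x_0)\|/(2L)$ — is reused verbatim.
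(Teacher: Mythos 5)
Your argument is correct and follows essentially the same route as the paper's proof: express $\Lambda_\theta$ as a composition of the locally Lipschitz maps $\nabla\Phi$, $\|\cdot\|$, and $t \mapsto t^{-(p-1)/p}$ (locally Lipschitz away from $t = 0$), using the openness of $\Omega$ to guarantee a neighborhood on which $\|\nabla\Phi(\cdot)\|$ is bounded away from zero. The only difference is presentational: you make the mean value theorem step and the choice of $[m,M]$ explicit where the paper simply asserts that $x^{-(p-1)/p}$ is locally Lipschitz at $x>0$.
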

\begin{proof}
By the definition of $\Lambda_\theta(\cdot)$, it suffices to show that $\Lambda_\theta(\cdot)$ is continuous and locally Lipschitz over $\Omega$ since the same argument works for $\sqrt{\Lambda_\theta(\cdot)}$. 

First, we prove the continuity of $\Lambda_\theta(\cdot)$ over $\Omega$. Since $\|\nabla\Phi(x)\| > 0$ for any $x \in \Omega$, the function $\|\nabla\Phi(\cdot)\|^{-\frac{p-1}{p}}$ is continuous over $\Omega$. By the definition of $\Lambda_\theta(\cdot)$, we achieve the desired result.  

Second, we prove that $\Lambda_\theta(\cdot)$ is locally Lipschitz over $\Omega$. Since $\Phi$ is twice continuously differentiable, $\nabla\Phi$ is locally Lipschitz. For $p = 1$, $\Lambda_\theta(\cdot)$ is a constant everywhere and thus locally Lipschitz over $\Omega$. For $p \geq 2$, the function $x^{-\frac{p-1}{p}}$ is locally Lipschitz at any point $x > 0$. Also, by Proposition~\ref{Prop:AE-mapping-second}, $\Omega$ is an open set. Putting these pieces together yields that $\|\nabla\Phi(\cdot)\|^{-\frac{p-1}{p}}$ is locally Lipschitz over $\Omega$; that is, there exist $\delta > 0$ and $L > 0$ such that
\begin{equation*}
|\|\nabla \Phi(x')\|^{-\frac{p-1}{p}} - \|\nabla \Phi(x'')\|^{-\frac{p-1}{p}}| \leq L\|x' - x''\|, \quad \forall x', x'' \in \BB_\delta(x),
\end{equation*}
which implies that  
\begin{equation*}
|\Lambda_\theta(x') - \Lambda_\theta(x'')| \leq \theta^{\frac{1}{p}}L\|x' - x''\|, \quad \forall x', x'' \in \BB_\delta(x). 
\end{equation*}
This completes the proof. 
\end{proof}

\subsection{Existence and uniqueness of a local solution}\label{subsec:LEU}
We prove the existence and uniqueness of a local solution of the closed-loop control system in Eq.~\eqref{Sys:general} and Eq.~\eqref{Sys:choice-feedback} by appeal to the Banach fixed-point theorem. Using the results in Section~\ref{subsec:FO} (see Eq.~\eqref{Prelim:FO-second}), our system can be equivalently written as follows:
\begin{equation*}
\left\{\begin{array}{ll}
& \dot{x}(t) + \frac{\dot{a}(t)}{a(t)}(x(t) + \int_0^t \dot{a}(s)\nabla\Phi(x(s)) ds - v_0) + \frac{(\dot{a}(t))^2}{a(t)}\nabla\Phi(x(t)) = 0 \\
& a(t) = \frac{1}{4}(\int_0^t \sqrt{\lambda(s)} ds + c)^2 \\
& (\lambda(t))^p\|\nabla\Phi(x(t))\|^{p-1} = \theta \\ 
& x(0) = x_0. 
\end{array}\right.  
\end{equation*}
Using the mapping $\Lambda_\theta: \Omega \mapsto (0, \infty)$ (see Eq.~\eqref{Def:AE-mapping}), this system can be further formulated as an autonomous system. Indeed, we have
\begin{equation*}
\lambda(t) = \Lambda_\theta(x(t)) \Longleftrightarrow [\lambda(t)]^p\|\nabla\Phi(x(t))\|^{p-1} = \theta,   
\end{equation*}
which implies that 
\begin{equation*}
a(t) = \frac{1}{4}\left(\int_0^t \sqrt{\Lambda_\theta(x(s))} ds + c\right)^2, \quad \dot{a}(t) = \frac{1}{2}\sqrt{\Lambda_\theta(x(t))}\left(\int_0^t \sqrt{\Lambda_\theta(x(s))} \ ds + c\right).  
\end{equation*}
Putting these pieces together, we arrive at an autonomous system in the following compact form: 
\begin{equation}\label{Sys:autonomous}
\dot{x}(t) = F(t, x(t)), \quad x(0) = x_0 \in \Omega,
\end{equation}
where the vector field $F: [0, +\infty) \times \Omega \mapsto \br^d$ is given by
\begin{equation}\label{Sys:vector-field}
\begin{array}{lcl}
F(t, x(t)) & = & - \frac{\sqrt{\Lambda_\theta(x(t))}(2x(t) + \int_0^t \sqrt{\Lambda_\theta(x(s))}(\int_0^s \sqrt{\Lambda_\theta(x(w))} dw + c)\nabla \Phi(x(s))ds - v_0)}{\int_0^t \sqrt{\Lambda_\theta(x(s))} \ ds + c} \\ 
& & - \Lambda_\theta(x(t))\nabla\Phi(x(t)). 
\end{array}
\end{equation}
A common method for proving the existence and uniqueness of a local solution is via appeal to the Cauchy-Lipschitz theorem~\citep[Theorem I.3.1]{Coddington-1955-Theory}.  This theorem, however, requires that $F(t, x)$ be continuous in $t$ and Lipschitz in $x$, and this is not immediate in our case due to the appearance of $\int_0^t \sqrt{\Lambda_\theta(x(s))} ds$. We instead recall that the proof of the Cauchy-Lipschitz theorem is generally based on the Banach fixed-point theorem~\citep{Granas-2013-Fixed}, and we avail ourselves directly of the latter theorem.  In particular, we construct Picard iterates $\psi_k$ whose limit is a fixed point of a contraction $T$. We have the following theorem. 
\begin{theorem}\label{Theorem:Local-Existence-Uniquess} 
There exists $t_0 > 0$ such that the autonomous system in Eq.~\eqref{Sys:autonomous} and Eq.~\eqref{Sys:vector-field} has a unique solution $x: [0, t_0] \mapsto \br^d$.
\end{theorem}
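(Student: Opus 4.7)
The plan is to follow the hint in the excerpt and realize the local solution as the unique fixed point of an integral operator via the Banach contraction principle, working directly with the integral form $x(t) = x_0 + \int_0^t F(s, x)\,ds$ of Eq.~(2.7)-(2.8). The classical Cauchy-Lipschitz theorem does not apply because the vector field $F(s, x)$ involves the past history of the trajectory through the nested integrals $\int_0^s \sqrt{\Lambda_\theta(x(w))}\,dw$ and $\int_0^s \sqrt{\Lambda_\theta(x(w))}(\int_0^w \sqrt{\Lambda_\theta(x(u))}\,du + c)\nabla\Phi(x(w))\,dw$, so we have to treat the map $T : \psi \mapsto x_0 + \int_0^\cdot F(s, \psi)\,ds$ as an operator on a function space. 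Since $x_0 \in \Omega$ and $\Omega$ is open (Proposition~\ref{Prop:AE-mapping-second}), I fix $r > 0$ with $\overline{\BB_r(x_0)} \subset \Omega$; on this compact set, Proposition~\ref{Prop:AE-mapping-Key} together with the $C^2$-regularity of $\Phi$ yields uniform upper bounds on $\Lambda_\theta$, $\sqrt{\Lambda_\theta}$ and $\|\nabla\Phi\|$, a uniform lower bound $\Lambda_\theta \geq m > 0$, and finite Lipschitz constants for all three maps.

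For $t_0 > 0$ to be chosen, let $X$ be the set of continuous $\psi:[0,t_0]\to \overline{\BB_r(x_0)}$ with the sup-metric, which is a closed subset of a Banach space, hence a complete metric space. To show $T(X)\subseteq X$, observe that the denominator $\int_0^s \sqrt{\Lambda_\theta(\psi(w))}\,dw + c \geq c > 0$ uniformly in $s \in [0,t_0]$ and $\psi \in X$, so combining this with the uniform upper bounds and the $O(t_0)$ estimates for the inner integrals yields $\|F(s, \psi)\|\leq K$ for a constant $K$ depending only on $r$, $c$, $\|x_0\|$, $\|v_0\|$, the above sup-bounds, and $t_0$; integrating gives $\|(T\psi)(t)-x_0\|\leq K t_0$, so $T\psi\in X$ once $K t_0 \leq r$. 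For the contraction property, I would expand $F(s,\psi_1) - F(s,\psi_2)$ by telescoping each composite such as $\sqrt{\Lambda_\theta(\psi)}\nabla\Phi(\psi)$ into pieces involving one uniform bound and one Lipschitz difference; the nested integrals contribute an extra factor $s \leq t_0$, and the quotient by a denominator bounded below by $c$ is harmless. This produces an estimate of the form
\begin{equation*}
\|T\psi_1 - T\psi_2\|_\infty \leq C\, t_0\, \|\psi_1 - \psi_2\|_\infty,
\end{equation*}
and choosing $t_0$ so small that $K t_0 \leq r$ and $C t_0 < 1$ simultaneously makes $T$ a strict contraction of $X$ into itself. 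Banach's theorem then gives a unique fixed point $x \in X$, which is automatically $C^1$ on $[0,t_0]$ and solves the autonomous system with $x(0) = x_0$.

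The main obstacle is the bookkeeping in the contraction estimate: $F$ contains a quotient whose numerator is itself an iterated integral of a product of three $\psi$-dependent factors, so the pointwise difference $F(s,\psi_1)-F(s,\psi_2)$ must be decomposed into a bounded number of terms, each handled by combining one sup-bound with one Lipschitz bound from Proposition~\ref{Prop:AE-mapping-Key}. The crucial structural ingredient is the strict positivity of the denominator, which comes entirely from the additive constant $c > 0$ in the definition of $a(t)$; without it, the Lipschitz estimate of $F$ would blow up near $t = 0$ and the contraction argument would fail. Once this positivity is exploited, every term in the resulting decomposition carries either an explicit factor of $t_0$ from the outer integration defining $T$ or a factor $s \leq t_0$ from the inner integrals, and hence the contraction constant vanishes as $t_0 \to 0$, completing the argument.
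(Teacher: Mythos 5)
Your proposal is correct and takes essentially the same approach as the paper's own proof: both work with the integral operator $T\psi = x_0 + \int_0^{\cdot} F(s,\psi)\,ds$ on the complete metric space of continuous maps from $[0,t_0]$ into a closed ball contained in $\Omega$, use Propositions~\ref{Prop:AE-mapping-second} and~\ref{Prop:AE-mapping-Key} for uniform bounds and Lipschitz constants, exploit the lower bound $\int_0^s \sqrt{\Lambda_\theta}\,dw + c \geq c$ from the additive constant, and establish the contraction estimate $\|T\psi_1 - T\psi_2\|_\infty \leq C\,t_0\,\|\psi_1-\psi_2\|_\infty$ by telescoping products into one sup-bound times one Lipschitz difference per factor. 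The only stray remark is the uniform lower bound $\Lambda_\theta \geq m > 0$, which holds but is never needed in either argument; everything else matches the paper's proof, including the final choice of $t_0$ small enough to make $T$ a self-map and a strict contraction.
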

\begin{proof}
By Proposition~\ref{Prop:AE-mapping-second} and the initial condition $x_0 \in \Omega$, there exists $\delta > 0$ such that $\BB_{\delta}(x_0) \subseteq \Omega$. Note that $\Phi$ is twice continuously differentiable. By the definition of $\Lambda_\theta$, we obtain that $\Lambda_\theta(z)$ and $\nabla\Phi(z)$ are both bounded for any $z \in \BB_{\delta}(x_0)$. Putting these pieces together shows that there exists $M > 0$ such that, for any continuous function $x: [0, 1] \mapsto \BB_{\delta}(x_0)$, we have
\begin{equation}\label{inequality:LEU-first}
\|F(t, x(t))\| \leq M, \quad \forall t \in [0, 1].  
\end{equation}
The set of such functions is not empty since a constant function $x = x_0$ is one element. Letting $t_1 = \min\{1, \frac{\delta}{M}\}$, we define $\XCal$ as the space of all continuous functions $x$ on $[0, t_0]$ for some $t_0 < t_1$ whose graph is contained entirely inside the rectangle $[0, t_0] \times \BB_{\delta}(x_0)$. For any $x \in \XCal$, we define 
\begin{equation*}
z(t) = Tx = x_0 + \int_0^t F(s, x(s)) ds. 
\end{equation*}
Note that $z(\cdot)$ is well defined and continuous on $[0, t_0]$. Indeed, $x \in \XCal$ implies that $x(t) \in \BB_{\delta}(x_0) \subseteq \Omega$ for $\forall t \in [0, t_0]$. Thus, the integral of $F(s, x(s))$ is well defined and continuous. Second, the graph of $z(t)$ lies entirely inside the rectangle $[0, t_0] \times \BB_{\delta}(x_0)$. Indeed, since $t \leq t_0 < t_1 = \min\{1, \frac{\delta}{M}\}$, we have
\begin{equation*}
\|z(t) - x_0\| = \left\|\int_0^t F(s, x(s)) ds\right\| \overset{\textnormal{Eq.~\eqref{inequality:LEU-first}}}{\leq} Mt \leq Mt_0 \leq Mt_1 \leq \delta. 
\end{equation*}
Putting these pieces together yields that $T$ maps $\XCal$ to itself. By the fundamental theorem of calculus, we have $\dot{z}(t) = F(t, x(t))$. By a standard argument from ordinary differential equation theory, $\dot{x}(t) = F(t, x(t))$ and $x(0)=x_0$ if and only if $x$ is a fixed point of $T$. Thus, it suffices to show the existence and uniqueness of a fixed point of $T$. 

We consider the Picard iterates $\{\psi_k\}_{k \geq 0}$ with $\psi_0(t) = x_0$ for $\forall t \in [0, t_0]$ and $\psi_{k+1} = T\psi_k$ for all $k \geq 0$. By the Banach fixed-point theorem~\citep{Granas-2013-Fixed}, the Picard iterates converge to a unique fixed point of $T$ if $\XCal$ is an nonempty and complete metric space and $T$ is a contraction from $\XCal$ to $\XCal$.

\textit{First, we show that $\XCal$ is an nonempty and complete metric space.} Indeed, we define $d(x, x') = \max_{t \in [0, t_0]} \|x(t) - x'(t)\|$. It is easy to verify that $d$ is a metric and $(\XCal, d)$ is a complete metric space (see~\citet{Sutherland-2009-Introduction} for the details). In addition, $\XCal$ is nonempty since the constant function $x = x_0$ is one element.    

\textit{It remains to prove that $T$ is a contraction for some $t_0 < t_1$.} Indeed, $\Lambda_\theta(z)$ and $\nabla\Phi(z)$ are bounded for $\forall z \in \BB_\delta(x_0)$; that is, there exists $M_1>0$ such that $\max\{\Lambda_\theta(z), \|\nabla\Phi(z)\|\} \leq M_1$ for $\forall z \in \BB_\delta(x_0)$. By Proposition~\ref{Prop:AE-mapping-Key}, $\Lambda_\theta$ and $\sqrt{\Lambda_\theta}$ are continuous and locally Lipschitz over $\Omega$. Since $\BB_{\delta}(x_0) \subseteq \Omega$ is bounded, there exists $L_1>0$ such that, for any $x', x'' \in \BB_\delta(x_0)$, we have
\begin{equation}\label{inequality:LEU-second}
\max\{|\Lambda_\theta(x') - \Lambda_\theta(x'')|, |\sqrt{\Lambda_\theta}(x') - \sqrt{\Lambda_\theta}(x'')|\} \leq L_1\|x' - x''\|. 
\end{equation}
Note that $\Phi$ is twice continuously differentiable. Thus, there exists $L_2>0$ such that $\|\nabla\Phi(x') - \nabla\Phi(x'')\| \leq L_2\|x' - x''\|$ for $\forall x', x'' \in \BB_\delta(x_0)$. In addition, for any $t \in [0, t_0]$, we have $\|x(t)\| \leq \|x_0\| + \delta = M_2$. 

We now proceed to the main proof. By the triangle inequality, we have
\begin{align*}
\lefteqn{\|Tx'(t) - Tx''(t)\| \leq \underbrace{\int_0^t \|\Lambda_\theta(x'(s))\nabla\Phi(x'(s)) - \Lambda_\theta(x''(s))\nabla\Phi(x''(s))\| ds}_{\textbf{I}}} \\
& + \int_0^t \left\|\frac{\sqrt{\Lambda_\theta(x'(s))}}{\int_0^s \sqrt{\Lambda_\theta(x'(w))} dw + c}\left(\int_0^s \left(\sqrt{\Lambda_\theta(x'(w))}\left(\int_0^w \sqrt{\Lambda_\theta(x'(v))} \ dv + c\right)\right) \nabla \Phi(x'(w))dw\right) \right. \\
& \underbrace{\left. - \frac{\sqrt{\Lambda_\theta(x''(s))}}{\int_0^s \sqrt{\Lambda_\theta(x''(w))} dw + c}\left(\int_0^s \left(\sqrt{\Lambda_\theta(x''(w))}\left(\int_0^w \sqrt{\Lambda_\theta(x''(v))} \ dv + c\right)\right) \nabla \Phi(x''(w))dw\right)\right\| ds}_{\textbf{II}} \\
& + \underbrace{\int_0^t \left\|\frac{2\sqrt{\Lambda_\theta(x'(s))}}{\int_0^s \sqrt{\Lambda_\theta(x'(w))}dw + c}(x'(s) - v_0) - \frac{2\sqrt{\Lambda_\theta(x''(s))}}{\int_0^s \sqrt{\Lambda_\theta(x''(w))}dw + c}(x''(s) - v_0)\right\| ds}_{\textbf{III}}. 
\end{align*}
The key inequality for the subsequent analysis is as follows:
\begin{equation}\label{inequality:LEU-third}
\|a_1 b_1 - a_2 b_2\| \leq \|a_1\|\|b_1 - b_2\| + \|b_2\|\|a_1 - a_2\|. 
\end{equation}
First, by combining Eq.~\eqref{inequality:LEU-third} with $\max\{\Lambda_\theta(x(t)), \|\nabla\Phi(x(t))\|\} \leq M_1$, $\|\nabla\Phi(x') - \nabla\Phi(x'')\| \leq L_2\|x' - x''\|$ and Eq.~\eqref{inequality:LEU-second}, we obtain: 
\begin{equation*}
\textbf{I} \leq M_1(L_1 + L_2)t_0 d(x', x''). 
\end{equation*}
Second, we combine Eq.~\eqref{inequality:LEU-third} with $\sqrt{\Lambda_\theta(x(t))} \leq \sqrt{M_1}$, Eq.~\eqref{inequality:LEU-second} and $0 < s \leq t_0 < t_1 < 1$ to obtain:  
\begin{equation*}
\left\|\frac{\sqrt{\Lambda_\theta(x'(s))}}{\int_0^s \sqrt{\Lambda_\theta(x'(w))}dw + c} - \frac{\sqrt{\Lambda_\theta(x''(s))}}{\int_0^s \sqrt{\Lambda_\theta(x''(w))}dw + c}\right\| \leq \left(\frac{1}{c}+\frac{2\sqrt{M_1}}{c^2}\right)L_1d(x', x''). 
\end{equation*}
We also obtain by combining Eq.~\eqref{inequality:LEU-third} with $\max\{\Lambda_\theta(x(t)), \|\nabla\Phi(x(t))\|\} \leq M_1$, $\|\nabla\Phi(x') - \nabla\Phi(x'')\| \leq L_2\|x' - x''\|$, Eq.~\eqref{inequality:LEU-second} and $0 < w \leq s \leq t_0 < t_1 < 1$ that 
\begin{align*}
\lefteqn{\left\|\int_0^s \left(\sqrt{\Lambda_\theta(x'(w))}\left(\int_0^w \sqrt{\Lambda_\theta(x'(v))} \ dv + c\right)\right) \nabla \Phi(x'(w))dw \right.} \\
& \left. \ \ \ - \int_0^s \left(\sqrt{\Lambda_\theta(x''(w))}\left(\int_0^w \sqrt{\Lambda_\theta(x''(v))} \ dv + c\right)\right) \nabla \Phi(x''(w))dw\right\| \\
& \leq (M_1L_2 + c\sqrt{M_1}L_2 + 2(M_1)^{3/2}L_1 + cM_1L_1)d(x', x''). 
\end{align*}
In addition, by using $\max\{\Lambda_\theta(x(t)), \|\nabla\Phi(x(t))\|\} \leq M_1$ and $0 < w \leq s \leq t_0 < t_1 < 1$, we have
\begin{align*}
& \left\|\frac{\sqrt{\Lambda_\theta(x'(s))}}{\int_0^s \sqrt{\Lambda_\theta(x'(w))} \ dw + c}\right\| \leq \frac{\sqrt{M_1}}{c}, \\
& \left\|\int_0^s \left(\sqrt{\Lambda_\theta(x''(w))}\left(\int_0^w \sqrt{\Lambda_\theta(x''(v))} \ dv + c\right)\right) \nabla \Phi(x''(w))dw\right\| \leq (M_1)^2 + c(M_1)^{3/2}. 
\end{align*}
Putting these pieces together yields that  
\begin{equation*}
\textbf{II} \leq \left(\frac{2(M_1)^{5/2}L_1}{c^2} + \frac{(M_1)^{3/2}L_2 + 5(M_1)^2L_1}{c} + M_1L_2 + 2(M_1)^{3/2}L_1\right)t_0 d(x', x'').
\end{equation*}
Finally, by a similar argument, we have
\begin{equation*}
\textbf{III} \leq \left(\frac{2\sqrt{M_1} + 2(M_2+\|v_0\|)L_1}{c} + \frac{4\sqrt{M_1}(M_2+\|v_0\|)L_1}{c^2}\right)t_0d(x', x''). 
\end{equation*}
Combining the upper bounds for $\textbf{I}$, $\textbf{II}$ and $\textbf{III}$, we have 
\begin{equation*}
d(Tx', Tx'') = \max_{t \in [0, t_0]} \|Tx'(t) - Tx''(t)\| \leq \bar{M}t_0d(x', x''), 
\end{equation*}
where $\bar{M}$ is a constant that does not depend on $t_0$ (in fact it depends on $c$, $x_0$, $\delta$, $\Phi(\cdot)$ and $\Lambda_\theta(\cdot)$) and is defined as follows: 
\begin{eqnarray*}
\bar{M} & = & \frac{2((M_1)^2+2M_2+2\|v_0\|)\sqrt{M_1}L_1}{c^2} + \frac{2\sqrt{M_1} + (2M_2+2\|v_0\|+5(M_1)^2)L_1+(M_1)^{3/2}L_2}{c} \\
& & + 2M_1L_2 + (M_1+2(M_1)^{3/2})L_1. 
\end{eqnarray*}
Therefore, the mapping $T$ is a contraction if $t_0 \in (0, t_1]$ satisfies $t_0 \leq \frac{1}{2\bar{M}}$. This completes the proof. 
\end{proof}

\subsection{Discussion}\label{subsec:Discussion}
We compare the closed-loop control system in Eq.~\eqref{Sys:general} and Eq.~\eqref{Sys:choice-feedback} with four main classes of systems in the literature.  

\paragraph{Hessian-driven damping.} The formal introduction of Hessian-driven damping in optimization dates to~\citet{Alvarez-2002-Second}, with many subsequent developments; see, e.g.,~\citet{Attouch-2016-Fast}. The system studied in this literature takes the following form: 
\begin{equation*}
\ddot{x}(t) + \frac{\alpha}{t}\dot{x}(t) + \beta\nabla^2\Phi(x(t))\dot{x}(t) + \nabla\Phi(x(t)) = 0. 
\end{equation*}
In a Hilbert space setting and when $\alpha>3$, the literature has established the weak convergence of any solution trajectory to a global minimizer of $\Phi$ and the convergence rate of $o(1/t^2)$ in terms of objective function gap.
    
Recall also that~\citet{Shi-2018-Understanding} interpreted  Nesterov acceleration as the discretization of a high-resolution differential equation:
\begin{equation*}
\ddot{x}(t) + \frac{3}{t}\dot{x}(t) + \sqrt{s}\nabla^2\Phi(x(t))\dot{x}(t) + \left(1+\frac{3\sqrt{s}}{2t}\right)\nabla\Phi(x(t)) = 0,  
\end{equation*}  
and showed that this equation distinguishes between Polyak's heavy-ball method and Nesterov's accelerated gradient method. In the special case in which $c = 0$ and $p = 1$, our system in Eq.~\eqref{Sys:general} and Eq.~\eqref{Sys:choice-feedback} becomes 
\begin{equation}\label{Sys:specific-p-1}
\ddot{x}(t) + \frac{3}{t}\dot{x}(t) + \theta\nabla^2\Phi(x(t))\dot{x}(t) + \left(\theta + \frac{\theta}{t}\right)\nabla\Phi(x(t)) = 0. 
\end{equation}  
which also belongs to the class of high-resolution differential equations.  Moreover, for $c = 0$ and $p = 1$, our system can be studied within the recently-proposed framework of~\citet{Attouch-2020-Fast,Attouch-2021-Fast}; indeed, in this case $(\alpha, \beta, b)$ in~\citet[Theorem~2.1]{Attouch-2021-Fast} has an analytic form.  However, the choice of $(\alpha, \beta, b)$ in our general setting in Eq.~\eqref{Sys:choice-feedback}, for $p \geq 2$, does not have an analytic form and it is  difficult to verify whether $(\alpha, \beta, b)$ in this case satisfies their condition.

\paragraph{Newton and Levenberg-Marquardt regularized systems.} The precursor of this perspective was developed by~\citet{Alvarez-1998-Dynamical} in a variational characterization of general regularization algorithms. By constructing the regularization of the potential function $\Phi(\cdot, \epsilon)$ satisfying $\Phi(\cdot, \epsilon) \rightarrow \Phi$ as $\epsilon \rightarrow 0$, they studied the following system:
\begin{equation*}
\nabla^2\Phi(x(t), \epsilon(t))\dot{x}(t) + \dot{\epsilon}(t)\frac{\partial^2\Phi}{\partial\epsilon \partial x}(x(t), \epsilon(t)) + \nabla\Phi(x(t), \epsilon(t)) = 0.
\end{equation*}
Subsequently,~\citet{Attouch-2001-Second} and~\citet{Attouch-2011-Continuous} studied Newton dissipative and Levenberg-Marquardt regularized systems:
\begin{align*}
\textbf{(Newton)} \qquad & \ddot{x}(t) + \nabla^2\Phi(x(t))\dot{x}(t) + \nabla\Phi(x(t)) = 0. \\
\textbf{(Levenberg-Marquardt)} \qquad & \lambda(t)\dot{x}(t) + \nabla^2\Phi(x(t))\dot{x}(t) + \nabla\Phi(x(t)) = 0. 
\end{align*}
These systems have been shown to be well defined and stable with robust asymptotic behavior~\citep{Attouch-2011-Continuous,Attouch-2013-Global,Abbas-2014-Newton}, further motivating the study of the following inertial gradient system with constant damping and Hessian-driven damping~\citep{Alvarez-2002-Second}:      
\begin{equation*}
\ddot{x}(t) + \alpha\dot{x}(t) + \beta\nabla^2\Phi(x(t))\dot{x}(t) + \nabla\Phi(x(t)) = 0.  
\end{equation*}
This system attains strong asymptotic stabilization and fast convergence properties~\citep{Alvarez-2002-Second,Attouch-2012-Second} and can be extended to solve the monotone inclusion problems with theoretical guarantee~\citep{Attouch-2011-Continuous,Mainge-2013-First,Attouch-2013-Global,Abbas-2014-Newton,Attouch-2016-Dynamic,Attouch-2020-Newton,Attouch-2020-Continuous}. However, all of these systems are aimed at interpreting standard and regularized Newton algorithms and fail to model optimal acceleration even for the second-order algorithms in~\citet{Monteiro-2013-Accelerated}.  

Recently,~\citet{Attouch-2016-Dynamic} proposed a proximal Newton algorithm for solving monotone inclusions, which is motivated by a closed-loop control system without inertia. This algorithm attains a suboptimal convergence rate of $O(t^{-2})$ in terms of objective function gap.

\paragraph{Closed-loop control systems.} The closed-loop damping approach in~\citet{Attouch-2013-Global,Attouch-2016-Dynamic} closely resembles ours. In particular, they interpret various Newton-type methods as the discretization of the closed-loop control system without inertia and prove the existence and uniqueness of a solution as well as the convergence rate of the solution trajectory. There are, however, some significant differences between our work and theirs. In particular, the appearance of inertia is well known to make analysis much more challenging. Standard existence and uniqueness proofs based on the Cauchy-Schwarz theorem suffice to analyze the system of~\citet{Attouch-2013-Global,Attouch-2016-Dynamic} thanks to the lack of inertia, while Picard iterates and the Banach fixed-point theorem are necessary for our analysis. The construction of the Lyapunov function is also more difficult for the system with inertia.

This is an active research area and we refer the interested reader to a recent article of~\citet{Attouch-2020-Fast} for a comprehensive treatment of this topic. 

\paragraph{Continuous-time interpretation of high-order tensor algorithms.} There is comparatively little work on continuous-time perspectives on high-order tensor algorithms; indeed, we are aware of only~\citet{Wibisono-2016-Variational} and~\citet{Song-2021-Unified}. 

By appealing to a variational formulation,~\citet{Wibisono-2016-Variational} derived the following inertial gradient system with asymptotic vanishing damping:
\begin{equation}\label{Sys:WWJ}
\ddot{x}(t) + \frac{p+2}{t}\dot{x}(t) + C(p+1)^2t^{p-1}\nabla\Phi(x(t)) = 0. 
\end{equation}
Compared to our closed-loop control system, in Eq.~\eqref{Sys:general} and Eq.~\eqref{Sys:choice-feedback}, the system in Eq.~\eqref{Sys:WWJ} is an open-loop system without the algebra equation and does not contain Hessian-driven damping. These differences yield solution trajectories that only attain a suboptimal convergence rate of $O(t^{-(p+1)})$ in terms of objective function gap.   

Very recently,~\citet{Song-2021-Unified} have proposed and analyzed the following dynamics (we consider the Euclidean setting for simplicity):
\begin{equation*}
\left\{\begin{array}{ll}
& a(t)\dot{x}(t) = \dot{a}(t)(z(t) - x(t)) \\ 
& z(t) = \argmin_{x \in \br^d} \int_0^t \dot{a}(s)(\Phi(x(s)) + \langle\nabla\Phi(x(s)), x - x(s)\rangle) ds + \frac{1}{2}\|x - x_0\|^2.
\end{array}\right.
\end{equation*}
Solving the minimization problem yields $z(t) = x_0 - \int_0^t \dot{a}(s)\nabla\Phi(x(s))ds$. Substituting and rearranging yields:
\begin{equation}\label{Sys:SYM}
\ddot{x}(t) + \left(\frac{2\dot{a}(t)}{a(t)} - \frac{\ddot{a}(t)}{\dot{a}(t)}\right)\dot{x}(t) + \left(\frac{(\dot{a}(t))^2}{a(t)}\right)\nabla\Phi(x(t)) = 0.  
\end{equation}
Compared to our closed-loop control system, the system in~\eqref{Sys:SYM} is open-loop and lacks Hessian-driven damping. Moreover, $a(t)$ needs to be determined by hand and~\citet{Song-2021-Unified} do not establish existence or uniqueness of solutions.

\section{Lyapunov Function}\label{sec:Lyapunov}
In this section, we construct a Lyapunov function that allows us to prove existence and uniqueness of a global solution of our closed-loop control system and to analyze convergence rates. As we will see, an analysis of the rate of decrease of the Lyapunov function together with the algebraic equation permit the derivation of new convergence rates for both the objective function gap and the squared gradient norm. 

\subsection{Existence and uniqueness of a global solution}
Our main theorem on the existence and uniqueness of a global solution is summarized as follows. 
\begin{theorem}\label{Theorem:Global-Existence-Uniquess} 
Suppose that $\lambda$ is absolutely continuous on any finite bounded interval. Then the closed-loop control system in Eq.~\eqref{Sys:general} and Eq.~\eqref{Sys:choice-feedback} has a unique global solution, $(x, \lambda, a): [0, +\infty) \mapsto \br^d \times (0, +\infty) \times (0, +\infty)$. 
\end{theorem}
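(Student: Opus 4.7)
The plan is to extend the local solution provided by Theorem~\ref{Theorem:Local-Existence-Uniquess} to a global one via a standard continuation argument, invoking the Lyapunov function to be constructed in this section to rule out finite-time degeneracy. Let $T_{\max} \in (0, +\infty]$ denote the supremum of times $T > 0$ for which the system in Eq.~\eqref{Sys:general}--\eqref{Sys:choice-feedback} admits a unique solution on $[0, T]$; suppose for contradiction that $T_{\max} < \infty$. The aim is to show that $(x(t), v(t))$ has a limit at $T_{\max}$ with $x(T_{\max}) \in \Omega$, so that Theorem~\ref{Theorem:Local-Existence-Uniquess} reapplied at $T_{\max}$ yields a proper extension, contradicting maximality.

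First I would fix a global minimizer $x^*$ of $\Phi$ and, using the first-order reformulation in Eq.~\eqref{Sys:FO}, construct a Lyapunov function of the schematic form
\[
\mathcal{E}(t) = a(t)\bigl(\Phi(x(t)) - \Phi^*\bigr) + \tfrac{1}{2}\|v(t) - x^*\|^2 + (\text{auxiliary Hessian-damping term}),
\]
whose time derivative collapses to a nonpositive expression after cancellations driven by the identities in Eq.~\eqref{Sys:choice-feedback}. Monotonicity of $\mathcal{E}$ on $[0, T_{\max})$ then yields uniform bounds on $\|v(t) - x^*\|$ and on $a(t)(\Phi(x(t)) - \Phi^*)$. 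The hypothesis that $\lambda$ is absolutely continuous on $[0, T_{\max}]$ forces $\lambda$, and hence $a$ and $\dot a$, to be bounded on $[0, T_{\max}]$; combined with the lower bound $a(t) \ge c^2/4 > 0$, these bounds propagate through Eq.~\eqref{Sys:FO} to uniform control of $\Phi(x(t)) - \Phi^*$, $\|v(t)\|$, $\|x(t)\|$, $\|\dot v(t)\|$ and $\|\dot x(t)\|$. In particular, $\lim_{t \to T_{\max}^-}(x(t), v(t))$ exists.

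The main obstacle is to show that $\|\nabla \Phi(x(T_{\max}))\| \neq 0$, i.e., $x(T_{\max}) \in \Omega$. For $p = 1$ this is immediate: the algebraic equation reduces to $\lambda \equiv \theta$ and the vector field in Eq.~\eqref{Sys:vector-field} extends continuously across $\{\nabla \Phi = 0\}$. For $p \ge 2$, inverting the algebraic equation gives $\lambda(t) = \theta^{1/p}\|\nabla \Phi(x(t))\|^{-(p-1)/p}$, so $\|\nabla \Phi(x(t))\| \to 0$ as $t \to T_{\max}^-$ would force $\lambda(t) \to +\infty$, contradicting the absolute continuity (hence boundedness) of $\lambda$ on $[0, T_{\max}]$. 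Thus $x(T_{\max}) \in \Omega$, and Theorem~\ref{Theorem:Local-Existence-Uniquess} applied at $T_{\max}$ produces an extension past $T_{\max}$, the desired contradiction. Uniqueness of the global solution then follows by gluing the unique local solutions on successive maximal intervals.
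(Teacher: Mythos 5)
Your overall strategy---pass to a maximal solution, suppose $T_{\max} < +\infty$, use Lyapunov monotonicity plus the absolute continuity of $\lambda$ to obtain a priori bounds, argue that the limit $\bar{x} = \lim_{t \to T_{\max}^-} x(t)$ exists and lies in $\Omega$ because $\|\nabla\Phi\| \to 0$ would force $\lambda \to +\infty$, and then reapply the local existence theorem to contradict maximality---is exactly the paper's route, and your treatment of the $\bar{x} \in \Omega$ step is right on target. Two corrections, one cosmetic and one substantive.

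The cosmetic one: the Lyapunov function is precisely $\ECal(t) = a(t)(\Phi(x(t)) - \Phi(x^\star)) + \tfrac{1}{2}\|v(t) - x^\star\|^2$ with no auxiliary Hessian-damping term; the identities in Eq.~\eqref{Sys:FO} already deliver $\tfrac{d\ECal}{dt} \leq -a(t)\theta^{1/p}\|\nabla\Phi(x(t))\|^{(p+1)/p}$ directly.

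The substantive one: your claim that the bounds from monotone $\ECal$, together with bounded $\lambda, a, \dot a$, ``propagate through Eq.~\eqref{Sys:FO} to uniform control of $\|x(t)\|$ and $\|\dot x(t)\|$'' has a genuine gap. Monotonicity of $\ECal$ gives $\|v(t) - x^\star\|$ bounded and, with $a(t) \geq c^2/4$, $\Phi(x(t)) - \Phi^\star$ bounded, but a general convex $\Phi$ can have unbounded sublevel sets, so $\|x(t)\|$ does not follow. Nor can you Gronwall your way out via $\dot x = -\tfrac{\dot a}{a}(x - v) - \lambda\nabla\Phi(x)$: that requires a bound on $\|\nabla\Phi(x(t))\|$, and for $p \geq 2$ the algebraic equation with $\lambda$ bounded \emph{above} only controls $\|\nabla\Phi\|$ from \emph{below}, so the argument is circular at exactly the point you need it. What the paper actually does (its Lemma~\ref{Lemma:Lyapunov-second}) is integrate the identity $\tfrac{d}{dt}\bigl(a(t)(x(t) - x^\star)\bigr) = \dot a(t)(v(t) - x^\star) - \lambda(t)a(t)\nabla\Phi(x(t))$, bound the first piece by $(a(t)-a(0))\sqrt{2\ECal(0)}$, and bound $\int_0^t \lambda(s)a(s)\|\nabla\Phi(x(s))\|\,ds$ by H\"{o}lder against the energy-dissipation inequality $\int_0^t a(s)\theta^{1/p}\|\nabla\Phi(x(s))\|^{(p+1)/p}\,ds \leq \ECal(0)$, giving $\|x(t) - x^\star\| \leq \|x_0 - x^\star\| + 3\sqrt{\ECal(0)}$ with no a priori bound on $\nabla\Phi(x(t))$ whatsoever. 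Only with this bound in hand does $\nabla\Phi(x(\cdot))$ become bounded (by continuity on a compact set), which is then what makes $\dot x$ bounded and $\bar x$ exist. Without some version of this integral estimate your continuation argument does not close.
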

\begin{remark}
Intuitively, the feedback law $\lambda(\cdot)$, which we will show satisfies $\lambda(t) \rightarrow +\infty$ as $t \rightarrow +\infty$, links to the gradient norm $\|\nabla\Phi(x(\cdot))\|$ via the algebraic equation. Since we are interested in the worst-case convergence rate of solution trajectories, which corresponds to the worst-case iteration complexity of discrete-time algorithms, it is necessary that $\lambda$ does not dramatically change. In open-loop Levenberg-Marquardt systems,~\citet{Attouch-2011-Continuous} impose the same condition on the regularization parameters. In closed-loop control systems, however, $\lambda$ is not a given datum but an emergent component of the dynamics. Thus, it is preferable to prove that $\lambda$ satisfies this condition rather than assuming it, as done in~\citet[Theorem~5.2]{Attouch-2013-Global} and~\citet[Theorem~2.4]{Attouch-2016-Dynamic} for a closed-loop control system without inertia. The key step in their proof is to show that $\lambda(t) \leq \lambda(0)e^t$ locally by exploiting the specific structure of their system. This technical approach is, however, not applicable to our system due to the incorporation of the inertia term; see Section~\ref{subsec:Lyapunov-dis} for further discussion.
\end{remark}
Recall that the system in Eq.~\eqref{Sys:general} and Eq.~\eqref{Sys:choice-feedback} can be equivalently written as the first-order system in time and space, as in Eq.~\eqref{Sys:FO}. Accordingly, we define the following simple Lyapunov function:
\begin{equation}\label{Def:Lyapunov}
\ECal(t) = a(t)(\Phi(x(t)) - \Phi(x^\star)) + \frac{1}{2}\|v(t) - x^\star\|^2,  
\end{equation}
where $x^\star$ is a global optimal solution of $\Phi$. 
\begin{remark}
Note that the Lyapunov function~\eqref{Def:Lyapunov} is composed of a sum of the mixed energy $\frac{1}{2}\|v(t) - x^*\|$ and the potential energy $a(t)(\Phi(x(t)) - \Phi(x^*))$. This function is similar to Lyapunov functions developed for analyzing the convergence of Newton-like dynamics~\citep{Attouch-2011-Continuous,Attouch-2013-Global,Abbas-2014-Newton,Attouch-2016-Dynamic} and the inertial gradient system with asymptotic vanishing damping~\citep{Su-2016-Differential,Attouch-2016-Fast,Shi-2018-Understanding,Wilson-2021-Lyapunov}. Indeed,~\citet{Wilson-2021-Lyapunov} construct a unified time-dependent Lyapunov function using the Bregman divergence and showed that their approach is equivalent to Nesterov's estimate sequence technique in a number of cases, including quasi-monotone subgradient, accelerated gradient descent and conditional gradient. Our Lyapunov function differs from existing choices in that $v$ is not a standard momentum term depending on $\dot{x}$, but depends on $x$, $\lambda$ and $\nabla\Phi$; see Eq.~\eqref{Sys:FO}.
\end{remark}
We provide two technical lemmas that characterize the descent property of $\ECal$ and the boundedness of the local solution $(x, v): [0, t_0] \mapsto \br^d \times \br^d$.
\begin{lemma}\label{Lemma:Lyapunov-first}
Suppose that $(x, v, \lambda, a): [0, t_0] \mapsto \br^d \times \br^d \times (0, +\infty) \times (0, +\infty)$ is a local solution of the first-order system in Eq.~\eqref{Sys:FO}. Then, we have
\begin{equation*}
\frac{d\ECal(t)}{dt} \leq - a(t)\theta^{\frac{1}{p}}\|\nabla\Phi(x(t))\|^{\frac{p+1}{p}}, \quad \forall t \in  [0, t_0]. 
\end{equation*}
\end{lemma}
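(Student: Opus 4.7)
The plan is to differentiate $\mathcal{E}(t)$ directly using the first-order system in Eq.~\eqref{Sys:FO}, combine the resulting inner-product terms by telescoping the cross-terms in $x(t)-v(t)$ and $v(t)-x^\star$, apply convexity of $\Phi$ at $x^\star$, and finally convert the quadratic-gradient term to the desired $(p{+}1)/p$ form using the identity $(\dot{a}(t))^2 = \lambda(t)a(t)$ together with the algebraic equation $(\lambda(t))^p\|\nabla\Phi(x(t))\|^{p-1} = \theta$.

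Concretely, differentiating \eqref{Def:Lyapunov} with respect to $t$ gives
\begin{equation*}
\frac{d\mathcal{E}(t)}{dt} = \dot{a}(t)(\Phi(x(t))-\Phi(x^\star)) + a(t)\langle \nabla\Phi(x(t)),\dot{x}(t)\rangle + \langle v(t)-x^\star,\dot{v}(t)\rangle.
\end{equation*}
From the second equation of \eqref{Sys:FO}, $a(t)\dot{x}(t) = -\dot{a}(t)(x(t)-v(t)) - (\dot{a}(t))^2\nabla\Phi(x(t))$, and from the first, $\dot{v}(t) = -\dot{a}(t)\nabla\Phi(x(t))$. Substituting and grouping the two inner products involving $\nabla\Phi(x(t))$ collapses via the telescoping identity $-\langle \nabla\Phi(x(t)),x(t)-v(t)\rangle - \langle \nabla\Phi(x(t)),v(t)-x^\star\rangle = -\langle \nabla\Phi(x(t)),x(t)-x^\star\rangle$, yielding
\begin{equation*}
\frac{d\mathcal{E}(t)}{dt} = \dot{a}(t)\bigl[\Phi(x(t))-\Phi(x^\star) - \langle \nabla\Phi(x(t)), x(t)-x^\star\rangle\bigr] - (\dot{a}(t))^2\|\nabla\Phi(x(t))\|^2.
\end{equation*}

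Next, convexity of $\Phi$ gives $\Phi(x(t))-\Phi(x^\star) \leq \langle \nabla\Phi(x(t)),x(t)-x^\star\rangle$, and the definition of $a$ yields $\dot{a}(t) = \tfrac{1}{2}\sqrt{\lambda(t)}\bigl(\int_0^t\sqrt{\lambda(s)}\,ds + c\bigr) \geq 0$, so the bracketed term contributes nonpositively. Thus $\tfrac{d\mathcal{E}(t)}{dt} \leq -(\dot{a}(t))^2\|\nabla\Phi(x(t))\|^2$. Finally, the same expression for $\dot{a}(t)$ gives $(\dot{a}(t))^2 = \lambda(t)\cdot a(t)$, and the algebraic equation rewrites $\lambda(t) = \theta^{1/p}\|\nabla\Phi(x(t))\|^{-(p-1)/p}$, so
\begin{equation*}
(\dot{a}(t))^2\|\nabla\Phi(x(t))\|^2 = a(t)\,\theta^{1/p}\|\nabla\Phi(x(t))\|^{2-(p-1)/p} = a(t)\,\theta^{1/p}\|\nabla\Phi(x(t))\|^{(p+1)/p},
\end{equation*}
which delivers the claimed bound.

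There is no serious obstacle: the proof is essentially bookkeeping. The only mildly delicate point is making sure the convexity inequality is applied with the correct sign (which requires noticing $\dot{a}(t)\geq 0$), and recognizing the algebraic identity $(\dot{a})^2 = \lambda a$ that allows one to pass from a pure squared-gradient penalty to the $(p{+}1)/p$ power needed to mesh with the algebraic equation. The careful cancellation of the $v$-term cross products into a single $\langle \nabla\Phi(x(t)), x(t)-x^\star\rangle$ is what makes the chosen Lyapunov function ``simple yet nontrivial,'' and in a sense this is precisely what motivates the definition of $v(t)$ in the first-order reformulation.
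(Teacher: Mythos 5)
Your proof is correct and follows essentially the same route as the paper: differentiate $\ECal$, substitute from the two ODEs in Eq.~\eqref{Sys:FO}, collapse the cross-terms into a single $\langle \nabla\Phi(x(t)), x(t)-x^\star\rangle$ that is handled by convexity and $\dot a(t)\ge 0$, and then use $(\dot a)^2=\lambda a$ together with the algebraic equation. The only cosmetic difference is that the paper applies the identity $(\dot a)^2/a=\lambda$ when substituting the second ODE, while you keep $(\dot a)^2$ explicit and convert at the end.
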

\begin{proof}
By the definition, we have
\begin{equation*}
\frac{d\ECal(t)}{dt} = \dot{a}(t)\Phi(x(t)) - \dot{a}(t)\Phi(x^\star) + \langle a(t)\dot{x}(t), \nabla\Phi(x(t))\rangle + \langle\dot{v}(t), v(t) - x^\star\rangle.  
\end{equation*}
In addition, we have $\langle\dot{v}(t), v(t) - x^\star\rangle = \langle\dot{v}(t), v(t) - x(t)\rangle + \langle\dot{v}(t), x(t) - x^\star\rangle$ and $\dot{v}(t) = -\dot{a}(t) \nabla \Phi(x(t))$. Putting these pieces together yields: 
\begin{align*}
\lefteqn{\frac{d\ECal(t)}{dt} = \underbrace{\dot{a}(t)(\Phi(x(t)) - \Phi(x^\star) - \langle \nabla\Phi(x(t)), x(t) - x^\star\rangle)}_{\textbf{I}}} \\
& + \underbrace{\langle a(t)\dot{x}(t), \nabla\Phi(x(t))\rangle + \dot{a}(t)\langle x(t) - v(t), \nabla\Phi(x(t))\rangle}_{\textbf{II}}.  
\end{align*}
By the convexity of $\Phi$, we have $\Phi(x(t)) - \Phi(x^\star) - \langle \nabla\Phi(x(t)), x(t) - x^\star\rangle \leq 0$. Since $\dot{a}(t) \geq 0$, we have $\textbf{I} \leq 0$. Furthermore, Eq.~\eqref{Sys:FO} implies that  
\begin{equation*}
\dot{x}(t) + \frac{\dot{a}(t)}{a(t)}(x(t) - v(t)) = - \lambda(t)\nabla\Phi(x(t)),  
\end{equation*}
which implies that 
\begin{equation*}
\textbf{II} = \langle a(t)\dot{x}(t) + \dot{a}(t)x(t) - \dot{a}(t)v(t), \nabla\Phi(x(t))\rangle = - \lambda(t)a(t)\|\nabla\Phi(x(t))\|^2. 
\end{equation*}
This together with the algebraic equation implies $\textbf{II} \leq -a(t)\theta^{\frac{1}{p}}\|\nabla\Phi(x(t))\|^{\frac{p+1}{p}}$. Putting all these pieces together yields the desired inequality. 
\end{proof}
In view of Lemma~\ref{Lemma:Lyapunov-first}, the key ingredient for analyzing the convergence rate in terms of the objective function gap and the squared gradient norm is a lower bound on $a(t)$. We summarize this result in the following lemma.   
\begin{lemma}\label{Lemma:Lyapunov-second}
Suppose that $(x, v, \lambda, a): [0, t_0] \mapsto \br^d \times \br^d \times (0, +\infty) \times (0, +\infty)$ is a local solution of the first-order system in Eq.~\eqref{Sys:FO}. Then, we have
\begin{equation*}
a(t) \geq \left(\frac{c}{2} + \left(\frac{\theta^{\frac{2}{3p+1}}}{(p+1)(\ECal(0))^{\frac{p-1}{3p+1}}}\right)^{\frac{3p+1}{4}} t^{\frac{3p+1}{4}}  \right)^2, \quad \forall t \in [0, t_0]. 
\end{equation*} 
\end{lemma}
\begin{proof}
For $p=1$, the feedback control law is given by $\lambda(t) = \theta$, for $\forall t \in [0, t_0]$, and
\begin{equation*}
a(t) = \left(\frac{c}{2} + \frac{\sqrt{\theta}t}{2}\right)^2 = \left(\frac{c}{2} + \left(\frac{\theta^{\frac{2}{3p+1}}}{(p+1)(\ECal(0))^{\frac{p-1}{3p+1}}}\right)^{\frac{3p+1}{4}} t^{\frac{3p+1}{4}}  \right)^2.
\end{equation*}
For $p \geq 2$, the algebraic equation implies that $\|\nabla\Phi(x(t))\| = (\frac{\theta^{1/p}}{\lambda(t)})^{\frac{p}{p-1}}$ since $\lambda(t) > 0$ for $\forall t \in [0, t_0]$. This together with Lemma~\ref{Lemma:Lyapunov-third} implies that
\begin{equation*}
\frac{d\ECal(t)}{dt} \leq - a(t)\theta^{\frac{1}{p}}\left\|\nabla\Phi(x(t))\right\|^{\frac{p+1}{p}} = - a(t)\theta^{\frac{2}{p-1}}[\lambda(t)]^{-\frac{p+1}{p-1}}. 
\end{equation*}
Since $\ECal(t) \geq 0$, we have
\begin{equation*}
\int_0^t a(s)\theta^{\frac{2}{p-1}}(\lambda(s))^{-\frac{p+1}{p-1}} ds \leq \ECal(0). 
\end{equation*}
By the H\"{o}lder inequality, we have
\begin{align*}
\lefteqn{\int_0^t (a(s))^{\frac{p-1}{3p+1}} ds = \int_0^t (a(s)(\lambda(s))^{-\frac{p+1}{p-1}})^{\frac{p-1}{3p+1}} (\lambda(s))^{\frac{p+1}{3p+1}} ds} \\ 
& \leq \left(\int_0^t a(s)(\lambda(s))^{-\frac{p+1}{p-1}} ds\right)^{\frac{p-1}{3p+1}}\left(\int_0^t \sqrt{\lambda(s)} ds\right)^{\frac{2p+2}{3p+1}}.
\end{align*}
Combining these results with the definition of $a$ yields: 
\begin{align*}
\lefteqn{\int_0^t (a(s))^{\frac{p-1}{3p+1}} ds \leq \theta^{-\frac{2}{3p+1}}(\ECal(0))^{\frac{p-1}{3p+1}}\left(\int_0^t \sqrt{\lambda(s)} ds\right)^{\frac{2p+2}{3p+1}}} \\ 
& \leq \theta^{-\frac{2}{3p+1}}(\ECal(0))^{\frac{p-1}{3p+1}}(2\sqrt{a(t)} - c)^{\frac{2p+2}{3p+1}} \leq 2\theta^{-\frac{2}{3p+1}}(\ECal(0))^{\frac{p-1}{3p+1}}\left(\sqrt{a(t)} - \frac{c}{2}\right)^{\frac{2p+2}{3p+1}}.  
\end{align*}
Since $a(t)$ is nonnegative and nondecreasing with $\sqrt{a(0)} = \frac{c}{2}$, we have
\begin{equation}\label{inequality:Lyapunov-second-main}
\int_0^t \left(\sqrt{a(s)} - \frac{c}{2}\right)^{\frac{2p-2}{3p+1}} ds \leq 2\theta^{-\frac{2}{3p+1}}(\ECal(0))^{\frac{p-1}{3p+1}}\left(\sqrt{a(t)} - \frac{c}{2}\right)^{\frac{2p+2}{3p+1}}.  
\end{equation}
The remaining steps in the proof are based on the Bihari-LaSalle inequality~\citep{Lasalle-1949-Uniqueness,Bihari-1956-Generalization}. In particular, we denote $y(\cdot)$ by $y(t) = \int_0^t (\sqrt{a(s)} - \frac{c}{2})^{\frac{2p-2}{3p+1}} ds$. Then, $y(0) = 0$ and Eq.~\eqref{inequality:Lyapunov-second-main} implies
\begin{equation*}
y(t) \leq 2\theta^{-\frac{2}{3p+1}}(\ECal(0))^{\frac{p-1}{3p+1}}(\dot{y}(t))^{\frac{p+1}{p-1}}.
\end{equation*}
This implies
\begin{equation*}
\dot{y}(t) \geq \left(\frac{y(t)}{2\theta^{-\frac{2}{3p+1}}(\ECal(0))^{\frac{p-1}{3p+1}}}\right)^{\frac{p-1}{p+1}} \Longrightarrow \frac{\dot{y}(t)}{(y(t))^{\frac{p-1}{p+1}}} \geq \left(\frac{1}{2\theta^{-\frac{2}{3p+1}}(\ECal(0))^{\frac{p-1}{3p+1}}}\right)^{\frac{p-1}{p+1}}. 
\end{equation*}
Integrating this inequality over $[0, t]$ yields: 
\begin{equation*}
(y(t))^{\frac{2}{p+1}} \geq \frac{2}{p+1}\left(\frac{1}{2\theta^{-\frac{2}{3p+1}}(\ECal(0))^{\frac{p-1}{3p+1}}}\right)^{\frac{p-1}{p+1}} t. 
\end{equation*}
Equivalently, by the definition of $y(t)$, we have
\begin{equation*}
\int_0^t \left(\sqrt{a(s)} - \frac{c}{2}\right)^{\frac{2p-2}{3p+1}} ds \geq \left(\frac{2}{p+1}\right)^{\frac{p+1}{2}}\left(\frac{1}{2\theta^{-\frac{2}{3p+1}}(\ECal(0))^{\frac{p-1}{3p+1}}}\right)^{\frac{p-1}{2}} t^{\frac{p+1}{2}}. 
\end{equation*}
This together with Eq.~\eqref{inequality:Lyapunov-second-main} yields that  
\begin{align*}
\sqrt{a(t)} & \geq \frac{c}{2} + \left(\frac{1}{2\theta^{-\frac{2}{3p+1}}(\ECal(0))^{\frac{p-1}{3p+1}}}\int_0^t \left(\sqrt{a(s)} - \frac{c}{2}\right)^{\frac{2p-2}{3p+1}} ds\right)^{\frac{3p+1}{2p+2}} \\
& \geq \frac{c}{2} + \left(\frac{\theta^{\frac{2}{3p+1}}}{(p+1)(\ECal(0))^{\frac{p-1}{3p+1}}}\right)^{\frac{3p+1}{4}} t^{\frac{3p+1}{4}}.   
\end{align*}
This completes the proof. 
\end{proof}
\begin{lemma}\label{Lemma:Lyapunov-third}
Suppose that $(x, v, \lambda, a): [0, t_0] \mapsto \br^d \times \br^d \times (0, +\infty) \times (0, +\infty)$ is a local solution of the first-order system in Eq.~\eqref{Sys:FO}. Then, $(x(\cdot), v(\cdot))$ is bounded over the interval $[0, t_0]$ and the upper bound only depends on the initial condition.  
\end{lemma}
\begin{proof}
By Lemma~\ref{Lemma:Lyapunov-first}, the function $\ECal$ is nonnegative and nonincreasing on the interval $[0, t_0]$. This implies that, for any $t \in [0, t_0]$, we have 
\begin{equation*}
\frac{1}{2}\|v(t) - x^\star\|^2 \leq a(t)(\Phi(x(t)) - \Phi(x^\star)) + \frac{1}{2}\|v(t) - x^\star\|^2 \leq \ECal(0). 
\end{equation*}
Therefore, $v(\cdot)$ is bounded on the interval $[0, t_0]$ and the upper bound only depends on the initial condition. Furthermore, we have 
\begin{equation*}
a(t)(x(t) - x^\star) - a(0)(x_0 - x^\star) = \int_0^t (\dot{a}(s)(x(s) - x^\star) + a(s)\dot{x}(s)) ds. 
\end{equation*}
Using the triangle inequality and $a(0) = c^2$, we have
\begin{align*}
\lefteqn{\|a(t)(x(t) - x^\star)\| \leq c^2\|x_0 - x^\star\| + \int_0^t \|a(s)\dot{x}(s) + \dot{a}(t)x(s) - \dot{a}(s)x^\star\| ds} \\
& \overset{\textnormal{Eq.~\eqref{Sys:FO}}}{\leq} c^2\|x_0 - x^\star\| + \int_0^t \|\dot{a}(s)v(s) - \dot{a}(s)x^\star\| ds + \int_0^t \|\lambda(s)a(s)\nabla\Phi(x(s))\| ds.
\end{align*}
Note that $\|v(t) - x^\star\| \leq \sqrt{2\ECal(0)}$ is proved for all $t \in [0, t_0]$ and $a(t)$ is monotonically increasing with $a(0)=c^2$. Thus, the following inequality holds:
\begin{eqnarray}\label{inequality:Lyapunov-third-1}
\|x(t) - x^\star\| & \leq & \frac{c^2\|x_0 - x^\star\| + (a(t) - c^2)\sqrt{2\ECal(0)} + \int_0^t \lambda(s)a(s)\|\nabla\Phi(x(s))\| ds}{a(t)} \\ 
& \leq & \|x_0 - x^\star\| + \sqrt{2\ECal(0)} + \frac{1}{a(t)}\int_0^t \lambda(s)a(s)\|\nabla\Phi(x(s))\| ds. \nonumber
\end{eqnarray}
The algebra equation implies that $\lambda(t)\|\nabla\Phi(x(t))\| = \theta^{\frac{1}{p}}\|\nabla\Phi(x(t))\|^{\frac{1}{p}}$. Thus, we have 
\begin{equation*}
\int_0^t \lambda(s)a(s)\|\nabla\Phi(x(s))\| ds = \theta^{\frac{1}{p}}\left(\int_0^t a(s)\|\nabla\Phi(x(s))\|^{\frac{1}{p}} ds\right). 
\end{equation*}
By the H\"{o}lder inequality, Lemma~\ref{Lemma:Lyapunov-first} and using the fact that $a(t)$ is monotonically increasing, we have 
\begin{eqnarray*}
\lefteqn{\int_0^t a(s)\|\nabla\Phi(x(s))\|^{\frac{1}{p}} ds = \int_0^t (a(s))^{\frac{p}{p+1}}(a(s)\|\nabla\Phi(x(s))\|^{\frac{p+1}{p}})^{\frac{1}{p+1}} ds} \\ 
& \leq & \left(\int_0^t a(s) ds\right)^{\frac{p}{p+1}}\left(\int_0^t a(s)\|\nabla\Phi(x(s))\|^{\frac{p+1}{p}} ds\right)^{\frac{1}{p+1}} \ \leq \ \theta^{-\frac{1}{p(p+1)}}(\ECal(0))^{\frac{1}{p+1}}\left(\int_0^t a(s) ds\right)^{\frac{p}{p+1}} \\
& \leq & \theta^{-\frac{1}{p(p+1)}}(\ECal(0))^{\frac{1}{p+1}}(ta(t))^{\frac{p}{p+1}}. 
\end{eqnarray*}
Putting these pieces together yields 
\begin{equation}\label{inequality:Lyapunov-third-2}
\frac{1}{a(t)}\int_0^t \lambda(s)a(s)\|\nabla\Phi(x(s))\| ds \leq \left(\frac{\theta\ECal(0) t^p}{a(t)}\right)^{\frac{1}{p+1}}. 
\end{equation}
Combining Eq.~\eqref{inequality:Lyapunov-third-1} with Eq.~\eqref{inequality:Lyapunov-third-2} and Lemma~\ref{Lemma:Lyapunov-second} yields that $\|x(t) - x^\star\| \leq \|x_0 - x^\star\| +\sqrt{2\ECal(0)} + C$ where $C > 0$ depends on $\ECal(0)$, $p$, $c$ and $\theta$. Therefore, $x(t)$ is bounded on the interval $[0, t_0]$ and the upper bound only depends on the initial condition. This completes the proof. 
\end{proof}
\paragraph{Proof of Theorem~\ref{Theorem:Global-Existence-Uniquess}:} We are ready to prove our main result on the existence and uniqueness of a global solution. In particular, let us consider a maximal solution of the closed-loop control system in Eq.~\eqref{Sys:general} and Eq.~\eqref{Sys:choice-feedback}: 
\begin{equation*}
(x, \lambda, a): [0, T_{\max}) \mapsto \Omega \times (0, +\infty) \times (0, +\infty). 
\end{equation*}
The existence of a maximal solution follows from a classical argument relying on the existence and uniqueness of a local solution (see Theorem~\ref{Theorem:Local-Existence-Uniquess}). 

It remains to show that the maximal solution is a global solution; that is, $T_{\max} = +\infty$, if $\lambda$ is absolutely continuous on any finite bounded interval. Indeed, the property of $\lambda$ guarantees that $\lambda(\cdot)$ is bounded on the interval $[0, T_{\max})$. By Lemma~\ref{Lemma:Lyapunov-third} and the equivalence between the closed-loop control system in Eq.~\eqref{Sys:general} and Eq.~\eqref{Sys:choice-feedback} and the first-order system in Eq.~\eqref{Sys:FO}, the solution trajectory $x(\cdot)$ is bounded on the interval $[0, T_{\max})$ and the upper bound only depends on the initial condition. This implies that $\dot{x}(\cdot)$ is also bounded on the interval $[0, T_{\max})$ by considering the system in the autonomous form of Eq.~\eqref{Sys:autonomous} and~\eqref{Sys:vector-field}. Putting these pieces together yields that $x(\cdot)$ is Lipschitz continuous on $[0, T_{\max})$ and there exists $\bar{x} = \lim_{t \rightarrow T_{\max}} x(t)$. 

If $T_{\max} < +\infty$, the absolute continuity of $\lambda$ on any finite bounded interval implies that $\lambda(\cdot)$ is bounded on $[0, T_{\max}]$. This together with the algebraic equation implies that $\bar{x} \in \Omega$. However, by Theorem~\ref{Theorem:Local-Existence-Uniquess} with initial data $\bar{x}$, we can extend the solution to a strictly larger interval which contradicts the maximality of the aforementioned solution. This completes the proof. 

\subsection{Rate of convergence}
We establish a convergence rate for a global solution of the closed-loop control system in Eq.~\eqref{Sys:general} and Eq.~\eqref{Sys:choice-feedback}. 
\begin{theorem}\label{Theorem:Trajectory-Convergence-Rate}
Suppose that $(x, \lambda, a): [0, +\infty) \mapsto \br^d \times (0, +\infty) \times (0, +\infty)$ is a global solution of the closed-loop control system in Eq.~\eqref{Sys:general} and Eq.~\eqref{Sys:choice-feedback}. Then, the objective function gap satisfies
\begin{equation*}
\Phi(x(t)) - \Phi(x^\star) = O(t^{-\frac{3p+1}{2}}).  
\end{equation*}
and the squared gradient norm satisfies
\begin{equation*}
\inf_{0 \leq s \leq t} \|\nabla \Phi(x(s))\|^2 = O(t^{-3p}).  
\end{equation*}
\end{theorem}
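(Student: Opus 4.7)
My plan is to extract both rates from the Lyapunov bound of Lemma~\ref{Lemma:Lyapunov-first}. Integrating that bound and using $\ECal(t)\geq 0$, I obtain the key integral inequality
\begin{equation*}
\int_0^t a(s)\,\|\nabla\Phi(x(s))\|^{(p+1)/p}\,ds \;\leq\; \frac{\ECal(0)}{\theta^{1/p}}.
\end{equation*}
Combined with $\Phi(x(t))-\Phi(x^\star) \leq \ECal(0)/a(t)$, which also follows from $\ECal(t)\leq\ECal(0)$ together with nonnegativity of $\|v(t)-x^\star\|^2$, both parts of the theorem reduce to establishing the growth estimate $a(t)\gtrsim t^{(3p+1)/2}$.

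To derive this growth estimate, I would first convert the integral inequality into an autonomous inequality for $a$. From the definition $a(s) = \tfrac{1}{4}(\int_0^s \sqrt{\lambda(\tau)}\,d\tau + c)^2$ one has $\lambda(s) = \dot{a}(s)^2/a(s)$. Using the algebraic equation $\|\nabla\Phi(x(s))\|^{p-1} = \theta/\lambda(s)^p$ to eliminate the gradient norm, the bound becomes, for $p\geq 2$,
\begin{equation*}
\int_0^t a(s)^{2p/(p-1)}\, \dot{a}(s)^{-2(p+1)/(p-1)}\,ds \;\leq\; C_1.
\end{equation*}
The critical observation is that the substitution $a(s) = y(s)^{p+1}$ makes the powers of $y$ cancel exactly, reducing the bound to $\int_0^t \dot{y}(s)^{-q}\,ds \leq C_2$ with $q := 2(p+1)/(p-1)$. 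I then apply H\"older's inequality with conjugate exponents $(q+1)/q$ and $q+1$:
\begin{equation*}
t = \int_0^t \dot{y}(s)^{q/(q+1)}\cdot \dot{y}(s)^{-q/(q+1)}\,ds \;\leq\; \left(\int_0^t\dot{y}(s)\,ds\right)^{q/(q+1)}\cdot C_2^{1/(q+1)},
\end{equation*}
which rearranges to $y(t)\gtrsim t^{(q+1)/q}$. Since $(p+1)(q+1)/q = (3p+1)/2$, this yields $a(t) = y(t)^{p+1} \gtrsim t^{(3p+1)/2}$, giving the objective-gap rate. The case $p=1$ is immediate because the algebraic equation reduces to $\lambda\equiv\theta$, so $a(t)=\Theta(t^2)$ by direct computation.

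For the squared gradient norm, I would exploit the monotonicity of $a(\cdot)$: restricting the integral inequality to $[t/2, t]$ and using $a(s)\geq a(t/2)$ there gives
\begin{equation*}
a(t/2)\int_{t/2}^t \|\nabla\Phi(x(s))\|^{(p+1)/p}\,ds \;\leq\; \frac{\ECal(0)}{\theta^{1/p}}.
\end{equation*}
A mean-value argument then produces $\inf_{s\in[t/2,t]}\|\nabla\Phi(x(s))\|^{(p+1)/p} = O\!\left(1/(t\,a(t/2))\right) = O(t^{-(3p+3)/2})$, and raising to the power $2p/(p+1)$ yields $\inf_{s\leq t}\|\nabla\Phi(x(s))\|^2 = O(t^{-3p})$. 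The hard part is recognizing the substitution $a = y^{p+1}$ that decouples $a$ from $\dot{a}$ in the integrand; a dimensional-analysis check with a power-law ansatz $a(t)\sim t^\alpha$ pins down the critical exponent $\alpha = (3p+1)/2$ and identifies $p+1$ as the unique substitution power that eliminates the $y$-dependence. Without this, the coupling between $a$ and $\dot a$ would obstruct a direct Gr\"onwall or Bihari-style attack.
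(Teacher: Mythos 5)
Your proof is correct and delivers the stated rates, but the route to the key lower bound $a(t)\gtrsim t^{(3p+1)/2}$ is genuinely different from the paper's. You substitute $\lambda=\dot a^2/a$ (from $a=\tfrac14(\int_0^t\sqrt{\lambda}+c)^2$), then change variables $a=y^{p+1}$ to make the $y$-powers cancel exactly in $a^{2p/(p-1)}\dot a^{-2(p+1)/(p-1)}$, reducing the Lyapunov bound to $\int_0^t \dot y^{-q}\leq C$ with $q=2(p+1)/(p-1)$; a single H\"older step on $\int \dot y^{q/(q+1)}\dot y^{-q/(q+1)}$ then yields the lower bound $y(t)\gtrsim t^{(q+1)/q}$, hence $a(t)\gtrsim t^{(3p+1)/2}$. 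The paper instead raises the Lyapunov bound to a fractional power via a different H\"older split, obtaining $\int_0^t a^{(p-1)/(3p+1)}ds\leq C(\sqrt{a(t)}-c/2)^{(2p+2)/(3p+1)}$, and then runs a Bihari--LaSalle argument on $z(t)=\int_0^t(\sqrt{a(s)}-c/2)^{(2p-2)/(3p+1)}ds$ with the ODE inequality $z\leq K\dot z^{(p+1)/(p-1)}$. Your decoupling substitution $a=y^{p+1}$ is cleaner and avoids the Bihari--LaSalle machinery entirely; the paper's version, however, carries the $c/2$ offset through so that the lower bound is exact at $t=0$, and its structure is directly mirrored in the discrete-time counterpart (Lemma~\ref{Lemma:CAFI-Key}), which is why the paper organizes it that way. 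For the gradient-norm rate, you localize the integral to $[t/2,t]$ and apply a Markov/mean-value bound, whereas the paper pulls $\inf_{0\le s\le t}\|\nabla\Phi(x(s))\|^{(p+1)/p}$ out of the full integral and bounds $\int_0^t a(s)\,ds$ below directly; both give $O(t^{-3p})$ and the choice is stylistic.
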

\begin{remark}
This theorem shows that the convergence rate is $O(t^{-(3p+1)/2})$ in terms of objective function gap and $O(t^{-3p})$ in terms of squared gradient norm. Note that the former result does not imply the latter result but only gives a rate of $O(t^{-(3p+1)/2})$ for the squared gradient norm minimization even when $\Phi \in \FCal_\ell^1(\br^d)$ is assumed with $\|\nabla\Phi(x(t))\|^2 \leq 2\ell(\Phi(x(t)) - \Phi(x^\star))$. In fact, the squared gradient norm minimization is generally of independent interest~\citep{Nesterov-2012-Make,Shi-2018-Understanding,Grapiglia-2020-Stationary} and its analysis involves different techniques.
\end{remark}
The following two lemmas are a global version of Lemmas~\ref{Lemma:Lyapunov-first} and~\ref{Lemma:Lyapunov-second} and the proofs are the same. Thus, we only state the result. 
\begin{lemma}\label{Lemma:Lyapunov-fourth}
Suppose that $(x, v, \lambda, a): [0, +\infty) \mapsto \br^d \times \br^d \times (0, +\infty) \times (0, +\infty)$ is a global solution of the first-order system in Eq.~\eqref{Sys:FO}. Then, we have
\begin{equation*}
\frac{d\ECal(t)}{dt} \leq -a(t)\theta^{\frac{1}{p}}\|\nabla\Phi(x(t))\|^{\frac{p+1}{p}}. 
\end{equation*}
\end{lemma}
\begin{lemma}\label{Lemma:Lyapunov-fifth}
Suppose that $(x, v, \lambda, a): [0, +\infty) \mapsto \br^d \times \br^d \times (0, +\infty) \times (0, +\infty)$ is a global solution of the first-order system in Eq.~\eqref{Sys:FO}. Then, we have
\begin{equation*}
a(t) \geq \left(\frac{c}{2} + \left(\frac{\theta^{\frac{2}{3p+1}}}{(p+1)(\ECal(0))^{\frac{p-1}{3p+1}}}\right)^{\frac{3p+1}{4}} t^{\frac{3p+1}{4}}  \right)^2. 
\end{equation*} 
\end{lemma}
\paragraph{Proof of Theorem~\ref{Theorem:Trajectory-Convergence-Rate}:} Since the first-order system in Eq.~\eqref{Sys:FO} is equivalent to the closed-loop control system in Eq.~\eqref{Sys:general} and Eq.~\eqref{Sys:choice-feedback}, $(x, \lambda, a): [0, +\infty) \rightarrow \br^d \times (0, +\infty) \times (0, +\infty)$ is a global solution of the latter system with $x(0) = x_0 \in \Omega$. By Lemma~\ref{Lemma:Lyapunov-fourth}, we have $\ECal(t) \leq \ECal(0)$ for $\forall t \geq 0$; that is,   
\begin{equation*}
a(t)(\Phi(x(t)) - \Phi(x^\star)) + \frac{1}{2}\|v(t) - x^\star\|^2 \leq \ECal(0). 
\end{equation*}
Since $(x(0), v(0)) = (x_0, v_0)$ and $\|v(t) - x^\star\| \geq 0$, we have $a(t)(\Phi(x(t)) - \Phi(x^\star)) \leq \ECal(0)$. By Lemma~\ref{Lemma:Lyapunov-fifth}, we have 
\begin{equation*}
\Phi(x(t)) - \Phi(x^\star) \leq \ECal(0)\left(\frac{c}{2} + \left(\frac{\theta^{\frac{2}{3p+1}}}{(p+1)(\ECal(0))^{\frac{p-1}{3p+1}}}\right)^{\frac{3p+1}{4}} t^{\frac{3p+1}{4}}\right)^{-2} = O(t^{-\frac{3p+1}{2}}). 
\end{equation*}
Using Lemma~\ref{Lemma:Lyapunov-fourth} and the fact that $\ECal(t) \geq 0$ for $\forall t \in [0, +\infty)$, we have
\begin{equation*}
\int_0^t a(s)\theta^{\frac{1}{p}}\|\nabla\Phi(x(s))\|^{\frac{p+1}{p}} ds \leq \ECal(0),
\end{equation*}
which implies
\begin{equation*}
\left(\inf_{0 \leq s \leq t} \|\nabla \Phi(x(s))\|^{\frac{p+1}{p}}\right)\left(\int_0^t a(s) ds\right) \leq \theta^{-\frac{1}{p}}\ECal(0). 
\end{equation*}
By Lemma~\ref{Lemma:Lyapunov-fifth}, we have
\begin{equation*}
\int_0^t a(s) ds \geq \int_0^t \left(\frac{c}{2} + \left(\frac{\theta^{\frac{2}{3p+1}}}{(p+1)(\ECal(0))^{\frac{p-1}{3p+1}}}\right)^{\frac{3p+1}{4}} s^{\frac{3p+1}{4}}\right)^2 ds. 
\end{equation*}
In addition, $\inf_{0 \leq s \leq t} \|\nabla \Phi(x(s))\|^{\frac{p+1}{p}} = (\inf_{0 \leq s \leq t} \|\nabla \Phi(x(s))\|^2)^{\frac{p+1}{2p}}$. Putting these pieces together yields 
\begin{equation*}
\inf_{0 \leq s \leq t} \|\nabla \Phi(x(s))\|^2 \leq \left(\frac{\theta^{-\frac{1}{p}}\ECal(0)}{\int_0^t (\frac{c}{2} + (\frac{\theta^{\frac{2}{3p+1}}}{(p+1)(\ECal(0))^{\frac{p-1}{3p+1}}})^{\frac{3p+1}{4}} s^{\frac{3p+1}{4}})^2 ds}\right)^{\frac{2p}{p+1}} = O(t^{-3p}).  
\end{equation*}
This completes the proof. 

\subsection{Discussion}\label{subsec:Lyapunov-dis}
It is useful to compare our approach to approaches based on time scaling~\citep{Attouch-2019-Fast,Attouch-2019-Time,Attouch-2021-Fast,Attouch-2021-ADMM} and quasi-gradient methods~\citep{Begout-2015-Damped,Attouch-2020-Fast}.

\paragraph{Regularity condition.} Why is proving the existence and uniqueness of a global solution of the closed-loop control system in Eq.~\eqref{Sys:general} and Eq.~\eqref{Sys:choice-feedback} hard without the regularity condition? Our system differs from the existing systems in three respects: (i) the appearance of both $\ddot{x}$ and $\dot{x}$; (ii) the algebraic equation that links $\lambda$ and $\nabla\Phi(x)$; and (iii) the evolution dynamics depends on $\lambda$ via $a$ and $\dot{a}$. From a technical point of view, the combination of these features makes it challenging to control a lower bound on gradient norm $\|\nabla\Phi(x(\cdot))\|$ or an upper bound on the feedback control $\lambda(\cdot)$ on the local interval. In sharp contrast, $\|\nabla\Phi(x(t))\| \geq \|\nabla\Phi(x(0))\|e^{-t}$ or $\lambda(t) \leq \lambda(0)e^t$ can readily be derived for the Levenberg-Marquardt regularized system in~\citet[Corollary~3.3]{Attouch-2011-Continuous} and even the closed-loop control systems without inertia in~\citet[Theorem~5.2]{Attouch-2013-Global} and~\citet[Theorem~2.4]{Attouch-2016-Dynamic}. Thus, we can not exclude the case of $\lambda(t) \rightarrow +\infty$ on the bounded interval without the regularity condition and we accordingly fail to establish global existence and uniqueness. We consider it an interesting open problem to derive the regularity condition rather than imposing it as an assumption. 

\paragraph{Infinite-dimensional setting.} It is promising to study our system using the techniques developed by~\citet{Attouch-2016-Fast} for an infinite-dimensional setting. Our convergence analysis can in fact be extended directly, yielding the same rate of $O(1/t^{(3p+1)/2})$ in terms of objective function gap and $O(1/t^{3p})$ in terms of squared gradient norm in the Hilbert-space setting. However, the weak convergence of the solution trajectories is another matter. Note that~\citet{Attouch-2016-Fast} studied the following open-loop system with the parameters $(\alpha, \beta)$:
\begin{equation*}
\ddot{x}(t) + \frac{\alpha}{t}\dot{x}(t) + \beta\nabla^2\Phi(x(t))\dot{x}(t) + \nabla\Phi(x(t)) = 0.  
\end{equation*} 
The condition $\alpha>3$ is crucial for proving weak convergence of solution trajectories and establishing strong convergence in various practical situations. Indeed, the convergence of the solution trajectory has not been established so far when $\alpha=3$ (except in the one-dimensional case with $\beta=0$; see~\citet{Attouch-2019-Rate} for the reference). Unfortunately, when $c=0$ and $p=1$, the closed-loop control system in Eq.~\eqref{Sys:general} and Eq.~\eqref{Sys:choice-feedback} becomes
\begin{equation*}
\ddot{x}(t) + \frac{3}{t}\dot{x}(t) + \theta\nabla^2\Phi(x(t))\dot{x}(t) + \left(\theta + \frac{\theta}{t}\right)\nabla\Phi(x(t)) = 0. 
\end{equation*}  
The asymptotic damping coefficient $\frac{3}{t}$ does not satisfy the aforementioned condition in~\citet{Attouch-2016-Fast}, leaving doubt as to whether weak convergence holds true for the closed-loop control system in Eq.~\eqref{Sys:general} and Eq.~\eqref{Sys:choice-feedback}.

\paragraph{Time scaling.} In the context of non-autonomous dissipative systems, time scaling is a simple yet universally powerful tool to accelerate the convergence of solution trajectories~\citep{Attouch-2019-Fast,Attouch-2019-Time,Attouch-2021-Fast,Attouch-2021-ADMM}. Considering the general inertial gradient system in Eq.~\eqref{Sys:general}:
\begin{equation*}
\ddot{x}(t) + \alpha(t)\dot{x}(t) + \beta(t)\nabla^2\Phi(x(t))\dot{x}(t) + b(t)\nabla\Phi(x(t)) = 0, 
\end{equation*}  
the effect of time scaling is characterized by the coefficient parameter $b(t)$ which comes in as a factor of $\nabla\Phi(x(t))$. In~\citet{Attouch-2019-Fast,Attouch-2019-Time}, the authors conducted an in-depth study of the convergence of this above system without Hessian-driven damping ($\beta=0$). For the case $\alpha(t) = \frac{\alpha}{t}$, the convergence rate turns out to be $O(\frac{1}{t^2 b(t)})$ under certain conditions on the scalar $\alpha$ and $b(\cdot)$. Thus, a clear improvement can be achieved by taking $b(t) \rightarrow +\infty$. This demonstrates the power and potential of time scaling, as further evidenced by recent work on systems with Hessian damping~\citep{Attouch-2021-Fast} and other systems which are associated with the augmented Lagrangian formulation of the affine constrained convex minimization problem~\citep{Attouch-2021-ADMM}.

Comparing to our closed-loop damping approach, the time scaling technique is based on an open-loop control regime, and indeed $b(t)$ is chosen by hand. In contrast, $\lambda(t)$ in our system is determined by the gradient of $\nabla \Phi(x(t))$ via the algebraic equation, and the evolution dynamics depend on $\lambda$ via $a$ and $\dot{a}$. The time scaling methodology accordingly does not capture the continuous-time interpretation of optimal acceleration in high-order optimization~\citep{Monteiro-2013-Accelerated,Gasnikov-2019-Optimal,Jiang-2019-Optimal,Bubeck-2019-Near}. In contrast, our algebraic equation provides a rigorous justification for the large-step condition in the existing algorithms~\citep{Monteiro-2013-Accelerated,Gasnikov-2019-Optimal,Jiang-2019-Optimal,Bubeck-2019-Near} when $p \geq 2$ and demonstrates the fundamental role that the feedback control plays in optimal acceleration, a role clarified by the continuous-time perspective.      

\paragraph{Quasi-gradient approach and Kurdyka-Lojasiewicz (KL) theory.} The quasi-gradient approach to inertial gradient systems were developed in~\citet{Begout-2015-Damped} and recently applied by~\citet{Attouch-2020-Fast} to analyze inertial dynamics with closed-loop control of the velocity. Recall that a vector field $F$ is called a quasi-gradient for a function $E$ if it has the same singular point as $E$ and if the angle between the field $F$ and the gradient $\nabla E$ remains acute and bounded away from $\frac{\pi}{2}$ (see the references~\citep{Huang-2006-Gradient,Chergui-2008-Convergence,Chill-2010-Gradient,Barta-2012-Every,Barta-2016-Convergence} for further geometrical interpretation). 

Recent results in~\citet[Theorem~3.2]{Begout-2015-Damped} and~\citet[Theorem~7.2]{Attouch-2020-Fast} have suggested that the convergence properties for the bounded trajectories of quasi-gradient systems have been established if the function $E$ is KL~\citep{Kurdyka-1998-Gradients,Bolte-2010-Characterizations}. In~\citet{Attouch-2020-Fast}, the authors considered two closed-loop velocity control systems with a damping potential $\phi$: 
\begin{align}
& \ddot{x}(t) + \nabla\phi(\dot{x}(t)) + \nabla\Phi(x(t)) = 0. \label{Sys:QG-first} \\
& \ddot{x}(t) + \nabla\phi(\dot{x}(t)) + \beta\nabla^2\Phi(x(t))\dot{x}(t) + \nabla\Phi(x(t)) = 0. \label{Sys:QG-second}
\end{align}
They proposed to use the Hamiltonian formulation of these systems and accordingly defined a function $E_\lambda$ for $(x, v) =  (x, \dot{x}(t))$ by 
\begin{equation*}
E_\eta(x, v) := \frac{1}{2}\|v\|^2 + \Phi(x) + \eta\langle\nabla\Phi(x), v\rangle.  
\end{equation*}
If $\phi$ satisfies some certain growth conditions (see~\citet[Theorem~7.3 and~9.2]{Attouch-2020-Fast}), the systems in Eq.~\eqref{Sys:QG-first} and Eq.~\eqref{Sys:QG-second} both have a quasi-gradient structure for $E_\eta$ for sufficiently small $\eta > 0$. This provides an elegant framework for analyzing the convergence properties of the systems in the form of Eq.~\eqref{Sys:QG-first} and Eq.~\eqref{Sys:QG-second} with specific damping potentials.  

Why is analyzing our system hard using the quasi-gradient approach? Our system differs from the systems in Eq.~\eqref{Sys:QG-first} and Eq.~\eqref{Sys:QG-second} in two aspects: (i) the closed-loop control law is designed for the gradient of $\Phi$ rather than the velocity $\dot{x}$; (ii) the damping coefficients are time dependent, depending on $\lambda$ via $a$ and $\dot{a}$, and do not have an analytic form when $p \geq 2$. Considering the first-order systems in Eq.~\eqref{Sys:FO} and Eq.~\eqref{Sys:FO-inverse}, we find that $F$ is a time-dependent vector field which can not be tackled by the current quasi-gradient approach. We consider it an interesting open problem to develop a quasi-gradient approach for analyzing our system. 

\section{Implicit Time Discretization and Optimal Acceleration}\label{sec:optimal-algorithm}
In this section, we propose two conceptual algorithmic frameworks that arise via implicit time discretization of the closed-loop  system in Eq.~\eqref{Sys:FO} and Eq.~\eqref{Sys:FO-inverse}. Our approach demonstrates the importance of the large-step condition~\citep{Monteiro-2013-Accelerated} for optimal acceleration, interpreting it as the discretization of the algebraic equation. This allows us to further clarify why this condition is unnecessary for first-order optimization algorithms in the case of $p=1$ (the algebraic equation disappears). With an approximate tensor subroutine~\citep{Nesterov-2019-Implementable}, we derive two class of $p$-th order tensor algorithms, one of which recovers existing optimal $p$-th order tensor algorithms~\citep{Gasnikov-2019-Optimal,Jiang-2019-Optimal,Bubeck-2019-Near} and the other of which leads to a new optimal $p$-th order tensor algorithm. 

\subsection{Conceptual algorithmic frameworks}\label{subsec:framework}
We study two conceptual algorithmic frameworks which are derived by implicit time discretization of Eq.~\eqref{Sys:FO} with $c = 0$ and Eq.~\eqref{Sys:FO-inverse} with $c = 2$. 
\paragraph{First algorithmic framework.} By the definition of $a(t)$, we have $(\dot{a}(t))^2 = \lambda(t)a(t)$ and $a(0)=0$. This implies an equivalent formulation of the first-order system in Eq.~\eqref{Sys:FO} with $c = 0$ as follows,  
\begin{align*}
& \left\{\begin{array}{ll}
& \dot{v}(t) + \dot{a}(t)\nabla \Phi(x(t)) = 0 \\ 
& \dot{x}(t) + \frac{\dot{a}(t)}{a(t)}(x(t) - v(t)) + \frac{(\dot{a}(t))^2}{a(t)}\nabla\Phi(x(t)) = 0 \\
& a(t) = \frac{1}{4}(\int_0^t \sqrt{\lambda(s)} ds)^2 \\
& (\lambda(t))^p\|\nabla\Phi(x(t))\|^{p-1} = \theta \\ 
& (x(0), v(0)) = (x_0, v_0)    
\end{array}\right. \\
\Longleftrightarrow & 
\left\{\begin{array}{ll}
& \dot{v}(t) + \dot{a}(t)\nabla \Phi(x(t)) = 0 \\ 
& a(t)\dot{x}(t) + \dot{a}(t)(x(t) - v(t)) + \lambda(t)a(t)\nabla\Phi(x(t)) = 0 \\
& (\dot{a}(t))^2 = \lambda(t)a(t) \\
& (\lambda(t))^p\|\nabla\Phi(x(t))\|^{p-1} = \theta \\ 
& (x(0), v(0), a(0)) = (x_0, v_0, 0).   
\end{array}\right.  
\end{align*}
We define discrete-time sequences, $\{(x_k, v_k, \lambda_k, a_k, A_k)\}_{k \geq 0}$, that correspondx to the continuous-time sequences $\{(x(t), v(t), \lambda(t), \dot{a}(t), a(t))\}_{t \geq 0}$. By implicit time discretization, we have
\begin{equation}\label{DSys:FO}
\left\{\begin{array}{ll}
& v_{k+1} - v_k + a_{k+1}\nabla\Phi(x_{k+1}) = 0 \\ 
& A_{k+1}(x_{k+1} - x_k) + a_{k+1}(x_k - v_k) + \lambda_{k+1}A_{k+1}\nabla\Phi(x_{k+1}) = 0 \\
& (a_{k+1})^2 = \lambda_{k+1}(A_k + a_{k+1}), \ a_{k+1} = A_{k+1} - A_k, \ a_0 = 0 \\
& (\lambda_{k+1})^p\|\nabla\Phi(x_{k+1})\|^{p-1} = \theta.   
\end{array}\right.
\end{equation}
By introducing a new variable $\tilde{v}_k = \frac{A_k}{A_k + a_{k+1}}x_k + \frac{a_{k+1}}{A_k + a_{k+1}}v_k$, the second and fourth lines of Eq.~\eqref{DSys:FO} can be equivalently reformulated as follows: 
\begin{equation*}
\lambda_{k+1}\nabla\Phi(x_{k+1}) + x_{k+1} - \tilde{v}_k = 0, \qquad \lambda_{k+1}\|x_{k+1} - \tilde{v}_k\|^{p-1} = \theta. 
\end{equation*}
We propose to solve these two equations inexactly and replace $\nabla \Phi(x_{k+1})$ by a sufficiently accurate approximation in the first line of Eq.~\eqref{DSys:FO}. In particular, the first equation can be equivalently written in the form of $\lambda_{k+1}w_{k+1} + x_{k+1} - \tilde{v}_k = 0$, where $w_{k+1} \in \{\nabla\Phi(x_{k+1})\}$. This motivates us to introduce a relative error tolerance~\citep{Solodov-1999-Approximate,Monteiro-2010-Complexity}. In particular, we define the $\varepsilon$-subdifferential of a function $f$ by 
\begin{equation}\label{Def:SD}
\partial_\epsilon f(x) := \{w \in \br^d \mid f(y) \geq f(x) + \langle y - x, w\rangle - \epsilon, \ \forall y \in \br^d\}, 
\end{equation}
and find $\lambda_{k+1} > 0$ and a triple $\left(x_{k+1}, w_{k+1}, \varepsilon_{k+1}\right)$ such that $\|\lambda_{k+1}w_{k+1} + x_{k+1} - \tilde{v}_k\|^2 + 2\lambda_{k+1}\epsilon_{k+1} \leq  \sigma^2\|x_{k+1} - \tilde{v}_k\|^2$, where $w_{k+1} \in \partial_{\epsilon_{k+1}} \Phi(x_{k+1})$. To this end, $w_{k+1}$ is a sufficiently accurate approximation of $\nabla \Phi(x_{k+1})$. Moreover, the second equation can be relaxed to $\lambda_{k+1}\|x_{k+1} - \tilde{v}_k\|^{p-1} \geq \theta$.  
\begin{remark}
We present our first conceptual algorithmic framework formally in Algorithm~\ref{Algorithm:CAF-I}. This scheme includes the \textsf{large-step A-HPE} framework~\citep{Monteiro-2013-Accelerated} as a special instance. Indeed, it reduces to the \textsf{large-step A-HPE} framework if we set $y = \tilde{y}$ and $p = 2$ and change the notation of $(x, v, \tilde{v}, w)$ to $(y, x, \tilde{x}, v)$ in~\citet{Monteiro-2013-Accelerated}.
\end{remark}
\begin{algorithm}[!t]
\begin{algorithmic}\caption{Conceptual Algorithmic Framework I}\label{Algorithm:CAF-I}
\STATE \textbf{STEP 0:}  Let $x_0, v_0 \in \br^d$, $\sigma \in (0, 1)$ and $\theta > 0$ be given, and set $A_0 = 0$ and $k = 0$. 
\STATE \textbf{STEP 1:} If $0 = \nabla\Phi(x_k)$, then \textbf{stop}. 
\STATE \textbf{STEP 2:} Otherwise, compute $\lambda_{k+1} > 0$ and a triple $(x_{k+1}, w_{k+1}, \epsilon_{k+1}) \in \br^d \times \br^d \times (0, +\infty)$ such that
\begin{align*}
& w_{k+1} \in \partial_{\epsilon_{k+1}} \Phi(x_{k+1}), \\
& \|\lambda_{k+1}w_{k+1} + x_{k+1} - \tilde{v}_k\|^2 + 2\lambda_{k+1}\epsilon_{k+1} \leq \sigma^2\|x_{k+1} - \tilde{v}_k\|^2, \\
& \lambda_{k+1}\|x_{k+1} - \tilde{v}_k\|^{p-1} \geq \theta.
\end{align*}
where $\tilde{v}_k = \frac{A_k}{A_k + a_{k+1}}x_k + \frac{a_{k+1}}{A_k + a_{k+1}}v_k$ and $a_{k+1}^2 = \lambda_{k+1}(A_k + a_{k+1})$. 
\STATE \textbf{STEP 3:} Compute $A_{k+1} = A_k + a_{k+1}$ and $v_{k+1} = v_k - a_{k+1}w_{k+1}$. 
\STATE \textbf{STEP 4:}  Set $k \leftarrow k+1$, and go to \textbf{STEP 1}. 
\end{algorithmic}
\end{algorithm}
\paragraph{Second algorithmic framework.} By the definition of $\gamma(t)$, we have $(\frac{\dot{\gamma}(t)}{\gamma(t)})^2 = \lambda(t)\gamma(t)$ and $\gamma(0)=1$. This implies an equivalent formulation of the first-order system in Eq.~\eqref{Sys:FO-inverse} with $c = 2$: 
\begin{align*}
& \left\{\begin{array}{ll}
& \dot{v}(t) - \frac{\dot{\gamma}(t)}{\gamma^2(t)}\nabla \Phi(x(t)) = 0 \\
& \dot{x}(t) - \frac{\dot{\gamma}(t)}{\gamma(t)}(x(t) - v(t)) +  \frac{(\dot{\gamma}(t))^2}{(\gamma(t))^3}\nabla\Phi(x(t)) = 0 \\ 
& \gamma(t) = 4(\int_0^t \sqrt{\lambda(s)} ds + c)^{-2} \\
& (\lambda(t))^p\|\nabla\Phi(x(t))\|^{p-1} = \theta \\
& (x(0), v(0)) = (x_0, v_0)
\end{array}\right. \\
\Longleftrightarrow & 
\left\{\begin{array}{ll}
& \dot{v}(t) + \frac{\alpha(t)}{\gamma(t)}\nabla \Phi(x(t)) = 0 \\ 
& \dot{x}(t) + \alpha(t)(x(t) - v(t)) + \lambda(t)\nabla\Phi(x(t)) = 0 \\
& (\alpha(t))^2 = \lambda(t)\gamma(t), \ \dot{\gamma}(t) + \alpha(t)\gamma(t) = 0 \\
& (\lambda(t))^p\|\nabla\Phi(x(t))\|^{p-1} = \theta \\ 
& (x(0), v(0), \gamma(0)) = (x_0, v_0, 1).  
\end{array}\right.
\end{align*}
We define discrete-time sequences, $\{(x_k, v_k, \lambda_k, \alpha_k, \gamma_k)\}_{k \geq 0}$, that correspondx to the continuous-time sequences $\{(x(t), v(t), \lambda(t), \alpha(t), \gamma(t))\}_{t \geq 0}$. From implicit time discretization, we have
\begin{equation}\label{DSys:FO-inverse}
\left\{\begin{array}{ll}
& v_{k+1} - v_k + \frac{\alpha_{k+1}}{\gamma_{k+1}}\nabla\Phi(x_{k+1}) = 0 \\ 
& x_{k+1} - x_k + \alpha_{k+1}(x_k - v_k) + \lambda_{k+1}\nabla\Phi(x_{k+1}) = 0 \\
& (\alpha_{k+1})^2 = \lambda_{k+1}\gamma_{k+1}, \ \gamma_{k+1} = (1-\alpha_{k+1})\gamma_k, \ \gamma_0 = 1 \\
& (\lambda_{k+1})^p\|\nabla\Phi(x_{k+1})\|^{p-1} = \theta.    
\end{array}\right.
\end{equation}
By introducing a new variable $\tilde{v}_k = (1 - \alpha_{k+1})x_k + \alpha_{k+1}v_k$, the second and fourth lines of Eq.~\eqref{DSys:FO} can be equivalently reformulated as  
\begin{equation*}
\lambda_{k+1}\nabla\Phi(x_{k+1}) + x_{k+1} - \tilde{v}_k = 0, \qquad \lambda_{k+1}\|x_{k+1} - \tilde{v}_k\|^{p-1} = \theta. 
\end{equation*}
By the same approximation strategy as before, we solve these two equations inexactly and replace $\nabla \Phi(x_{k+1})$ by a sufficiently accurate approximation in the first line of Eq.~\eqref{DSys:FO-inverse}.
\begin{remark}
We present our second conceptual algorithmic framework formally in Algorithm~\ref{Algorithm:CAF-II}. To the best of our knowledge, this scheme does not appear in the literature and is based on an estimate sequence which differs from the one used in Algorithm~\ref{Algorithm:CAF-I}. However, from a continuous-time perspective, these two algorithms are equivalent up to a constant $c>0$, demonstrating that they achieve the same convergence rate in terms of both objective function gap and squared gradient norm. 
\end{remark}
\begin{algorithm}[!t]
\begin{algorithmic}\caption{Conceptual Algorithmic Framework II}\label{Algorithm:CAF-II}
\STATE \textbf{STEP 0:}  Let $x_0, v_0 \in \br^d$, $\sigma \in (0, 1)$ and $\theta  > 0$ be given, and set $\gamma_0 = 1$ and $k = 0$. 
\STATE \textbf{STEP 1:} If $0 = \nabla\Phi(x_k)$, then \textbf{stop}. 
\STATE \textbf{STEP 2:} Otherwise, compute $\lambda_{k+1} > 0$ and a triple $(x_{k+1}, w_{k+1}, \epsilon_{k+1}) \in \br^d \times \br^d \times (0, +\infty)$ such that
\begin{align*}
& w_{k+1} \in \partial_{\epsilon_{k+1}} \Phi(x_{k+1}), \\ 
& \|\lambda_{k+1}w_{k+1} + x_{k+1} - \tilde{v}_k\|^2 + 2\lambda_{k+1}\epsilon_{k+1} \leq \sigma^2\|x_{k+1} - \tilde{v}_k\|^2, \\ 
& \lambda_{k+1}\|x_{k+1} - \tilde{v}_k\|^{p-1} \geq \theta. 
\end{align*}
where $\tilde{v}_k = (1 - \alpha_{k+1})x_k + \alpha_{k+1}v_k$ and $(\alpha_{k+1})^2 = \lambda_{k+1}(1-\alpha_{k+1})\gamma_k$. 
\STATE \textbf{STEP 3:} Compute $\gamma_{k+1} = (1 - \alpha_{k+1})\gamma_k$ and $v_{k+1} = v_k - \frac{\alpha_{k+1}}{\gamma_{k+1}}w_{k+1}$. 
\STATE \textbf{STEP 4:}  Set $k \leftarrow k+1$, and go to \textbf{STEP 1}. 
\end{algorithmic}
\end{algorithm} 
\paragraph{Comparison with G\"{u}ler's accelerated proximal point algorithm.} Algorithm~\ref{Algorithm:CAF-II} is related to G\"{u}ler’s accelerated proximal point algorithm (APPA)~\citep{Guler-1992-New}, which combines Nesterov acceleration~\citep{Nesterov-1983-Method} and Martinet's PPA~\citep{Martinet-1970-Regularisation,Martinet-1972-Determination}. Indeed, the analogs of update formulas $\tilde{v}_k = (1 - \alpha_{k+1})x_k + \alpha_{k+1}v_k$ and $(\alpha_{k+1})^2 = \lambda_{k+1}(1-\alpha_{k+1})\gamma_k$ appear in G\"{u}ler's algorithm, suggesting similar evolution dynamics. However, G\"{u}ler's APPA does not specify how to choose $\{\lambda_k\}_{k \geq 0}$ but regard them as the parameters, while our algorithm links its choice with the gradient norm of $\Phi$ via the large-step condition. 

Such difference is emphasized by recent studies on the continuous-time perspective of G\"{u}ler's APPA~\citep{Attouch-2019-Time,Attouch-2019-Fast}. More specifically,~\citet{Attouch-2019-Fast} proved that G\"{u}ler's APPA can be interpreted as the implicit time discretization of an open-loop inertial gradient system (see~\citet[Eq.~(53)]{Attouch-2019-Fast}): 
\begin{equation*}
\ddot{x}(t) + \left(g(t) - \frac{\dot{g}(t)}{g(t)}\right)\dot{x}(t) + \beta(t)\nabla\Phi(x(t)) = 0. 
\end{equation*}
where $g_k$ and $\beta_k$ in their notation correspond to $\alpha_k$ and $\lambda_k$ in Algorithm~\ref{Algorithm:CAF-II}. By using $\gamma_{k+1} - \gamma_k = -\alpha_{k+1}\gamma_k$ and standard continuous-time arguments, we have $g(t) = -\frac{\dot{\gamma}(t)}{\gamma(t)}$ and $\beta(t) = \lambda(t) = \frac{(\dot{\gamma}(t))^2}{(\gamma(t))^3}$. By further defining $a(t) = \frac{1}{\gamma(t)}$, the above system is in the form of 
\begin{equation}\label{Sys:Guler}
\ddot{x}(t) + \left(\frac{2\dot{a}(t)}{a(t)} - \frac{\ddot{a}(t)}{\dot{a}(t)}\right)\dot{x}(t) + \left(\frac{(\dot{a}(t))^2}{a(t)}\right)\nabla\Phi(x(t)) = 0,  
\end{equation}
where $a$ explicitly depends on the variable $\lambda$ as follows,
\begin{equation*}
\begin{array}{l}
a(t) = \frac{1}{4}(\int_0^t \sqrt{\lambda(s)} ds + 2)^2.   
\end{array} 
\end{equation*}
Compared to our closed-loop control system, the one in Eq.~\eqref{Sys:Guler} is open-loop without the algebra equation and does not contain Hessian-driven damping. The coefficient for the gradient term is also different, standing for different time rescaling in the evolution dynamics~\citep{Attouch-2021-Fast}.

\subsection{Complexity analysis}
We study the iteration complexity of Algorithm~\ref{Algorithm:CAF-I} and~\ref{Algorithm:CAF-II}. Our analysis is largely motivated by the aforementioned continuous-time analysis, simplifying the analysis in~\citet{Monteiro-2013-Accelerated} for the case of $p=2$ and generalizing it to the case of $p>2$ in a systematic manner (see Theorem~\ref{Theorem:CAFI-Main} and Theorem~\ref{Theorem:CAFII-Main}). Throughout this subsection, $x^\star$ denotes the projection of $v_0$ onto the solution set of $\Phi$.

\paragraph{Algorithm~\ref{Algorithm:CAF-I}.} We start with the presentation of our main results for Algorithm~\ref{Algorithm:CAF-I}, which in fact generalizes~\citet[Theorem~4.1]{Monteiro-2013-Accelerated} to the case of $p>2$. 
\begin{theorem}\label{Theorem:CAFI-Main}
For every integer $k \geq 1$, the objective function gap satisfies
\begin{equation*}
\Phi(x_k) - \Phi(x^\star) = O(k^{-\frac{3p+1}{2}}), 
\end{equation*}
and 
\begin{equation*}
\inf_{1 \leq i \leq k} \|w_i\|^2 = O(k^{-3p}), \quad \inf_{1 \leq i \leq k} \epsilon_i = O(k^{-\frac{3p+3}{2}}). 
\end{equation*}
\end{theorem}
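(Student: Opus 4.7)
The plan is to mirror the continuous-time analysis by constructing a discrete Lyapunov function $\ECal_k = A_k(\Phi(x_k) - \Phi(x^\star)) + \tfrac{1}{2}\|v_k - x^\star\|^2$, proving a descent inequality parallel to Lemma~\ref{Lemma:Lyapunov-first}, and then exploiting the large-step condition together with H\"{o}lder's inequality to extract a lower bound on $A_k$ that mimics Lemma~\ref{Lemma:Lyapunov-Key}.

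First I would derive the discrete descent inequality. Combining two $\epsilon$-subgradient inequalities for $w_{k+1} \in \partial_{\epsilon_{k+1}}\Phi(x_{k+1})$ at $x_k$ and $x^\star$, weighted by $A_k$ and $a_{k+1}$, yields
\begin{equation*}
A_{k+1}\Phi(x_{k+1}) - A_k\Phi(x_k) - a_{k+1}\Phi(x^\star) \leq \langle w_{k+1}, A_{k+1}x_{k+1} - A_k x_k - a_{k+1}x^\star\rangle + A_{k+1}\epsilon_{k+1}.
\end{equation*}
The identity $A_{k+1}\tilde v_k = A_k x_k + a_{k+1}v_k$ rewrites the inner product as $A_{k+1}\langle w_{k+1}, x_{k+1}-\tilde v_k\rangle + a_{k+1}\langle w_{k+1}, v_k - x^\star\rangle$. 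Expanding the approximation condition produces $\langle w_{k+1}, x_{k+1} - \tilde v_k\rangle \leq -\tfrac{1}{2}\lambda_{k+1}\|w_{k+1}\|^2 - \tfrac{1-\sigma^2}{2\lambda_{k+1}}\|x_{k+1} - \tilde v_k\|^2 - \epsilon_{k+1}$, while the update $v_{k+1} = v_k - a_{k+1}w_{k+1}$ turns the second inner product into $\tfrac{1}{2}(\|v_k - x^\star\|^2 - \|v_{k+1} - x^\star\|^2) + \tfrac{1}{2}a_{k+1}^2\|w_{k+1}\|^2$. The identity $a_{k+1}^2 = \lambda_{k+1}A_{k+1}$ is exactly what causes the $\|w_{k+1}\|^2$ contributions to cancel, and the $A_{k+1}\epsilon_{k+1}$ terms cancel as well, leaving the clean bound
\begin{equation*}
\ECal_{k+1} + \frac{(1-\sigma^2)A_{k+1}}{2\lambda_{k+1}}\|x_{k+1} - \tilde v_k\|^2 \leq \ECal_k.
\end{equation*}

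Next I would telescope to get $\sum_{i=1}^k \tfrac{A_i}{\lambda_i}\|x_i - \tilde v_{i-1}\|^2 \leq \tfrac{2}{1-\sigma^2}\ECal_0$ and transplant the Bihari-type argument from Lemma~\ref{Lemma:Lyapunov-Key}. The large-step condition $\lambda_i\|x_i - \tilde v_{i-1}\|^{p-1} \geq \theta$ bounds each summand below by $\theta^{2/(p-1)} A_i \lambda_i^{-(p+1)/(p-1)}$. A H\"{o}lder application combined with the discrete counterpart $\sum_{i=1}^k \sqrt{\lambda_i} \leq 2\sqrt{A_k}$ (which follows from $\sqrt{\lambda_i} = a_i/\sqrt{A_i} \leq 2(\sqrt{A_i} - \sqrt{A_{i-1}})$, using $a_i = A_i - A_{i-1}$ and the monotonicity of $A_k$) then produces $\sum_{i=1}^k A_i^{(p-1)/(3p+1)} \leq C A_k^{(p+1)/(3p+1)}$. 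The main obstacle I anticipate is the discrete Bihari--Gr\"{o}nwall step that converts this recursion into a polynomial lower bound on $A_k$: setting $S_k := \sum_{i=1}^k A_i^{(p-1)/(3p+1)}$ gives $S_k - S_{k-1} \geq c S_k^{(p-1)/(p+1)}$, which by integral comparison implies $S_k^{2/(p+1)} \geq c'k$ and hence $A_k \geq C k^{(3p+1)/2}$. This furnishes $\Phi(x_k) - \Phi(x^\star) \leq \ECal_0/A_k = O(k^{-(3p+1)/2})$.

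Finally, for the gradient and error rates, applying the triangle inequality to the approximation condition gives $(1-\sigma)\|x_i - \tilde v_{i-1}\| \leq \lambda_i\|w_i\| \leq (1+\sigma)\|x_i - \tilde v_{i-1}\|$ and $\epsilon_i \leq \tfrac{\sigma^2}{2\lambda_i}\|x_i - \tilde v_{i-1}\|^2$. Substituting these into the telescoped descent bound and using the large-step condition to eliminate $\lambda_i$ in favor of $\|w_i\|$ yields $\sum_{i=1}^k A_i \|w_i\|^{(p+1)/p} \leq C\ECal_0$ and $\sum_{i=1}^k A_i \epsilon_i \leq C\ECal_0$. Since $A_i \geq C i^{(3p+1)/2}$ implies $\sum_{i=1}^k A_i \geq C' k^{(3p+3)/2}$, we obtain $\inf_{1 \leq i \leq k}\|w_i\|^{(p+1)/p} = O(k^{-(3p+3)/2})$ and $\inf_{1 \leq i \leq k}\epsilon_i = O(k^{-(3p+3)/2})$; rearranging the former to $\inf_i\|w_i\|^2 = O(k^{-3p})$ delivers the remaining two rates in the theorem.
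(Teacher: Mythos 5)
Your proposal is correct and follows essentially the same path as the paper: a discrete Lyapunov function $\ECal_k = A_k(\Phi(x_k) - \Phi(x^\star)) + \tfrac12\|v_k - x^\star\|^2$ whose decrease telescopes to $\sum_i \tfrac{A_i}{\lambda_i}\|x_i - \tilde{v}_{i-1}\|^2 \leq \tfrac{2\ECal_0}{1-\sigma^2}$, a lower bound $A_k \gtrsim k^{(3p+1)/2}$ extracted from the large-step condition via H\"older plus a discrete Bihari--LaSalle step (this is exactly the paper's Lemma~\ref{Lemma:CAFI-Key}, where the convexity inequality $\tfrac{1-t^{2/(p+1)}}{1-t} \geq \tfrac{2}{p+1}$ makes your \emph{integral comparison} rigorous), and finally the conversion to squared-gradient rates through $\lambda_i\|w_i\|^{(p-1)/p} \geq \theta^{1/p}(1-\sigma)^{(p-1)/p}$. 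The only organizational difference is that the paper imports the Lyapunov-decrease and telescoped-sum facts directly from Monteiro--Svaiter (their Theorem 3.6, Lemma 3.7, Proposition 3.9) rather than rederiving them, whereas you reconstruct them from scratch (mirroring what the paper does explicitly for Algorithm II in Lemmas~\ref{Lemma:CAFII-Basic-first}--\ref{Lemma:CAFII-Basic-third}) and bound $\sum_i A_i\|w_i\|^{(p+1)/p}$ and $\sum_i A_i\epsilon_i$ directly, so your account is more self-contained but not a different route.
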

Note that the only difference between Algorithm~\ref{Algorithm:CAF-I} and \textsf{large-step A-HPE} framework in~\citet{Monteiro-2013-Accelerated} is the order in the algebraic equation. As such, many of the technical results derived in~\cite{Monteiro-2013-Accelerated} also hold for Algorithm~\ref{Algorithm:CAF-I}; more specifically, ~\citet[Theorem~3.6, Lemma~3.7 and Proposition~3.9]{Monteiro-2013-Accelerated}. 

We also present a technical lemma that provides a lower bound for $A_k$. 
\begin{lemma}\label{Lemma:CAFI-Key}
For $p \geq 1$ and every integer $k \geq 1$, we have
\begin{equation*}
A_k \geq \left(\frac{\theta(1-\sigma^2)^{\frac{p-1}{2}}}{(p+1)^{\frac{3p+1}{2}}\|v_0-x^\star\|^{p-1}}\right) k^{\frac{3p+1}{2}}. 
\end{equation*}
\end{lemma}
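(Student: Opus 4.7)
This bound is the discrete counterpart of Lemma~\ref{Lemma:Lyapunov-Key}, and the plan is to mirror that continuous-time argument step by step. The starting point is the discrete energy inequality that Algorithm~\ref{Algorithm:CAF-I} inherits from the \textsf{large-step A-HPE} framework: because the two schemes differ only through the generalized large-step condition $\lambda_{k+1}\|x_{k+1} - \tilde{v}_k\|^{p-1} \geq \theta$, \citet[Theorem~3.6]{Monteiro-2013-Accelerated} applies verbatim and yields $\sum_{i=1}^k (A_i/\lambda_i)\|x_i - \tilde{v}_{i-1}\|^2 \leq \|v_0 - x^\star\|^2/(1-\sigma^2)$. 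Using the large-step condition in the form $\|x_i - \tilde{v}_{i-1}\|^2 \geq (\theta/\lambda_i)^{2/(p-1)}$ converts this into
\begin{equation*}
\sum_{i=1}^k A_i \lambda_i^{-\frac{p+1}{p-1}} \leq \frac{\|v_0 - x^\star\|^2}{(1-\sigma^2)\,\theta^{2/(p-1)}},
\end{equation*}
which plays the role of the bound $\int_0^t a(s)\lambda(s)^{-(p+1)/(p-1)}ds \leq \ECal(0)\theta^{-2/(p-1)}$ used in the continuous proof.

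Next I would establish the discrete analog of $\int_0^t \sqrt{\lambda(s)}\,ds \leq 2\sqrt{a(t)}$. Since $a_i^2 = \lambda_i A_i$, one has $\sqrt{\lambda_i} = a_i/\sqrt{A_i}$; concavity of $\sqrt{\cdot}$ gives $\sqrt{A_i} - \sqrt{A_{i-1}} \geq a_i/(2\sqrt{A_i})$, and telescoping with $A_0 = 0$ yields $\sum_{i=1}^k \sqrt{\lambda_i} \leq 2\sqrt{A_k}$. With these two ingredients in hand, H\"older's inequality with conjugate exponents $(3p+1)/(p-1)$ and $(3p+1)/(2p+2)$ produces
\begin{equation*}
\sum_{i=1}^k A_i^{\frac{p-1}{3p+1}} = \sum_{i=1}^k \bigl(A_i \lambda_i^{-(p+1)/(p-1)}\bigr)^{\frac{p-1}{3p+1}} \lambda_i^{\frac{p+1}{3p+1}} \leq B\, A_k^{\frac{p+1}{3p+1}},
\end{equation*}
where $B = 2^{(2p+2)/(3p+1)}\bigl(\|v_0 - x^\star\|^2/((1-\sigma^2)\theta^{2/(p-1)})\bigr)^{(p-1)/(3p+1)}$. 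This is the discrete counterpart of the H\"older step inside the proof of Lemma~\ref{Lemma:Lyapunov-Key}.

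The final step is a discrete Bihari-LaSalle argument. Setting $S_k := \sum_{i=1}^k A_i^{(p-1)/(3p+1)}$, the previous display gives $A_k \geq (S_k/B)^{(3p+1)/(p+1)}$, so $S_k - S_{k-1} = A_k^{(p-1)/(3p+1)} \geq (S_k/B)^{(p-1)/(p+1)}$. Concavity of $x \mapsto x^{2/(p+1)}$ in the form $S_k^{2/(p+1)} - S_{k-1}^{2/(p+1)} \geq \tfrac{2}{p+1}\, S_k^{-(p-1)/(p+1)}(S_k - S_{k-1})$ then telescopes (with $S_0 = 0$) to $S_k^{2/(p+1)} \geq 2k/((p+1) B^{(p-1)/(p+1)})$. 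Feeding this back into $A_k \geq (S_k/B)^{(3p+1)/(p+1)}$ and substituting for $B$ yields exactly $A_k \geq \theta(1-\sigma^2)^{(p-1)/2}/\bigl((p+1)^{(3p+1)/2}\|v_0 - x^\star\|^{p-1}\bigr)\, k^{(3p+1)/2}$.

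\textbf{Main obstacle.} The delicate piece is the discrete Bihari step, which replaces the Bihari-LaSalle invocation of Lemma~\ref{Lemma:Lyapunov-Key} by a telescoping argument; the concavity estimate on $x \mapsto x^{2/(p+1)}$ must be applied at each increment and the boundary value $S_0 = 0$ treated carefully via the base case $k=1$ (where the inequality $S_1 - S_0 \geq (S_1/B)^{(p-1)/(p+1)}$ determines $S_1$ directly). A second caveat is the singular behavior at $p=1$, where the exponent $(p+1)/(p-1)$ blows up and the argument above does not formally apply; this case must be handled separately, but is immediate since the algebraic equation forces $\lambda_i \equiv \theta$, hence $a_i = \sqrt{\theta A_i}$, and the same concavity-of-$\sqrt{\cdot}$ calculation gives $\sqrt{A_k} \geq k\sqrt{\theta}/2$, i.e., $A_k \geq \theta k^2/4$, matching the $p=1$ instance of the claim.
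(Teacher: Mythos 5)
Your proof follows essentially the same path as the paper's: the same appeal to \citet[Theorem~3.6]{Monteiro-2013-Accelerated}, the same use of the large-step condition to bound $\sum_i A_i\lambda_i^{-(p+1)/(p-1)}$, the same H\"older split with exponents $\tfrac{3p+1}{p-1}$ and $\tfrac{3p+1}{2p+2}$, the same discrete Bihari--LaSalle telescoping via concavity of $x\mapsto x^{2/(p+1)}$, and the same separate treatment of $p=1$ (the paper invokes \citet[Lemma~3.7]{Monteiro-2013-Accelerated} where you re-derive $\sum_i \sqrt{\lambda_i}\le 2\sqrt{A_k}$ from $a_i^2=\lambda_iA_i$ and concavity of $\sqrt{\cdot}$, which is the content of that lemma). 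One small bookkeeping remark: because you keep $B=2^{(2p+2)/(3p+1)}C$ rather than rounding up to $2C$ as the paper does, feeding your $S_k$ bound back into $A_k\ge (S_k/B)^{(3p+1)/(p+1)}$ actually produces an extra factor $2^{(p-1)/2}\ge 1$, so the result is slightly stronger than, not exactly equal to, the lemma's stated bound — harmless, but worth fixing if you write "yields exactly."
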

\begin{proof}
For $p = 1$, the large-step condition implies that $\lambda_k \geq \theta$ for all $k \geq 0$. By~\citet[Lemma~3.7]{Monteiro-2013-Accelerated}, we have $A_k \geq \frac{\theta k^2}{4}$. 

For $p \geq 2$, the large-step condition implies that
\begin{align*}
\lefteqn{\sum_{i=1}^k A_i(\lambda_i)^{-\frac{p+1}{p-1}}\theta^{\frac{2}{p-1}} \leq \sum_{i=1}^k A_i(\lambda_i)^{-\frac{p+1}{p-1}}(\lambda_i\|x_i - \tilde{v}_{i-1}\|^{p-1})^{\frac{2}{p-1}}} \\
& = \sum_{i=1}^k \frac{A_i}{\lambda_i}\|x_i - \tilde{v}_{i-1}\|^2 \overset{\textnormal{~\citet[Theorem~3.6]{Monteiro-2013-Accelerated}}}{\leq} \frac{\|v_0 - x^\star\|^2}{1-\sigma^2}.   
\end{align*}
By the H\"{o}lder inequality, we have
\begin{equation*}
\sum_{i=1}^k (A_i)^{\frac{p-1}{3p+1}} = \sum_{i=1}^k (A_i(\lambda_i)^{-\frac{p+1}{p-1}})^{\frac{p-1}{3p+1}} (\lambda_i)^{\frac{p+1}{3p+1}} \leq (\sum_{i=1}^k A_i(\lambda_i)^{-\frac{p+1}{p-1}})^{\frac{p-1}{3p+1}}(\sum_{i=1}^k \sqrt{\lambda_i})^{\frac{2p+2}{3p+1}}.
\end{equation*}
For the ease of presentation, we define $C = \theta^{-\frac{2}{3p+1}}(\frac{\|v_0 - x^\star\|^2}{1-\sigma^2})^{\frac{p-1}{3p+1}}$. Putting these pieces together yields: 
\begin{equation}\label{inequality:CAFI-Key-first}
\sum_{i=1}^k (A_i)^{\frac{p-1}{3p+1}} \leq C\left(\sum_{i=1}^k \sqrt{\lambda_i}\right)^{\frac{2p+2}{3p+1}} \overset{\textnormal{~\citet[Lemma~3.7]{Monteiro-2013-Accelerated}}}{\leq} 2C(A_k)^{\frac{p+1}{3p+1}}.  
\end{equation}
The remaining proof is based on the Bihari-LaSalle inequality in discrete time. In particular, we define $\{y_k\}_{k \geq 0}$ by $y_k = \sum_{i=1}^k (A_i)^{\frac{p-1}{3p+1}}$. Then, $y_0=0$ and Eq.~\eqref{inequality:CAFI-Key-first} implies that 
\begin{equation*}
y_k \leq 2C(y_k - y_{k-1})^{\frac{p+1}{p-1}}. 
\end{equation*}
This implies that 
\begin{equation}\label{inequality:CAFI-Key-second}
y_k - y_{k-1} \geq \left(\frac{y_k}{2C}\right)^{\frac{p-1}{p+1}} \Longrightarrow \frac{y_k - y_{k-1}}{(y_k)^{\frac{p-1}{p+1}}} \geq \left(\frac{1}{2C}\right)^{\frac{p-1}{p+1}}. 
\end{equation}
Inspired by the continuous-time inequality in Lemma~\ref{Lemma:CAFI-Key}, we claim that the following discrete-time inequality holds for every integer $k \geq 1$:
\begin{equation}\label{inequality:CAFI-Key-third}
(y_k)^{\frac{2}{p+1}} - (y_{k-1})^{\frac{2}{p+1}} \geq \frac{2}{p+1}\left(\frac{y_k - y_{k-1}}{[y_k]^{\frac{p-1}{p+1}}}\right). 
\end{equation}
Indeed, we define $g(t) = 1 - t^{\frac{2}{p+1}}$ and find that this function is convex for $\forall t \in (0, 1)$ since $p \geq 1$. Thus, we have
\begin{equation*}
1 - t^{\frac{2}{p+1}} = g(t) - g(1) \geq (t-1)\nabla g(1) = \frac{2(1-t)}{p+1} \Longrightarrow \frac{1 - t^{\frac{2}{p+1}}}{1-t} \geq \frac{2}{p+1}. 
\end{equation*}
Since $y_k$ is increasing, we have $\frac{y_{k-1}}{y_k} \in (0, 1)$. Then, the desired Eq.~\eqref{inequality:CAFI-Key-second} follows from setting $t=\frac{y_{k-1}}{y_k}$. Combining Eq.~\eqref{inequality:CAFI-Key-second} and Eq.~\eqref{inequality:CAFI-Key-third} yields that 
\begin{equation*}
(y_k)^{\frac{2}{p+1}} - (y_{k-1})^{\frac{2}{p+1}} \geq \frac{2}{p+1}\left(\frac{1}{2C}\right)^{\frac{p-1}{p+1}}. 
\end{equation*}
Therefore, we conclude that 
\begin{equation*}
(y_k)^{\frac{2}{p+1}} = (y_0)^{\frac{2}{p+1}} + \left(\sum_{i=1}^k (y_i)^{\frac{2}{p+1}} - (y_{i-1})^{\frac{2}{p+1}}\right) \geq \frac{2}{p+1}\left(\frac{1}{2C}\right)^{\frac{p-1}{p+1}}k. 
\end{equation*}
By the definition of $y_k$, we have
\begin{equation*}
\sum_{i=1}^k (A_i)^{\frac{p-1}{3p+1}} \geq \left(\frac{2}{p+1}\right)^{\frac{p+1}{2}}\left(\frac{1}{2C}\right)^{\frac{p-1}{2}} k^{\frac{p+1}{2}}. 
\end{equation*}
This together with Eq.~\eqref{inequality:CAFI-Key-first} yields that  
\begin{equation*}
A_k \geq \left(\frac{1}{2C}\sum_{i=1}^k (A_i)^{\frac{p-1}{3p+1}}\right)^{\frac{3p+1}{p+1}} \geq \left(\frac{1}{(p+1)C}\right)^{\frac{3p+1}{2}} k^{\frac{3p+1}{2}}.   
\end{equation*}
This completes the proof. 
\end{proof}
\begin{remark}
The proof of Lemma~\ref{Lemma:CAFI-Key} is much simpler than the existing analysis; e.g.,~\citet[Lemma~4.2]{Monteiro-2013-Accelerated} for the case of $p=2$ and~\citet[Theorem~3.4]{Jiang-2019-Optimal} and~\citet[Lemma~3.3]{Bubeck-2019-Near} for the case of $p \geq 2$. Notably, it is not a generalization of the highly technical proof in~\citet[Lemma~4.2]{Monteiro-2013-Accelerated} but can be interpreted as the discrete-time counterpart of the proof of Lemma~\ref{Lemma:Lyapunov-Key}. 
\end{remark}
\paragraph{Proof of Theorem~\ref{Theorem:CAFI-Main}:} For every integer $k \geq 1$, by~\citet[Theorem~3.6]{Monteiro-2013-Accelerated} and Lemma~\ref{Lemma:CAFI-Key}, we have
\begin{equation*}
\Phi(x_k) - \Phi(x^\star) \leq \frac{\|v_0-x^\star\|^2}{2A_k} = O(k^{-\frac{3p+1}{2}}). 
\end{equation*}
Combining~\citet[Proposition~3.9]{Monteiro-2013-Accelerated} and Lemma~\ref{Lemma:CAFI-Key}, we have
\begin{align*}
\inf_{1 \leq i \leq k} \lambda_i\|w_i\|^2 & \leq \frac{1+\sigma}{1-\sigma}\frac{\|v_0-x^\star\|^2}{\sum_{i=1}^k A_i} = O(k^{-\frac{3p+3}{2}}), \\ 
\inf_{1 \leq i \leq k} \varepsilon_i & \leq \frac{\sigma^2}{2(1-\sigma^2)}\frac{\|v_0-x^\star\|^2}{\sum_{i=1}^k A_i} = O(k^{-\frac{3p+3}{2}}). 
\end{align*}
In addition, we have $\|\lambda_i w_i + x_i - \tilde{v}_{i-1}\| \leq \sigma\|x_i - \tilde{v}_{i-1}\|$ and $\lambda_i\|x_i - \tilde{v}_{i-1}\|^{p-1} \geq \theta$. This implies that $\lambda_i\|w_i\|^{\frac{p-1}{p}} \geq \theta^{\frac{1}{p}}(1-\sigma)^{\frac{p-1}{p}}$. Putting these pieces together yields that $\inf_{1 \leq i \leq k} \|w_i\|^{\frac{p+1}{p}} = O(k^{-\frac{3p+3}{2}})$ which implies that
\begin{equation*}
\inf_{1 \leq i \leq k} \|w_i\|^2 = \left(\inf_{1 \leq i \leq k} \|w_i\|^{\frac{p+1}{p}}\right)^{\frac{2p}{p+1}} = O(k^{-3p}). 
\end{equation*}
This completes the proof. 

\paragraph{Algorithm~\ref{Algorithm:CAF-II}.} We now present our main results for Algorithm~\ref{Algorithm:CAF-II}. The proof is analogous to that of Theorem~\ref{Theorem:CAFI-Main} and based on another estimate sequence. 
\begin{theorem}\label{Theorem:CAFII-Main}
For every integer $k \geq 1$, the objective function gap satisfies
\begin{equation*}
\Phi(x_k) - \Phi(x^\star) = O(k^{-\frac{3p+1}{2}}) 
\end{equation*}
and 
\begin{equation*}
\inf_{1 \leq i \leq k} \|w_i\|^2 = O(k^{-3p}), \quad \inf_{1 \leq i \leq k} \epsilon_i = O(k^{-\frac{3p+3}{2}}). 
\end{equation*}
\end{theorem}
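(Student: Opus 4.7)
The plan is to mirror the proof of Theorem~\ref{Theorem:CAFI-Main} step for step, exploiting the fact that Algorithm~\ref{Algorithm:CAF-II} is obtained from Algorithm~\ref{Algorithm:CAF-I} by the reciprocal change of variable $A_k \leftrightarrow 1/\gamma_k$. A direct check of the recursions shows that if one sets $\widetilde{A}_k := 1/\gamma_k$ and $\tilde{a}_{k+1} := \alpha_{k+1}/\gamma_{k+1}$, then $\widetilde{A}_{k+1} - \widetilde{A}_k = \tilde{a}_{k+1}$, $\tilde{a}_{k+1}^2 = \lambda_{k+1}\widetilde{A}_{k+1}$, and the $\tilde{v}_k$ and $v_{k+1}$ updates coincide exactly with those of Algorithm~\ref{Algorithm:CAF-I} (only the initial value $\widetilde{A}_0 = 1$ differs from $A_0 = 0$). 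So the entire machinery of~\citet{Monteiro-2013-Accelerated} can be transported to this setting; we just have to rebuild it on top of the new estimate sequence.

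Concretely, the first step is to introduce the Lyapunov-type quantity
\begin{equation*}
\eta_k := \frac{1}{\gamma_k}(\Phi(x_k) - \Phi(x^\star)) + \tfrac{1}{2}\|v_k - x^\star\|^2
\end{equation*}
and prove the Monteiro--Svaiter-style descent inequality
\begin{equation*}
\eta_{k+1} + \frac{1-\sigma^2}{2\lambda_{k+1}}\,\|x_{k+1} - \tilde{v}_k\|^2 \;\leq\; \eta_k,
\end{equation*}
by combining $v_{k+1}-v_k = -(\alpha_{k+1}/\gamma_{k+1})w_{k+1}$, the relation $x_{k+1} - \tilde{v}_k = -\lambda_{k+1}w_{k+1} + e_{k+1}$ with error $e_{k+1}$ controlled by the inexactness condition, the identity $\alpha_{k+1}^2 = \lambda_{k+1}\gamma_{k+1}$, $\gamma_{k+1} = (1-\alpha_{k+1})\gamma_k$, and the $\varepsilon$-subgradient inequality for $w_{k+1} \in \partial_{\epsilon_{k+1}}\Phi(x_{k+1})$. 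Telescoping this inequality yields $\tfrac{1}{\gamma_k}(\Phi(x_k)-\Phi(x^\star)) \leq \tfrac12\|v_0-x^\star\|^2$ together with $\sum_{i=1}^k (1/\gamma_i)\|x_i - \tilde{v}_{i-1}\|^2/\lambda_i \leq \|v_0-x^\star\|^2/(1-\sigma^2)$, i.e.\ the analogs of~\citet[Theorem~3.6 and Proposition~3.9]{Monteiro-2013-Accelerated}.

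The second step is to prove the Algorithm~\ref{Algorithm:CAF-II} analog of Lemma~\ref{Lemma:CAFI-Key}, namely $1/\gamma_k \gtrsim k^{(3p+1)/2}$. This is nothing other than the discrete Bihari--LaSalle calculation already carried out there: the large-step condition $\lambda_{k+1}\|x_{k+1}-\tilde{v}_k\|^{p-1} \geq \theta$ combined with the summation bound just derived and a H\"older inequality give
\begin{equation*}
\sum_{i=1}^k (1/\gamma_i)^{\frac{p-1}{3p+1}} \;\leq\; 2C\,(1/\gamma_k)^{\frac{p+1}{3p+1}},
\end{equation*}
and then the same inductive control of $y_k := \sum_{i=1}^k (1/\gamma_i)^{(p-1)/(3p+1)}$ (via the convexity lemma on $t \mapsto 1 - t^{2/(p+1)}$) yields the claimed lower bound. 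Combining this with the descent inequality immediately gives $\Phi(x_k)-\Phi(x^\star) = O(k^{-(3p+1)/2})$, and combining it with the large-step condition and $\|\lambda_i w_i + x_i - \tilde{v}_{i-1}\| \leq \sigma\|x_i - \tilde{v}_{i-1}\|$ gives $\inf_i \|w_i\|^2 = O(k^{-3p})$ and $\inf_i \epsilon_i = O(k^{-(3p+3)/2})$ exactly as in the proof of Theorem~\ref{Theorem:CAFI-Main}.

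The main obstacle is the first step, namely verifying the descent property of $\eta_k$: because the estimate sequence now uses $1/\gamma_k$ with $\gamma_0 = 1$ rather than $A_k$ with $A_0 = 0$, several of the algebraic identities in~\citet[Section~3]{Monteiro-2013-Accelerated} have to be rederived from scratch, in particular the cancellation between the $\Phi$-gap term and the $\|v_{k+1}-x^\star\|^2$ term that requires careful use of $\gamma_{k+1} = (1-\alpha_{k+1})\gamma_k$ and $\alpha_{k+1}^2 = \lambda_{k+1}\gamma_{k+1}$. Once that bookkeeping is in place, every subsequent step is a direct transcription of the arguments already developed for Algorithm~\ref{Algorithm:CAF-I}.
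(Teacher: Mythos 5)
Your proposal takes essentially the same route as the paper: introduce the discrete Lyapunov function $\eta_k = \tfrac{1}{\gamma_k}(\Phi(x_k)-\Phi(x^\star)) + \tfrac12\|v_k-x^\star\|^2$ (the paper's $\ECal_k$ in Eq.~\eqref{Def:Lyapunov-discrete}), prove a Monteiro--Svaiter-style descent/summation bound (the paper's Lemma~\ref{Lemma:CAFII-Basic-first}, plus the analogs in Lemmas~\ref{Lemma:CAFII-Basic-second}--\ref{Lemma:CAFII-Basic-third}), then run the discrete Bihari--LaSalle argument to get $1/\gamma_k \gtrsim k^{(3p+1)/2}$ (Lemma~\ref{Lemma:CAFII-Key}), exactly mirroring Lemma~\ref{Lemma:CAFI-Key}. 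Your observation that the change of variable $\widetilde{A}_k := 1/\gamma_k$, $\tilde{a}_{k+1} := \alpha_{k+1}/\gamma_{k+1}$ turns Algorithm~\ref{Algorithm:CAF-II} into the Algorithm~\ref{Algorithm:CAF-I} recursion with initialization $\widetilde{A}_0 = 1$ instead of $A_0 = 0$ is a clean unification that the paper only states at the continuous-time level.

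One small inconsistency to fix: the descent inequality you display, $\eta_{k+1} + \tfrac{1-\sigma^2}{2\lambda_{k+1}}\|x_{k+1}-\tilde v_k\|^2 \le \eta_k$, is missing the factor $1/\gamma_{k+1}$ (equivalently $\widetilde{A}_{k+1}$) multiplying the quadratic term; without it the telescoped bound would read $\sum_i \tfrac{1}{\lambda_i}\|x_i-\tilde v_{i-1}\|^2 \le \cdot$ rather than the $\sum_i \tfrac{1}{\lambda_i\gamma_i}\|x_i-\tilde v_{i-1}\|^2 \le 2\eta_0/(1-\sigma^2)$ you correctly use in the next step, and the H\"older/Bihari--LaSalle argument needs the latter. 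Also, since $\gamma_0 = 1 \ne 0$, the right-hand side after telescoping is $\eta_0 = \Phi(x_0)-\Phi(x^\star)+\tfrac12\|v_0-x^\star\|^2$, not $\tfrac12\|v_0-x^\star\|^2$; this only affects the constant, not the rate.
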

Inspired by the continuous-time Lyapunov function in Eq.~\eqref{Def:Lyapunov}, we construct a discrete-time Lypanunov function for Algorithm~\ref{Algorithm:CAF-II} as follows: 
\begin{equation}\label{Def:Lyapunov-discrete}
\ECal_k = \frac{1}{\gamma_k}(\Phi(x_k) - \Phi(x^\star)) + \frac{1}{2}\|v_k - x^\star\|^2. 
\end{equation}
We use this function to prove technical results that pertain to Algorithm~\ref{Algorithm:CAF-II} and which are the analogs of~\citet[Theorem~3.6, Lemma~3.7 and Proposition~3.9]{Monteiro-2013-Accelerated}.
\begin{lemma}\label{Lemma:CAFII-Basic-first}
For every integer $k \geq 1$, 
\begin{equation*}
\frac{1-\sigma^2}{2}\left(\sum_{i=1}^k \frac{1}{\lambda_i\gamma_i}\|x_i - \tilde{v}_{i-1}\|^2\right) \leq \ECal_0 - \ECal_k,
\end{equation*}
which implies that 
\begin{equation*}
\Phi(x_k) - \Phi(x^\star) \leq \gamma_k\ECal_0, \quad \|v_k - x^\star\| \leq \sqrt{2\ECal_0}. 
\end{equation*}
Assuming that $\sigma < 1$, we have $\sum_{i=1}^k \frac{1}{\lambda_i\gamma_i} \|x_i - \tilde{v}_{i-1}\|^2 \leq \frac{2\ECal_0}{1-\sigma^2}$. 
\end{lemma}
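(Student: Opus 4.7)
The plan is to establish a per-step descent inequality of the form
\begin{equation*}
\ECal_k - \ECal_{k-1} \ \leq\ -\,\frac{1-\sigma^2}{2\lambda_k\gamma_k}\,\|x_k - \tilde v_{k-1}\|^2 \qquad (k\ge 1),
\end{equation*}
and then sum telescopically. The first two conclusions follow immediately from $\ECal_k \leq \ECal_0$ together with the fact that each term of $\ECal_k$ is nonnegative; the final conclusion is the telescoped sum itself, with $\sigma<1$ used only to divide through. So the entire lemma reduces to proving the one-step inequality, which is the discrete analogue of Lemma~\ref{Lemma:Lyapunov-first}.

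First I would expand the change in the Lyapunov function. Using $\gamma_k = (1-\alpha_k)\gamma_{k-1}$, i.e.\ $1/\gamma_{k-1}=(1-\alpha_k)/\gamma_k$, write
\begin{equation*}
\frac{1}{\gamma_k}(\Phi(x_k)-\Phi(x^\star))-\frac{1}{\gamma_{k-1}}(\Phi(x_{k-1})-\Phi(x^\star))=\frac{1}{\gamma_k}\bigl[\Phi(x_k)-(1-\alpha_k)\Phi(x_{k-1})-\alpha_k\Phi(x^\star)\bigr].
\end{equation*}
Apply $\epsilon$-subdifferential convexity of $\Phi$ at $x_k$ twice (once at $x_{k-1}$ with weight $1-\alpha_k$, once at $x^\star$ with weight $\alpha_k$) and combine, using $\tilde v_{k-1}=(1-\alpha_k)x_{k-1}+\alpha_k v_{k-1}$, to get
\begin{equation*}
\Phi(x_k)-(1-\alpha_k)\Phi(x_{k-1})-\alpha_k\Phi(x^\star)\ \leq\ \langle w_k,\,x_k-\tilde v_{k-1}\rangle+\alpha_k\langle w_k,\,v_{k-1}-x^\star\rangle+\epsilon_k.
\end{equation*}
For the mixed-energy part, the update $v_k=v_{k-1}-(\alpha_k/\gamma_k)w_k$ yields
\begin{equation*}
\tfrac{1}{2}\|v_k-x^\star\|^2-\tfrac{1}{2}\|v_{k-1}-x^\star\|^2 = -\frac{\alpha_k}{\gamma_k}\langle w_k,\,v_{k-1}-x^\star\rangle+\frac{\alpha_k^2}{2\gamma_k^2}\|w_k\|^2.
\end{equation*}
Adding these two lines, the cross-terms in $\langle w_k,\,v_{k-1}-x^\star\rangle$ cancel, and the identity $\alpha_k^2=\lambda_k\gamma_k$ from \textbf{STEP~2} converts $\alpha_k^2/\gamma_k^2$ into $\lambda_k/\gamma_k$, giving
\begin{equation*}
\ECal_k-\ECal_{k-1}\ \leq\ \frac{1}{\gamma_k}\Bigl[\langle w_k,\,x_k-\tilde v_{k-1}\rangle+\tfrac{\lambda_k}{2}\|w_k\|^2+\epsilon_k\Bigr].
\end{equation*}

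The last step is the key algebraic identity, and I expect it to be the only place where care is needed. Setting $r_k:=\lambda_k w_k+x_k-\tilde v_{k-1}$ and completing the square shows
\begin{equation*}
\langle w_k,\,x_k-\tilde v_{k-1}\rangle+\tfrac{\lambda_k}{2}\|w_k\|^2+\epsilon_k\ =\ \frac{\|r_k\|^2+2\lambda_k\epsilon_k-\|x_k-\tilde v_{k-1}\|^2}{2\lambda_k}.
\end{equation*}
The HPE-type approximation condition $\|r_k\|^2+2\lambda_k\epsilon_k\leq \sigma^2\|x_k-\tilde v_{k-1}\|^2$ in \textbf{STEP~2} then makes the numerator at most $-(1-\sigma^2)\|x_k-\tilde v_{k-1}\|^2$, which is exactly the desired per-step inequality. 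Summing from $i=1$ to $k$ and noting that $\ECal_k\geq 0$ (so $\ECal_0-\ECal_k\leq \ECal_0$) yields the three claimed conclusions at once; the large-step condition on $\lambda_k$ is not needed for this lemma and will only enter the subsequent lower bound on $\gamma_k^{-1}$.
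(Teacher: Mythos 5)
Your proof is correct, and it takes a genuinely different route from the paper. The paper proves this lemma via an estimate-sequence construction: it introduces affine lower models $\phi_k(v)=\Phi(x_k)+\langle v-x_k,w_k\rangle-\epsilon_k-\Phi(x^\star)$, builds the quadratic $\Gamma_k$ recursively, shows by induction that $v_k=\argmin_v\Gamma_k(v)$ and that $\Gamma_k(v)\leq\Gamma_0(v)+\frac{1-\gamma_k}{\gamma_k}(\Phi(v)-\Phi(x^\star))$, and then studies the gap sequence $\beta_k=\inf_v\Gamma_k(v)-\frac{1}{\gamma_k}(\Phi(x_k)-\Phi(x^\star))$; the telescoped inequality for $\beta_k$, combined with $\beta_k\leq\ECal_0-\ECal_k$, yields the claim. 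The crucial recursive step $\beta_{k+1}\geq\beta_k+\frac{1-\sigma^2}{2\lambda_{k+1}\gamma_{k+1}}\|x_{k+1}-\tilde v_k\|^2$ is obtained by invoking Lemma 3.3 of Monteiro and Svaiter. You instead prove the one-step Lyapunov descent $\ECal_k-\ECal_{k-1}\leq-\frac{1-\sigma^2}{2\lambda_k\gamma_k}\|x_k-\tilde v_{k-1}\|^2$ directly, by expanding the two parts of $\ECal_k-\ECal_{k-1}$, letting the $\langle w_k,v_{k-1}-x^\star\rangle$ cross-terms cancel via the $v$-update, using $\alpha_k^2=\lambda_k\gamma_k$, and then completing the square to reduce everything to the HPE-type condition; this is exactly the content of the Monteiro--Svaiter lemma, re-derived in one line. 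Your route avoids the $\Gamma_k$/$\phi_k$/$\beta_k$ scaffolding entirely and is strictly more informative, since it establishes monotone decrease of $\ECal_k$ at every step (the paper's argument only gives $\ECal_k\leq\ECal_0$ through the intermediate $\beta_k$), and it is arguably better aligned with the paper's own stated philosophy that the discrete analysis should mirror the continuous-time Lyapunov argument of Lemma 3.2. Both routes deliver the same final inequalities, and your observation that the large-step condition plays no role here is likewise correct — it enters only in Lemma 4.7.
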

\begin{proof}
It suffices to prove the first inequality which implies the other results. Based on the discrete-time Lyapunov function, we define two functions $\phi_k: \br^d \mapsto \br$ and $\Gamma_k: \br^d \mapsto \br$ by ($\Gamma_k$ is related to $\ECal_k$ and defined recursively):
\begin{align*}
& \phi_k(v) = \Phi(x_k) + \langle v - x_k, w_k\rangle - \epsilon_k - \Phi(x^\star), \ \forall k \geq 0, \\
& \Gamma_0(v) = \frac{1}{\gamma_0}(\Phi(x_0) - \Phi(x^\star)) + \frac{1}{2}\|v - v_0\|^2, \ \Gamma_{k+1} = \Gamma_k + \frac{\alpha_{k+1}}{\gamma_{k+1}}\phi_{k+1}, \ \forall k \geq 0.
\end{align*}
First, by definition, $\phi_k$ is affine. Since $w_{k+1} \in \partial_{\epsilon_{k+1}} \Phi(x_{k+1})$, Eq.~\eqref{Def:SD} implies that $\phi_k(v) \leq \Phi(v) - \Phi(x^\star)$. Furthermore, $\Gamma_k$ is quadratic and $\nabla^2 \Gamma_k = \nabla^2\Gamma_0$ since $\phi_k$ is affine. Then, we prove that $\Gamma_k(v) \leq \Gamma_0(v) + \frac{1 - \gamma_k}{\gamma_k}(\Phi(v) - \Phi(x^\star))$ using induction. Indeed, it holds when $k=0$ since $\gamma_0=1$. Assuming that this inequality holds for $\forall i \leq k$, we derive from $\phi_k(v) \leq \Phi(v) - \Phi(x^\star)$ and $\gamma_{k+1} = (1 - \alpha_{k+1})\gamma_k$ that 
\begin{equation*}
\Gamma_{k+1}(v) \leq \Gamma_0(v) + \left(\frac{1 - \gamma_k}{\gamma_k}+\frac{\alpha_{k+1}}{\gamma_{k+1}}\right)(\Phi(v) - \Phi(x^\star)) = \Gamma_0(v) + \frac{1 - \gamma_k}{\gamma_k}(\Phi(v) - \Phi(x^\star)).   
\end{equation*}
Finally, we prove that $v_k = \argmin_{v \in \br^d} \Gamma_k(v)$ using the induction. Indeed, it holds when $k=0$. Suppose that this inequality holds for $\forall i \leq k$, we have
\begin{equation*}
\nabla \Gamma_{k+1}(v) = \nabla\Gamma_k(v) + \frac{\alpha_{k+1}}{\gamma_{k+1}}\nabla\phi_{k+1}(v) = v - v_k + \frac{\alpha_{k+1}}{\gamma_{k+1}}w_{k+1}. 
\end{equation*}
Using the definition of $v_k$ and the fact that $\gamma_{k+1} = (1 - \alpha_{k+1})\gamma_k$, we have $\nabla \Gamma_{k+1}(v) = 0$ if and only if $v = v_{k+1}$.

The remaining proof is based on the gap sequence $\{\beta_k\}_{k \geq 0}$ which is defined by $\beta_k = \inf_{v \in \br^d} \Gamma_k(v) - \frac{1}{\gamma_k}(\Phi(x_k) - \Phi(x^\star))$. Using the previous facts that $\Gamma_k$ is quadratic with $\nabla^2 \Gamma_k=1$ and the upper bound for $\Gamma_k(v)$, we have  
\begin{equation*}
\beta_k = \Gamma_k(x^\star) - \frac{1}{\gamma_k}(\Phi(x_k) - \Phi(x^\star)) - \frac{1}{2}\|x^\star - v_k\|^2 \leq \Gamma_0(x^\star) - \ECal_k = \ECal_0 - \ECal_k.  
\end{equation*}
By definition, we have $\beta_0 = 0$. Thus, it suffices to prove that the following recursive inequality holds true for every integer $k \geq 0$, 
\begin{equation}\label{inequality:CAFII-recursive-gap}
\beta_{k+1} \geq \beta_k + \frac{1-\sigma^2}{2\lambda_{k+1}\gamma_{k+1}}\|x_{k+1} - \tilde{v}_k\|^2. 
\end{equation}
In particular, we define $\tilde{v} = (1 - \alpha_{k+1})x_k + \alpha_{k+1}v$ for any given $v \in \br^d$. Using the definition of $\tilde{v}_k$ and the affinity of $\phi_{k+1}$, we have
\begin{align}
\phi_{k+1}(\tilde{v}) & = (1 - \alpha_{k+1})\phi_{k+1}(x_k) + \alpha_{k+1}\phi_{k+1}(v), \label{CAFII:rgap-first} \\ 
\tilde{v} - \tilde{v}_k & = \alpha_{k+1}(v - v_k). \label{CAFII:rgap-second}
\end{align}
Since $\Gamma_k$ is quadratic with $\nabla^2 \Gamma_k=1$, we have $\Gamma_k(v) = \Gamma_k(v_k) + \frac{1}{2}\|v - v_k\|^2$. Plugging this into the recursive equation for $\Gamma_k$ yields that 
\begin{equation*}
\Gamma_{k+1}(v) = \Gamma_k(v_k) + \frac{1}{2}\|v - v_k\|^2 + \frac{\alpha_{k+1}}{\gamma_{k+1}}\phi_{k+1}(v). 
\end{equation*}
By the definition of $\beta_k$, we have $\Gamma_k(v_k) = \beta_k + \frac{1}{\gamma_k}(\Phi(x_k) - \Phi(x^\star))$. Putting these pieces together with the definition of $\ECal_k$ yields that 
\begin{equation*}
\Gamma_{k+1}(v) = \beta_k + \frac{\alpha_{k+1}}{\gamma_{k+1}}\phi_{k+1}(v) + \frac{1}{\gamma_k}(\Phi(x_k) - \Phi(x^\star)) + \frac{1}{2}\|v - v_k\|^2. 
\end{equation*}
Since $\phi_{k+1}(v) \leq \Phi(v) - \Phi(x^\star)$, we have
\begin{align*}
\lefteqn{\Gamma_{k+1}(v) \geq \beta_k + \frac{\alpha_{k+1}}{\gamma_{k+1}}\phi_{k+1}(v) + \frac{1}{\gamma_k}\phi_{k+1}(x_k) + \frac{1}{2}\|v - v_k\|^2} \\
& \overset{\textnormal{Eq.~\eqref{CAFII:rgap-first}}}{=} \beta_k + \frac{1}{\gamma_{k+1}}\phi_{k+1}(\tilde{v}) + \frac{1}{2}\|v - v_k\|^2 \\
& \quad = \beta_k + \frac{1}{\gamma_{k+1}}\left(\phi_{k+1}(\tilde{v}) + \frac{\gamma_{k+1}}{2}\|v - v_k\|^2\right) \\
& \overset{\textnormal{Eq.~\eqref{CAFII:rgap-second}}}{=} \beta_k + \frac{1}{\gamma_{k+1}}\left(\phi_{k+1}(\tilde{v}) + \frac{\gamma_{k+1}}{2(\alpha_{k+1})^2}\|\tilde{v} - \tilde{v}_k\|^2\right) \\
& \quad = \beta_k + \frac{1}{\gamma_{k+1}}\left(\phi_{k+1}(\tilde{v}) + \frac{1}{2\lambda_{k+1}}\|\tilde{v} - \tilde{v}_k\|^2\right). 
\end{align*}
Using~\citet[Lemma~3.3]{Monteiro-2013-Accelerated} with $\lambda = \lambda_{k+1}$, $\tilde{v} = \tilde{v}_k$, $\tilde{x} = x_{k+1}$, $\tilde{w} = w_{k+1}$ and $\epsilon = \epsilon_{k+1}$, we have
\begin{equation*}
\inf_{v \in \br^d} \left\{\langle v - x_{k+1}, w_{k+1}\rangle - \epsilon_{k+1} + \frac{1}{2\lambda_{k+1}}\|v - \tilde{v}_k\|^2\right\} \geq \frac{1-\sigma^2}{2\lambda_{k+1}}\|x_{k+1} - \tilde{v}_k\|^2. 
\end{equation*}
which implies that 
\begin{equation*}
\phi_{k+1}(\tilde{v}) + \frac{1}{2\lambda_{k+1}}\|\tilde{v} - \tilde{v}_k\|^2 - \frac{1}{\gamma_{k+1}}(\Phi(x_{k+1}) - \Phi(x^\star)) \geq \frac{1-\sigma^2}{2\lambda_{k+1}}\|x_{k+1} - \tilde{v}_k\|^2.
\end{equation*}
Putting these pieces together yields that  
\begin{equation*}
\inf_{v \in \br^d} \Gamma_{k+1}(v) - \frac{1}{\gamma_{k+1}}(\Phi(x_{k+1}) - \Phi(x^\star)) \geq \beta_k + \frac{1-\sigma^2}{2\lambda_{k+1}\gamma_{k+1}}\|x_{k+1} - \tilde{v}_k\|^2. 
\end{equation*}
which together with the definition of $\beta_k$ yields the desired inequality in Eq.~\eqref{inequality:CAFII-recursive-gap}. This completes the proof. 
\end{proof}
\begin{lemma}\label{Lemma:CAFII-Basic-second}
For every integer $k \geq 0$, it holds that 
\begin{equation*}
\sqrt{\frac{1}{\gamma_{k+1}}} \geq \sqrt{\frac{1}{\gamma_k}} + \frac{1}{2}\sqrt{\lambda_{k+1}}. 
\end{equation*}
As a consequence, the following statements hold: (i) For every integer $k \geq 0$, it holds that $\gamma_k \leq (1+\frac{1}{2}\sum_{j=1}^k \sqrt{\lambda_j})^{-2}$; (ii) If $\sigma < 1$ is further assumed, then we have $\sum_{j=1}^k \|x_j - \tilde{v}_{j-1}\|^2 \leq \frac{2\ECal_0}{1-\sigma^2}$.
\end{lemma}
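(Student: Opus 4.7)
The plan is to establish the main recursion by exploiting the two defining identities of \textbf{STEP 2}--\textbf{STEP 3} in Algorithm~\ref{Algorithm:CAF-II}, namely $\gamma_{k+1} = (1-\alpha_{k+1})\gamma_k$ and $\alpha_{k+1}^2 = \lambda_{k+1}\gamma_{k+1}$. First I would compute
$$\frac{1}{\gamma_{k+1}} - \frac{1}{\gamma_k} \;=\; \frac{1 - (1-\alpha_{k+1})}{(1-\alpha_{k+1})\gamma_k} \;=\; \frac{\alpha_{k+1}}{\gamma_{k+1}},$$
and then factor the left-hand side as the difference of squares $\bigl(\sqrt{1/\gamma_{k+1}} - \sqrt{1/\gamma_k}\bigr)\bigl(\sqrt{1/\gamma_{k+1}} + \sqrt{1/\gamma_k}\bigr)$. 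Because $\alpha_{k+1} \in (0,1)$, the sequence $\{\gamma_k\}$ is nonincreasing, so the second factor is bounded above by $2\sqrt{1/\gamma_{k+1}}$. Solving and inserting $\alpha_{k+1} = \sqrt{\lambda_{k+1}\gamma_{k+1}}$ will deliver the target inequality $\sqrt{1/\gamma_{k+1}} - \sqrt{1/\gamma_k} \geq \tfrac{1}{2}\sqrt{\lambda_{k+1}}$.

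For consequence (i), I would simply telescope this inequality from $j=1$ to $j=k$ using $\gamma_0 = 1$ (hence $\sqrt{1/\gamma_0}=1$) and invert to obtain the stated bound on $\gamma_k$. For consequence (ii), the key observation is that $\lambda_j \gamma_j = \alpha_j^2 \leq 1$, since the algorithm forces $\alpha_j \in (0,1)$. Thus $\|x_j - \tilde{v}_{j-1}\|^2 \leq \tfrac{1}{\lambda_j\gamma_j}\|x_j - \tilde{v}_{j-1}\|^2$ termwise, and summing over $j=1,\ldots,k$ and invoking the bound $\sum_{j=1}^k \tfrac{1}{\lambda_j\gamma_j}\|x_j - \tilde{v}_{j-1}\|^2 \leq \tfrac{2\ECal_0}{1-\sigma^2}$ from Lemma~\ref{Lemma:CAFII-Basic-first} finishes the job.

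The only point that needs care, and which I would verify up front, is the well-posedness claim $\alpha_{k+1}\in(0,1)$. This follows from the intermediate value theorem applied to $f(\alpha)=\alpha^2+\lambda_{k+1}\gamma_k\alpha - \lambda_{k+1}\gamma_k$ (the equation equivalent to $\alpha^2=\lambda_{k+1}(1-\alpha)\gamma_k$): one has $f(0)=-\lambda_{k+1}\gamma_k<0$ and $f(1)=1>0$, yielding a unique root in $(0,1)$. Beyond this sanity check, the argument is essentially algebraic bookkeeping, so I do not anticipate a genuine obstacle; the whole lemma is a discrete-time mirror of the elementary identity $\tfrac{d}{dt}\sqrt{1/\gamma(t)} = \tfrac{1}{2}\sqrt{\lambda(t)}$ that is built into the continuous-time system of Eq.~\eqref{Sys:FO-inverse}.
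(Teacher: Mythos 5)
Your proposal is correct and follows essentially the same route as the paper. You both start from the two algorithmic identities $\gamma_{k+1} = (1-\alpha_{k+1})\gamma_k$ and $\alpha_{k+1}^2 = \lambda_{k+1}\gamma_{k+1}$ and derive the same recursion $\frac{1}{\gamma_{k+1}} - \frac{1}{\gamma_k} = \frac{\alpha_{k+1}}{\gamma_{k+1}} = \sqrt{\lambda_{k+1}/\gamma_{k+1}}$; the only divergence is in the final algebraic step. The paper completes the square (adding $\lambda_{k+1}/4 \geq 0$, recognizing $(\sqrt{1/\gamma_{k+1}} - \tfrac{1}{2}\sqrt{\lambda_{k+1}})^2$, then taking square roots after first verifying $\sqrt{1/\gamma_{k+1}} \geq \tfrac{1}{2}\sqrt{\lambda_{k+1}}$), whereas you factor the difference $\frac{1}{\gamma_{k+1}} - \frac{1}{\gamma_k}$ as a difference of squares and bound $\sqrt{1/\gamma_{k+1}} + \sqrt{1/\gamma_k} \leq 2\sqrt{1/\gamma_{k+1}}$ by monotonicity of $\{\gamma_k\}$. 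Both are one-line manipulations of the same quantity, so this is a cosmetic rather than structural difference; if anything, your version dispenses with the need to check a sign condition before extracting roots. Your handling of (i) by telescoping with $\gamma_0=1$ and of (ii) via the termwise bound $\lambda_j\gamma_j = \alpha_j^2 < 1$ fed into Lemma~\ref{Lemma:CAFII-Basic-first} matches what the paper leaves implicit when it says the main inequality ``implies the other results,'' and your intermediate-value-theorem check that $\alpha_{k+1} \in (0,1)$ is a sound way to justify well-posedness.
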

\begin{proof}
It suffices to prove the first inequality which implies the other results. By the definition of $\{\gamma_k\}_{k \geq 0}$ and $\{\alpha_k\}_{k \geq 0}$, we have $\gamma_{k+1}=(1-\alpha_{k+1})\gamma_k$ and $(\alpha_{k+1})^2=\lambda_{k+1}\gamma_{k+1}$. This implies that 
\begin{equation*}
\frac{1}{\gamma_k} = \frac{1}{\gamma_{k+1}} - \frac{\alpha_{k+1}}{\gamma_{k+1}} = \frac{1}{\gamma_{k+1}} - \sqrt{\frac{\lambda_{k+1}}{\gamma_{k+1}}}. 
\end{equation*}
Since $\gamma_k > 0$ and $\lambda_k > 0$, we have $\sqrt{\frac{1}{\gamma_{k+1}}} \geq \frac{1}{2}\sqrt{\lambda_{k+1}}$ and  
\begin{equation*}
\frac{1}{\gamma_k} \leq \frac{1}{\gamma_{k+1}} - \sqrt{\frac{\lambda_{k+1}}{\gamma_{k+1}}} + \frac{\lambda_{k+1}}{4} = \left(\sqrt{\frac{1}{\gamma_{k+1}}} - \frac{1}{2}\sqrt{\lambda_{k+1}}\right)^2. 
\end{equation*} 
which implies the desired inequality.
\end{proof}
\begin{lemma}\label{Lemma:CAFII-Basic-third}
For every integer $k \geq 1$ and $\sigma<1$, there exists $1 \leq i \leq k$ such that 
\begin{equation*}
\inf_{1 \leq i \leq k} \sqrt{\lambda_i}\|w_i\| \leq \sqrt{\frac{1+\sigma}{1-\sigma}}\sqrt{\frac{2\ECal_0}{\sum_{i=1}^k \frac{1}{\gamma_i}}}, \quad \inf_{1 \leq i \leq k} \epsilon_i \leq \frac{\sigma^2}{2(1-\sigma^2)}\frac{2\ECal_0}{\sum_{i=1}^k \frac{1}{\gamma_i}}. 
\end{equation*}
\end{lemma}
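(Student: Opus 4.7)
The plan is to mimic the structure of the continuous-time estimate in Lemma~\ref{Lemma:Lyapunov-third} by transferring the one-step error inequality from STEP~2 of Algorithm~\ref{Algorithm:CAF-II} into a weighted summable bound, and then extract the infimum via an averaging argument weighted by $1/\gamma_i$. Concretely, I would first unpack the error condition
\begin{equation*}
\|\lambda_i w_i + x_i - \tilde v_{i-1}\|^2 + 2\lambda_i\epsilon_i \leq \sigma^2\|x_i - \tilde v_{i-1}\|^2
\end{equation*}
into two consequences: (i) by the triangle inequality, $\|\lambda_i w_i\| \leq (1+\sigma)\|x_i - \tilde v_{i-1}\|$, which after dividing by $\lambda_i$ yields $\lambda_i\|w_i\|^2 \leq (1+\sigma)^2\|x_i - \tilde v_{i-1}\|^2/\lambda_i$; and (ii) directly, $\epsilon_i \leq \sigma^2\|x_i - \tilde v_{i-1}\|^2/(2\lambda_i)$.

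Next, I would divide both consequences by $\gamma_i$, so that the right-hand sides become $(1+\sigma)^2\|x_i - \tilde v_{i-1}\|^2/(\lambda_i\gamma_i)$ and $\sigma^2\|x_i - \tilde v_{i-1}\|^2/(2\lambda_i\gamma_i)$, respectively. This is the crucial move: it aligns the right-hand side with the summable quantity controlled by Lemma~\ref{Lemma:CAFII-Basic-first}, namely $\sum_{i=1}^k \|x_i - \tilde v_{i-1}\|^2/(\lambda_i\gamma_i) \leq 2\ECal_0/(1-\sigma^2)$. Summing over $i = 1,\ldots,k$ then produces
\begin{equation*}
\sum_{i=1}^k \frac{\lambda_i\|w_i\|^2}{\gamma_i} \leq \frac{(1+\sigma)(2\ECal_0)}{1-\sigma}, \qquad \sum_{i=1}^k \frac{\epsilon_i}{\gamma_i} \leq \frac{\sigma^2\ECal_0}{1-\sigma^2}.
\end{equation*}

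Finally, I would apply the elementary inequality $(\inf_{1\leq i\leq k} a_i)\sum_{i=1}^k (1/\gamma_i) \leq \sum_{i=1}^k a_i/\gamma_i$ with $a_i = \lambda_i\|w_i\|^2$ and $a_i = \epsilon_i$, respectively. Dividing by $\sum_{i=1}^k 1/\gamma_i$ and taking a square root in the first case yields the two asserted bounds. The proof is essentially bookkeeping once the triangle-inequality step and the reweighting by $1/\gamma_i$ are recognized; the main (minor) obstacle is just being careful with the algebraic manipulation $(1+\sigma)^2/(1-\sigma^2) = (1+\sigma)/(1-\sigma)$ and ensuring the reweighting matches the normalization in Lemma~\ref{Lemma:CAFII-Basic-first}. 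No additional machinery beyond the already-established Lyapunov descent is needed.
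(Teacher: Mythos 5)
Your proof is correct and the constants check out: dividing the two consequences of the error condition by $\gamma_i$, summing against the bound from Lemma~\ref{Lemma:CAFII-Basic-first}, and then applying the averaging inequality $(\inf_i a_i)\sum_i 1/\gamma_i \leq \sum_i a_i/\gamma_i$ gives exactly the stated infimum bounds, and the $(1+\sigma)^2/(1-\sigma^2) = (1+\sigma)/(1-\sigma)$ simplification is handled correctly.

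The paper's argument is essentially the same but bundles the two error quantities first, defining $\tau_i = \max\{2\epsilon_i/\sigma^2,\; \lambda_i\|w_i\|^2/(1+\sigma)^2\}$ so that a single inequality $\lambda_i\tau_i \leq \|x_i-\tilde v_{i-1}\|^2$ feeds into the same averaging step. That $\tau$-bundling is not just cosmetic: it produces a \emph{single} index $i^\star$ (the one attaining $\inf_i\tau_i$) at which both $\sqrt{\lambda_{i^\star}}\|w_{i^\star}\|$ and $\epsilon_{i^\star}$ are simultaneously small, which is what the lemma's "there exists $1\leq i\leq k$" phrasing is getting at. Your separate treatment establishes the two infimum inequalities (which is all that is used downstream in Theorem~\ref{Theorem:CAFII-Main}) but does not, by itself, exhibit a common witnessing index, since the minimizer of $\lambda_i\|w_i\|^2$ need not minimize $\epsilon_i$. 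If you want the strictly stronger "single $i$" conclusion, adopt the $\max$-of-two-quantities device; otherwise your argument is a clean and slightly more direct route to the inequalities as displayed.
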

\begin{proof}
With the convention $0/0=0$, we define $\tau_k = \max\{\frac{2\epsilon_k}{\sigma^2}, \frac{\lambda_k\|w_k\|^2}{(1+\sigma)^2}\}$ for every integer $k \geq 1$. Then, we have
\begin{align*}
& 2\lambda_k\epsilon_k \leq \sigma^2\|x_k - \tilde{v}_{k-1}\|^2, \\
& \|\lambda_k w_k\| \leq \|\lambda_k w_k + x_k - \tilde{v}_{k-1}\| + \|x_k - \tilde{v}_{k-1}\| \leq (1+\sigma)\|x_k - \tilde{v}_{k-1}\|. 
\end{align*}
which implies that $\lambda_k\tau_k \leq \|x_k - \tilde{v}_{k-1}\|^2$ for every integer $k \geq 1$. This together with Lemma~\ref{Lemma:CAFII-Basic-first} yields that 
\begin{equation*}
\frac{2\ECal_0}{1-\sigma^2} \geq \sum_{i=1}^k \frac{1}{\lambda_i\gamma_i}\|x_i - \tilde{v}_{i-1}\|^2 \geq \left(\inf_{1 \leq i \leq k} \tau_i\right)\left(\sum_{i=1}^k \frac{1}{\gamma_i}\right). 
\end{equation*}
Combining this inequality with the definition of $\tau_k$ yields the desired results. 
\end{proof}
As the analog of Lemma~\ref{Lemma:CAFI-Key}, we provide a technical lemma on the upper bound for $\gamma_k$. The analysis is based on the same idea for proving Lemma~\ref{Lemma:CAFI-Key} and is motivated by continuous-time analysis for the first-order system in Eq.~\eqref{Sys:FO-inverse}.
\begin{lemma}\label{Lemma:CAFII-Key}
For $p \geq 1$ and every integer $k \geq 1$, we have
\begin{equation*}
\gamma_k \leq \frac{(p+1)^{\frac{3p+1}{2}}}{\theta}\left(\frac{2\ECal_0}{1-\sigma^2}\right)^{\frac{p-1}{2}} k^{-\frac{3p+1}{2}}.
\end{equation*}
\end{lemma}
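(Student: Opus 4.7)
} The plan is to mirror the argument of Lemma~\ref{Lemma:CAFI-Key}, translating each ingredient from the $A_k$-sequence of Algorithm~\ref{Algorithm:CAF-I} to the $1/\gamma_k$-sequence of Algorithm~\ref{Algorithm:CAF-II}. The three structural ingredients I need are (i) a summable energy estimate playing the role of~\citet[Theorem~3.6]{Monteiro-2013-Accelerated}, (ii) a telescoping bound on $\sum \sqrt{\lambda_i}$ playing the role of~\citet[Lemma~3.7]{Monteiro-2013-Accelerated}, and (iii) a discrete Bihari--LaSalle step. Ingredient (i) is provided by Lemma~\ref{Lemma:CAFII-Basic-first}, which gives $\sum_{i=1}^k \frac{1}{\lambda_i \gamma_i}\|x_i-\tilde v_{i-1}\|^2 \leq \frac{2\ECal_0}{1-\sigma^2}$, and ingredient (ii) is provided by Lemma~\ref{Lemma:CAFII-Basic-second}, which yields $\sum_{j=1}^k \sqrt{\lambda_j} \leq 2\gamma_k^{-1/2}$.

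First, I would dispose of the case $p=1$ separately: the large-step condition then reads $\lambda_i \geq \theta$, so Lemma~\ref{Lemma:CAFII-Basic-second} gives $\gamma_k^{-1/2} \geq 1 + \frac{\sqrt{\theta}\, k}{2}$, which immediately implies $\gamma_k \leq \frac{4}{\theta k^2}$, matching the claim.

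For $p\geq 2$, I would substitute the large-step condition $\|x_i - \tilde v_{i-1}\|^2 \geq (\theta/\lambda_i)^{2/(p-1)}$ into ingredient (i) to obtain
\begin{equation*}
\theta^{\frac{2}{p-1}} \sum_{i=1}^k \frac{1}{\gamma_i \lambda_i^{(p+1)/(p-1)}} \;\leq\; \frac{2\ECal_0}{1-\sigma^2}.
\end{equation*}
Then an application of H\"older's inequality with conjugate exponents $\frac{3p+1}{p-1}$ and $\frac{3p+1}{2p+2}$, exactly as in Eq.~\eqref{inequality:CAFI-Key-first}, gives
\begin{equation*}
\sum_{i=1}^k \gamma_i^{-\frac{p-1}{3p+1}} \;\leq\; \Big(\sum_{i=1}^k \tfrac{1}{\gamma_i\lambda_i^{(p+1)/(p-1)}}\Big)^{\frac{p-1}{3p+1}}\Big(\sum_{i=1}^k \sqrt{\lambda_i}\Big)^{\frac{2p+2}{3p+1}} \;\leq\; 2C\,\gamma_k^{-\frac{p+1}{3p+1}},
\end{equation*}
with $C := \theta^{-\frac{2}{3p+1}}\big(\tfrac{2\ECal_0}{1-\sigma^2}\big)^{\frac{p-1}{3p+1}}$, after invoking ingredient (ii) for the $\sqrt{\lambda_i}$ sum.

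Finally, I would run the same discrete Bihari--LaSalle bootstrap as in the proof of Lemma~\ref{Lemma:CAFI-Key}: setting $y_k := \sum_{i=1}^k \gamma_i^{-(p-1)/(3p+1)}$, the previous display reads $y_k \leq 2C(y_k - y_{k-1})^{(p+1)/(p-1)}$, and the convexity inequality $(y_k)^{2/(p+1)} - (y_{k-1})^{2/(p+1)} \geq \tfrac{2}{p+1}(y_k - y_{k-1})\,y_k^{-(p-1)/(p+1)}$ telescopes to $(y_k)^{2/(p+1)} \geq \tfrac{2}{p+1}(2C)^{-(p-1)/(p+1)}\,k$. Back-substituting via $\gamma_k^{-1} \geq (y_k/(2C))^{(3p+1)/(p+1)}$ yields the stated bound on $\gamma_k$ after collecting constants.

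The only nontrivial point, and the place I would double-check carefully, is the passage from the raw H\"older estimate to the clean form $\sum \gamma_i^{-(p-1)/(3p+1)} \leq 2C\gamma_k^{-(p+1)/(3p+1)}$: one must verify that Lemma~\ref{Lemma:CAFII-Basic-second} indeed delivers the bound $\sum \sqrt{\lambda_j}\leq 2\gamma_k^{-1/2}$ (rather than with an additive constant that would pollute the rate), and that the exponents work out so that the right-hand side depends only on $\gamma_k$ and not on intermediate $\gamma_i$. Once this is in place, the remainder of the argument is a direct transcription of the corresponding steps in Lemma~\ref{Lemma:CAFI-Key}, so no new ideas are required.
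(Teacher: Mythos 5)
Your proposal is correct and follows essentially the same route as the paper's proof: the $p=1$ case via Lemma~\ref{Lemma:CAFII-Basic-second}, then for $p\geq 2$ the large-step substitution into Lemma~\ref{Lemma:CAFII-Basic-first}, the H\"older step with conjugate exponents $\tfrac{3p+1}{p-1}$ and $\tfrac{3p+1}{2p+2}$, the discrete Bihari--LaSalle bootstrap, and the final back-substitution, all match. The point you flag for double-checking is sound: Lemma~\ref{Lemma:CAFII-Basic-second} gives $\sum_j\sqrt{\lambda_j}\leq 2(\gamma_k^{-1/2}-1)\leq 2\gamma_k^{-1/2}$, so the additive constant drops out in the favorable direction and the rate is clean.
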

\begin{proof}
For $p = 1$, the large-step condition implies that $\lambda_k \geq \theta$ for all $k \geq 0$. By Lemma~\ref{Lemma:CAFII-Basic-second}, we have $\gamma_k \leq \frac{4}{\theta k^2}$. 

For $p \geq 2$, the large-step condition implies that 
\begin{align*}
\lefteqn{\sum_{i=1}^k (\gamma_i)^{-1}(\lambda_i)^{-\frac{p+1}{p-1}}\theta^{\frac{2}{p-1}} \leq \sum_{i=1}^k (\gamma_i)^{-1}(\lambda_i)^{-\frac{p+1}{p-1}}(\lambda_i\|x_i - \tilde{v}_{i-1}\|^{p-1})^{\frac{2}{p-1}}} \\ 
& = \sum_{i=1}^k \frac{1}{\lambda_i\gamma_i}\|x_i - \tilde{v}_{i-1}\|^2 \overset{\text{Lemma}~\ref{Lemma:CAFII-Basic-first}}{\leq} \frac{2\ECal_0}{1-\sigma^2}. 
\end{align*}
By the H\"{o}lder inequality, we have
\begin{equation*}
\sum_{i=1}^k (\gamma_i)^{-\frac{p-1}{3p+1}} = \sum_{i=1}^k \left(\frac{1}{(\lambda_i)^{\frac{p+1}{p-1}}\gamma_i}\right)^{\frac{p-1}{3p+1}} (\lambda_i)^{\frac{p+1}{3p+1}} \leq \left(\sum_{i=1}^k \frac{1}{(\lambda_i)^{\frac{p+1}{p-1}}\gamma_i}\right)^{\frac{p-1}{3p+1}}\left(\sum_{i=1}^k \sqrt{\lambda_i}\right)^{\frac{2p+2}{3p+1}}.
\end{equation*}
For ease of presentation, we define $C = \theta^{-\frac{2}{3p+1}}(\frac{2\ECal_0}{1-\sigma^2})^{\frac{p-1}{3p+1}}$. Putting these pieces together yields that 
\begin{equation}\label{inequality:CAFII-Key-first}
\sum_{i=1}^k (\gamma_i)^{-\frac{p-1}{3p+1}} \leq C\left(\sum_{i=1}^k \sqrt{\lambda_i}\right)^{\frac{2p+2}{3p+1}} \overset{\text{Lemma}~\ref{Lemma:CAFII-Basic-second}}{\leq} 2C(\gamma_k)^{-\frac{p+1}{3p+1}}.  
\end{equation}
Using the same argument for proving Lemma~\ref{Lemma:CAFI-Key}, we have
\begin{equation*}
\sum_{i=1}^k (\gamma_i)^{-\frac{p-1}{3p+1}} \geq \left(\frac{2}{p+1}\right)^{\frac{p+1}{2}}\left(\frac{1}{2C}\right)^{\frac{p-1}{2}} k^{\frac{p+1}{2}}. 
\end{equation*}
This together with Eq.~\eqref{inequality:CAFII-Key-first} yields that  
\begin{equation*}
\frac{1}{\gamma_k} \geq \left(\frac{1}{2C}\sum_{i=1}^k (\gamma_i)^{-\frac{p-1}{3p+1}}\right)^{\frac{3p+1}{p+1}} \geq \left(\frac{1}{(p+1)C}\right)^{\frac{3p+1}{2}} k^{\frac{3p+1}{2}}.   
\end{equation*}
This completes the proof. 
\end{proof}
\paragraph{Proof of Theorem~\ref{Theorem:CAFII-Main}:} For every integer $k \geq 1$, by Lemma~\ref{Lemma:CAFII-Basic-first} and Lemma~\ref{Lemma:CAFII-Key}, we have
\begin{equation*}
\Phi(x_k) - \Phi(x^\star) \leq \gamma_k\ECal_0 = O(k^{-\frac{3p+1}{2}}). 
\end{equation*}
By Lemma~\ref{Lemma:CAFII-Basic-third} and Lemma~\ref{Lemma:CAFII-Key}, we have
\begin{align*}
& \inf_{1 \leq i \leq k} \lambda_i\|w_i\|^2 \leq \frac{1+\sigma}{1-\sigma}\frac{2\ECal_0}{\sum_{i=1}^k \frac{1}{\gamma_i}} = O(k^{-\frac{3p+3}{2}}), \\ 
& \inf_{1 \leq i \leq k} \epsilon_i \leq \frac{\sigma^2}{2(1-\sigma^2)}\frac{2\ECal_0}{\sum_{i=1}^k \gamma_i} = O(k^{-\frac{3p+3}{2}}). 
\end{align*}
As in the proof of Theorem~\ref{Theorem:CAFI-Main}, we conclude that $\inf_{1 \leq i \leq k} \|w_i\|^2 = O(k^{-3p})$. This completes the proof. 
\begin{remark}
The discrete-time analysis in this subsection is based on a discrete-time Lyapunov function in Eq.~\eqref{Def:Lyapunov-discrete}, which is closely related to the continuous one in Eq.~\eqref{Def:Lyapunov}, and two simple yet nontrivial technical lemmas (see Lemma~\ref{Lemma:CAFI-Key} and~\ref{Lemma:CAFII-Key}), which are both discrete-time versions of Lemma~\ref{Lemma:Lyapunov-Key}. Notably, the proofs of Lemma~\ref{Lemma:CAFI-Key} and~\ref{Lemma:CAFII-Key} follows the same path for proving Lemma~\ref{Lemma:Lyapunov-Key} and have demanded the use of the Bihari-LaSalle inequality in discrete time.
\end{remark}

\subsection{Optimal tensor algorithms and gradient norm minimization}
By instantiating Algorithm~\ref{Algorithm:CAF-I} and~\ref{Algorithm:CAF-II} with approximate tensor subroutines, we develop two families of optimal $p$-th order tensor algorithms for minimizing the function $\Phi \in \FCal_\ell^p(\br^d)$. The former one include all of existing optimal $p$-th order tensor algorithms~\citep{Gasnikov-2019-Optimal,Jiang-2019-Optimal,Bubeck-2019-Near} while the latter one is new to our knowledge. We also provide one hitherto unknown result that the optimal $p$-th order tensor algorithms in this section minimize the squared gradient norm at a rate of $O(k^{-3p})$. The results extend those for the optimal first-order and second-order algorithms that have been obtained in~\citet{Shi-2018-Understanding} and~\citet{Monteiro-2013-Accelerated}. 

\paragraph{Approximate tensor subroutine.}  The celebrated proximal point algorithms~\citep{Rockafellar-1976-Monotone,Guler-1992-New} (corresponding to implicit time discretization of certain systems) require solving an exact proximal iteration with proximal coefficient $\lambda > 0$ at each iteration:   
\begin{equation}\label{subprob:prox-exact}
x = \argmin_{u \in \br^d} \ \left\{\Phi(u) + \frac{1}{2\lambda}\left\|u - v\right\|^2\right\}. 
\end{equation}
In general, Eq.~\eqref{subprob:prox-exact} can be as hard as minimizing the function $\Phi$ when the proximal coefficient $\lambda \rightarrow +\infty$. Fortunately, when $\Phi \in \FCal_\ell^p(\br^d)$, it suffices to solve the subproblem that minimizes the sum of the $p$-th order Taylor approximation of $\Phi$ and a regularization term, motivating a line of $p$-th order tensor algorithms~\citep{Baes-2009-Estimate,Birgin-2016-Evaluation,Birgin-2017-Worst,Martinez-2017-High,Nesterov-2019-Implementable,Jiang-2020-Unified,Gasnikov-2019-Optimal,Jiang-2019-Optimal,Bubeck-2019-Near}. More specifically, we define   
\begin{equation*}
\Phi_v(u) = \Phi(v) + \langle\nabla\Phi(v), u-v\rangle + \sum_{j=2}^p \frac{1}{j!}\nabla^{(j)}\Phi(v)[u-v]^j + \frac{\ell\|u - v\|^{p+1}}{(p+1)!}.   
\end{equation*}
The algorithms of this subsection are based on either an inexact solution of Eq.~\eqref{subprob:inexact-first}, used in~\citet{Jiang-2019-Optimal}, or an exact solution of Eq.~\eqref{subprob:inexact-second}, used in~\citet{Gasnikov-2019-Optimal} and~\citet{Bubeck-2019-Near}: 
\begin{subequations}
\begin{equation}\label{subprob:inexact-first}
\min_{u \in \br^d} \ \Phi_v(u) + \frac{1}{2\lambda}\|u - v\|^2,
\end{equation}
\begin{equation}\label{subprob:inexact-second}
\min_{u \in \br^d} \ \Phi_v(u). 
\end{equation}
\end{subequations}
In particular, the solution $x_v$ of Eq.~\eqref{subprob:inexact-first} is unique and satisfies $\lambda\nabla\Phi_v(x_v) + x_v - v = 0$. Thus, we denote a \textit{$\hat{\sigma}$-inexact solution} of Eq.~\eqref{subprob:inexact-first} by a vector $x \in \br^d$ satisfying that $\|\lambda\nabla\Phi_v(x) + x - v\| \leq \hat{\sigma}\|x - v\|$ use either it or an exact solution of Eq.~\eqref{subprob:inexact-second} in our tensor algorithms. 
\begin{algorithm}[!t]
\begin{algorithmic}\caption{Optimal $p$-th order Tensor Algorithm I~\citep{Gasnikov-2019-Optimal,Jiang-2019-Optimal,Bubeck-2019-Near}}\label{Algorithm:Optimal-I}
\STATE \textbf{STEP 0:}  Let $x_0, v_0 \in \br^d$, $\hat{\sigma} \in (0, 1)$ and $0 < \sigma_l < \sigma_u < 1$ such that $\sigma_l(1+\hat{\sigma})^{p-1} < \sigma_u(1-\hat{\sigma})^{p-1}$ and $\sigma = \hat{\sigma} + \sigma_u < 1$ be given, and set $A_0 = 0$ and $k=0$. 
\STATE \textbf{STEP 1:} If $0 = \nabla\Phi(x_k)$, then \textbf{stop}. 
\STATE \textbf{STEP 2:} Otherwise, compute a positive scalar $\lambda_{k+1}$ with a $\hat{\sigma}$-inexact solution $x_{k+1} \in \br^d$ of Eq.~\eqref{subprob:inexact-first} satisfying that
\begin{equation*}
\frac{\sigma_l p!}{2\ell} \leq \lambda_{k+1}\left\|x_{k+1} - \tilde{v}_k\right\|^{p-1} \leq \frac{\sigma_u p!}{2\ell}, 
\end{equation*}
or an exact solution $x_{k+1} \in \br^d$ of Eq.~\eqref{subprob:inexact-second} satisfying that
\begin{equation*}
\frac{(p-1)!}{2\ell} \leq \lambda_{k+1}\left\|x_{k+1} - \tilde{v}_k\right\|^{p-1} \leq \frac{p!}{\ell(p+1)}, 
\end{equation*}
where $\tilde{v}_k = \frac{A_k}{A_k + a_{k+1}}x_k + \frac{a_{k+1}}{A_k + a_{k+1}}v_k$ and $a_{k+1}^2 = \lambda_{k+1}(A_k + a_{k+1})$.
\STATE \textbf{STEP 3:} Compute $A_{k+1} = A_k + a_{k+1}$ and $v_{k+1} = v_k - a_{k+1}\nabla\Phi(x_{k+1})$. 
\STATE \textbf{STEP 4:}  Set $k \leftarrow k+1$, and go to \textbf{STEP 1}. 
\end{algorithmic}
\end{algorithm}
\paragraph{First algorithm.} We present the first optimal $p$-th order tensor algorithm in Algorithm~\ref{Algorithm:Optimal-I} and prove that it is Algorithm~\ref{Algorithm:CAF-I} with specific choice of $\theta$. 
\begin{proposition}\label{Prop:opt-I}
Algorithm~\ref{Algorithm:Optimal-I} is Algorithm~\ref{Algorithm:CAF-I} with $\theta = \frac{\sigma_l p!}{2\ell}$ or $\theta = \frac{(p-1)!}{2\ell}$. 
\end{proposition}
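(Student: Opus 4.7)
The plan is to verify, for each of the two branches of Algorithm~\ref{Algorithm:Optimal-I}, that the generated iterates fulfill the three requirements of STEP 2 in Algorithm~\ref{Algorithm:CAF-I}, namely the $\varepsilon$-subgradient inclusion, the relative HPE error bound, and the large-step inequality, with the stated value of $\theta$. The lower end of the prescribed range on $\lambda_{k+1}\|x_{k+1}-\tilde{v}_k\|^{p-1}$ in each branch supplies the large-step condition and fixes $\theta$ immediately, so the substantive work is the construction of an appropriate pair $(w_{k+1},\epsilon_{k+1})$.

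For the first branch, I would take $w_{k+1}=\nabla\Phi(x_{k+1})$ and $\epsilon_{k+1}=0$; the inclusion $w_{k+1}\in\partial_{0}\Phi(x_{k+1})\subseteq\partial_{\epsilon_{k+1}}\Phi(x_{k+1})$ is immediate by convexity and differentiability of $\Phi$. Writing $v=\tilde{v}_k$ and $\lambda=\lambda_{k+1}$, the triangle inequality gives
\begin{equation*}
\|\lambda\nabla\Phi(x_{k+1})+x_{k+1}-v\|\leq \|\lambda\nabla\Phi_v(x_{k+1})+x_{k+1}-v\|+\lambda\|\nabla\Phi(x_{k+1})-\nabla\Phi_v(x_{k+1})\|.
\end{equation*}
The first term is at most $\hat{\sigma}\|x_{k+1}-v\|$ by the definition of a $\hat{\sigma}$-inexact solution of Eq.~\eqref{subprob:inexact-first}. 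The second is controlled by the standard Taylor-remainder estimate $\|\nabla\Phi(u)-\nabla\Phi_v(u)\|\leq \frac{2\ell}{p!}\|u-v\|^p$ valid for $\Phi\in\FCal_\ell^p(\br^d)$, and is therefore bounded by $\frac{2\ell\lambda\|x_{k+1}-v\|^{p-1}}{p!}\|x_{k+1}-v\|\leq \sigma_u\|x_{k+1}-v\|$ using the upper cut-off $\lambda\|x_{k+1}-v\|^{p-1}\leq \frac{\sigma_u p!}{2\ell}$. Combining yields the relative HPE bound with $\sigma=\hat{\sigma}+\sigma_u<1$, and the lower cut-off gives $\theta=\frac{\sigma_l p!}{2\ell}$.

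For the second branch, exact optimality of $x_{k+1}$ for Eq.~\eqref{subprob:inexact-second} gives $\nabla\Phi_v(x_{k+1})=0$, which rewrites as the identity $\tilde{T}_v(x_{k+1})=-\frac{\ell\|x_{k+1}-v\|^{p-1}}{p!}(x_{k+1}-v)$, where $\tilde{T}_v$ denotes the gradient of the pure $p$-th order Taylor polynomial at $v$. Setting $w_{k+1}=\nabla\Phi(x_{k+1})$ and invoking this identity together with the sharper remainder bound $\|\nabla\Phi(x_{k+1})-\tilde{T}_v(x_{k+1})\|\leq\frac{\ell}{p!}\|x_{k+1}-v\|^p$, a direct expansion of $\lambda_{k+1}w_{k+1}+x_{k+1}-v$ splits it into a ``perturbation'' piece of norm at most $\tau_{k+1}\|x_{k+1}-v\|$ and a ``remainder'' piece $(1-\tau_{k+1})(x_{k+1}-v)$, where $\tau_{k+1}=\frac{\ell\lambda_{k+1}\|x_{k+1}-v\|^{p-1}}{p!}\in[\tfrac{1}{2p},\tfrac{1}{p+1}]$. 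The auxiliary slack $\epsilon_{k+1}$ is then taken as the Bregman-style gap produced by the exact-optimality identity, and the upper cut-off $\lambda_{k+1}\|x_{k+1}-v\|^{p-1}\leq\frac{p!}{\ell(p+1)}$ creates enough room to certify $\|\lambda_{k+1}w_{k+1}+x_{k+1}-v\|^2+2\lambda_{k+1}\epsilon_{k+1}\leq\sigma^2\|x_{k+1}-v\|^2$ for some $\sigma<1$; the lower end of the range identifies $\theta=\frac{(p-1)!}{2\ell}$.

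The main obstacle is this second branch: with the naive choice $\epsilon_{k+1}=0$ the triangle-inequality estimate saturates at $\|\lambda_{k+1}w_{k+1}+x_{k+1}-v\|\leq\|x_{k+1}-v\|$ and yields only $\sigma=1$, so extracting $\sigma<1$ forces a genuinely positive $\epsilon_{k+1}$ manufactured from the exact-solution identity $\nabla\Phi_v(x_{k+1})=0$. This is the point at which the upper cut-off $\frac{p!}{\ell(p+1)}$, rather than the $\frac{\sigma_u p!}{2\ell}$ used in the first branch, becomes essential, and parallels the corresponding technical step in~\citet{Gasnikov-2019-Optimal} and~\citet{Bubeck-2019-Near}.
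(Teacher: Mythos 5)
Your proof for the first branch (the $\hat{\sigma}$-inexact minimizer of Eq.~\eqref{subprob:inexact-first}) is correct and in fact unpacks the content of the cited result~\citet[Proposition~3.2]{Jiang-2019-Optimal}: you take $w_{k+1}=\nabla\Phi(x_{k+1})$ and $\epsilon_{k+1}=0$, split $\lambda_{k+1}\nabla\Phi(x_{k+1})+x_{k+1}-\tilde v_k$ via the inexactness bound and the Taylor remainder estimate $\|\nabla\Phi(u)-\nabla\Phi_v(u)\|\leq\frac{2\ell}{p!}\|u-v\|^p$, and the upper cut-off delivers exactly the term $\hat{\sigma}+\sigma_u=\sigma$ appearing in the paper's implication. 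The paper just attributes this to the cited proposition and records the consequence of the cut-offs, so your version is the same argument made explicit.

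The second branch is where you have a genuine gap, and it is the opposite of the one you describe. Having correctly shown that, with $w_{k+1}=\nabla\Phi(x_{k+1})$ and $\epsilon_{k+1}=0$, the triangle-inequality estimate saturates at
$\|\lambda_{k+1}w_{k+1}+x_{k+1}-\tilde v_k\|\leq\|x_{k+1}-\tilde v_k\|$,
you assert that a \emph{positive} $\epsilon_{k+1}$ ``creates room'' for $\sigma<1$. But the large-step HPE inequality reads
\begin{equation*}
\|\lambda_{k+1}w_{k+1}+x_{k+1}-\tilde v_k\|^2 + 2\lambda_{k+1}\epsilon_{k+1} \leq \sigma^2\|x_{k+1}-\tilde v_k\|^2,
\end{equation*}
so $2\lambda_{k+1}\epsilon_{k+1}\geq 0$ is added to the \emph{left}-hand side; increasing $\epsilon_{k+1}$ makes the inequality strictly harder to satisfy, never easier. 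A positive $\epsilon_{k+1}$ only enlarges the admissible set $\partial_{\epsilon_{k+1}}\Phi(x_{k+1})$, but since STEP~3 of Algorithm~\ref{Algorithm:Optimal-I} hard-codes $v_{k+1}=v_k-a_{k+1}\nabla\Phi(x_{k+1})$, you are forced to take $w_{k+1}=\nabla\Phi(x_{k+1})$ anyway, so this freedom is of no use here. What is actually required for the exact branch is a sharper estimate of $\|\lambda_{k+1}\nabla\Phi(x_{k+1})+x_{k+1}-\tilde v_k\|$ itself, strictly below $\|x_{k+1}-\tilde v_k\|$, and your direct expansion into $(1-\tau_{k+1})(x_{k+1}-\tilde v_k)+\lambda_{k+1}R$ cannot produce this by the triangle inequality alone because the two pieces can align. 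The paper resolves this by invoking~\citet[Lemma~3.1]{Bubeck-2019-Near} as a black box; reconstructing that lemma requires using the exact-optimality identity to control the sign of $\langle\nabla\Phi(x_{k+1}),x_{k+1}-\tilde v_k\rangle$ (or, equivalently, the regularization weight in $\Phi_v$ relative to the Lipschitz constant), not introducing slack $\epsilon_{k+1}$. As written, your second branch does not close.
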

\begin{proof}
Given that a pair $(x_k, v_k)_{k \geq 1}$ is generated by Algorithm~\ref{Algorithm:Optimal-I}, we define $w_k = \nabla\Phi(x_k)$ and $\varepsilon_k = 0$. Then $v_{k+1} = v_k - a_{k+1}\nabla\Phi(x_{k+1}) = v_k - a_{k+1}w_{k+1}$. Using~\citet[Proposition~3.2]{Jiang-2019-Optimal} with a $\hat{\sigma}$-inexact solution $x_{k+1} \in \br^d$ of Eq.~\eqref{subprob:inexact-first} at $(\lambda_{k+1}, \tilde{v}_k)$, a triple $\left(x_{k+1}, w_{k+1}, \varepsilon_{k+1}\right) \in \br^d \times \br^d \times (0, +\infty)$ satisfies that 
\begin{equation*}
w_{k+1} \in \partial_{\epsilon_{k+1}} \Phi(x_{k+1}), \quad \|\lambda_{k+1}w_{k+1} + x_{k+1} - \tilde{v}_k\|^2 + 2\lambda_{k+1}\epsilon_{k+1} \leq \sigma^2\|x_{k+1} - \tilde{v}_k\|^2.
\end{equation*}
Since $\theta = \frac{\sigma_l p!}{2\ell} \in (0, 1)$ and $\sigma = \hat{\sigma} + \sigma_u < 1$, we have
\begin{equation*}
\begin{array}{rcl}
\lambda_{k+1}\left\|x_{k+1} - \tilde{v}_k\right\|^{p-1} \leq \frac{\sigma_u p!}{2\ell} & \Longrightarrow & \hat{\sigma} + \frac{2\ell\lambda_{k+1}}{p!}\|x_{k+1}-\tilde{v}_k\|^{p-1} \leq \hat{\sigma} + \sigma_u = \sigma, \\
\lambda_{k+1}\left\|x_{k+1} - \tilde{v}_k\right\|^{p-1} \geq \frac{\sigma_l p!}{2\ell} & \Longrightarrow & \lambda_{k+1}\left\|x_{k+1} - \tilde{v}_k\right\|^{p-1} \geq \theta. 
\end{array}
\end{equation*}
Using the same argument with~\citet[Lemma~3.1]{Bubeck-2019-Near} instead of~\citet[Proposition~3.2]{Jiang-2019-Optimal} and an exact solution $x_{k+1} \in \br^d$ of Eq.~\eqref{subprob:inexact-second}, we obtain the same result with $\theta = \frac{(p-1)!}{2\ell}$. Putting these pieces together yields the desired conclusion. 
\end{proof}
In view of Proposition~\ref{Prop:opt-I}, the iteration complexity derived for Algorithm~\ref{Algorithm:CAF-I} hold for Algorithm~\ref{Algorithm:Optimal-I}. We summarize the results in the following theorem. 
\begin{theorem}\label{Theorem:Optimal-I-Main}
For every integer $k \geq 1$, the objective function gap satisfies
\begin{equation*}
\Phi(x_k) - \Phi(x^\star) = O(k^{-\frac{3p+1}{2}}), 
\end{equation*}
and the squared gradient norm satisfies
\begin{equation*}
\inf_{1 \leq i \leq k} \|\nabla\Phi(x_i)\|^2 = O(k^{-3p}). 
\end{equation*}
\end{theorem}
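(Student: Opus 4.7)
The plan is to derive Theorem~\ref{Theorem:Optimal-I-Main} as a direct consequence of Proposition~\ref{Prop:opt-I} combined with Theorem~\ref{Theorem:CAFI-Main}, since the former identifies Algorithm~\ref{Algorithm:Optimal-I} as an instantiation of the conceptual framework Algorithm~\ref{Algorithm:CAF-I} with a specific choice of $\theta$. In particular, inspecting the proof of Proposition~\ref{Prop:opt-I}, one sees that the identification sets $w_k = \nabla\Phi(x_k)$ and $\epsilon_k = 0$, so the surrogate gradient $w_k$ in the conceptual framework literally coincides with the true gradient $\nabla\Phi(x_k)$ produced by the tensor subroutine.

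From here I would proceed in two short steps. First, for the objective function gap, apply the first conclusion of Theorem~\ref{Theorem:CAFI-Main} with $\theta = \tfrac{\sigma_l p!}{2\ell}$ (when using an inexact solution of Eq.~\eqref{subprob:inexact-first}) or $\theta = \tfrac{(p-1)!}{2\ell}$ (when using an exact solution of Eq.~\eqref{subprob:inexact-second}). This gives $\Phi(x_k) - \Phi(x^\star) = O(k^{-(3p+1)/2})$ directly, where the hidden constant depends on $\theta$, $\sigma$, and $\|v_0 - x^\star\|$ through the bounds established in Lemma~\ref{Lemma:CAFI-Key}. Second, for the squared gradient norm, invoke the second conclusion of Theorem~\ref{Theorem:CAFI-Main}, which yields $\inf_{1 \leq i \leq k} \|w_i\|^2 = O(k^{-3p})$. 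Because $w_i = \nabla\Phi(x_i)$ under the identification, this is equivalent to $\inf_{1 \leq i \leq k} \|\nabla\Phi(x_i)\|^2 = O(k^{-3p})$, which is exactly the claimed bound.

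Since the heavy lifting, namely the discrete Lyapunov argument and the discrete Bihari-LaSalle-type lower bound on $A_k$ in Lemma~\ref{Lemma:CAFI-Key}, is already carried out in the proof of Theorem~\ref{Theorem:CAFI-Main}, there is essentially no additional obstacle. The only subtlety worth spelling out is verifying that the hypotheses of Theorem~\ref{Theorem:CAFI-Main} are genuinely met: one must check that the triple $(x_{k+1}, w_{k+1}, \epsilon_{k+1})$ constructed in the proof of Proposition~\ref{Prop:opt-I} satisfies the three conditions of \textbf{STEP 2} of Algorithm~\ref{Algorithm:CAF-I}, and that both the upper bound $\lambda_{k+1}\|x_{k+1}-\tilde v_k\|^{p-1} \leq \tfrac{\sigma_u p!}{2\ell}$ (or $\tfrac{p!}{\ell(p+1)}$) and the lower bound used to define $\theta$ combine to give $\sigma < 1$. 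Both have already been verified inside the proof of Proposition~\ref{Prop:opt-I}, so the present theorem follows as an immediate corollary.
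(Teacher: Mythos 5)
Your proposal matches the paper's argument exactly: the paper derives Theorem~\ref{Theorem:Optimal-I-Main} immediately from Proposition~\ref{Prop:opt-I} (which identifies Algorithm~\ref{Algorithm:Optimal-I} as Algorithm~\ref{Algorithm:CAF-I} with $\theta = \sigma_l p!/(2\ell)$ or $\theta = (p-1)!/(2\ell)$, taking $w_k = \nabla\Phi(x_k)$ and $\epsilon_k = 0$) together with Theorem~\ref{Theorem:CAFI-Main}, and gives no further proof. Your extra care in checking that STEP~2's three conditions hold and that $\sigma < 1$ is exactly the content already verified inside Proposition~\ref{Prop:opt-I}, so the argument is correct and complete.
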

\begin{remark}
Theorem~\ref{Theorem:Optimal-I-Main} has been derived in~\citet[Theorem~6.4]{Monteiro-2013-Accelerated} for the special case of $p=2$, and a similar result for Nesterov's accelerated gradient descent (the special case of $p=1$) has also been derived in~\citet{Shi-2018-Understanding}. For $p \geq 3$ in general, the first inequality on the objective function gap has been derived independently in~\citet[Theorem~1]{Gasnikov-2019-Optimal},~\citet[Theorem~3.5]{Jiang-2019-Optimal} and~\citet[Theorem~1.1]{Bubeck-2019-Near}, while the second inequality on the squared gradient norm is new to our knowledge. 
\end{remark}
\begin{algorithm}[!t]
\begin{algorithmic}\caption{Optimal $p$-th order Tensor Algorithm II}\label{Algorithm:Optimal-II}
\STATE \textbf{STEP 0:}  Let $x_0, v_0 \in \br^d$, $\hat{\sigma} \in (0, 1)$ and $0 < \sigma_l < \sigma_u < 1$ such that $\sigma_l(1+\hat{\sigma})^{p-1} < \sigma_u(1-\hat{\sigma})^{p-1}$ and $\sigma = \hat{\sigma} + \sigma_u < 1$ be given, and set $\gamma_0 = 1$ and $k=0$. 
\STATE \textbf{STEP 1:} If $0 = \nabla\Phi(x_k)$, then \textbf{stop}. 
\STATE \textbf{STEP 2:} Otherwise, compute a positive scalar $\lambda_{k+1}$ with a $\hat{\sigma}$-inexact solution $x_{k+1} \in \br^d$ of Eq.~\eqref{subprob:inexact-first} satisfying that
\begin{equation*}
\frac{\sigma_l p!}{2\ell} \leq \lambda_{k+1}\|x_{k+1} - \tilde{v}_k\|^{p-1} \leq \frac{\sigma_u p!}{2\ell}, 
\end{equation*}
or an exact solution $x_{k+1} \in \br^d$ of Eq.~\eqref{subprob:inexact-second} satisfying that 
\begin{equation*}
\frac{(p-1)!}{2\ell} \leq \lambda_{k+1}\|x_{k+1} - \tilde{v}_k\|^{p-1} \leq \frac{p!}{\ell(p+1)}, 
\end{equation*}
where $\tilde{v}_k = (1 - \alpha_{k+1})x_k + \alpha_{k+1}v_k$ and $(\alpha_{k+1})^2 = \lambda_{k+1}(1-\alpha_{k+1})\gamma_k$.
\STATE \textbf{STEP 3:} Compute $\gamma_{k+1} = (1 - \alpha_{k+1})\gamma_k$ and $v_{k+1} = v_k - \frac{\alpha_{k+1}\nabla\Phi(x_{k+1})}{\gamma_{k+1}}$. 
\STATE \textbf{STEP 4:}  Set $k \leftarrow k+1$, and go to \textbf{STEP 1}. 
\end{algorithmic}
\end{algorithm}
\paragraph{Second algorithm.} We present the second optimal $p$-th order tensor algorithm in Algorithm~\ref{Algorithm:Optimal-II} which is Algorithm~\ref{Algorithm:CAF-II} with specific choice of $\theta$. The proof is omitted since it is the same as the aforementioned analysis for Algorithm~\ref{Algorithm:Optimal-I}.  
\begin{proposition}
Algorithm~\ref{Algorithm:Optimal-II} is Algorithm~\ref{Algorithm:CAF-II} with $\theta = \frac{\sigma_l p!}{2\ell}$ or $\theta = \frac{(p-1)!}{2\ell}$. 
\end{proposition}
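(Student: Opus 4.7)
The plan is to mirror exactly the strategy used for Algorithm~\ref{Algorithm:Optimal-I} in Proposition~\ref{Prop:opt-I}, with only bookkeeping changes arising from the different estimate sequence. Given a sequence $(x_k, v_k)_{k\geq 0}$ generated by Algorithm~\ref{Algorithm:Optimal-II}, I would set $w_k = \nabla\Phi(x_k)$ and $\varepsilon_k = 0$, so that trivially $w_k \in \partial_{\varepsilon_k}\Phi(x_k)$ by convexity. The STEP 3 updates in the two algorithms are identical: $\gamma_{k+1} = (1-\alpha_{k+1})\gamma_k$ and $v_{k+1} = v_k - \frac{\alpha_{k+1}}{\gamma_{k+1}} w_{k+1}$, and the formulas for $\tilde v_k$ and $\alpha_{k+1}$ in STEP 2 match those of Algorithm~\ref{Algorithm:CAF-II} verbatim, so the recursive structure is preserved without extra work.

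The substantive step is to verify that the approximate tensor subproblem forces the two inequalities in STEP 2 of Algorithm~\ref{Algorithm:CAF-II}, namely the relative error bound $\|\lambda_{k+1}w_{k+1} + x_{k+1} - \tilde v_k\|^2 + 2\lambda_{k+1}\varepsilon_{k+1} \leq \sigma^2\|x_{k+1} - \tilde v_k\|^2$ and the large-step bound $\lambda_{k+1}\|x_{k+1} - \tilde v_k\|^{p-1} \geq \theta$. For the inexact subproblem in Eq.~\eqref{subprob:inexact-first}, I would invoke~\citet[Proposition~3.2]{Jiang-2019-Optimal} at the pair $(\lambda_{k+1}, \tilde v_k)$; the upper bound $\lambda_{k+1}\|x_{k+1} - \tilde v_k\|^{p-1} \leq \tfrac{\sigma_u p!}{2\ell}$ combines with $\sigma = \hat\sigma + \sigma_u < 1$ to yield the error bound with $\varepsilon_{k+1} = 0$, while the lower bound $\lambda_{k+1}\|x_{k+1} - \tilde v_k\|^{p-1} \geq \tfrac{\sigma_l p!}{2\ell}$ is exactly the large-step condition with $\theta = \tfrac{\sigma_l p!}{2\ell}$. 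For the exact subproblem in Eq.~\eqref{subprob:inexact-second}, the same argument goes through using~\citet[Lemma~3.1]{Bubeck-2019-Near} and gives $\theta = \tfrac{(p-1)!}{2\ell}$.

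Since the only distinction between Algorithms~\ref{Algorithm:Optimal-I} and~\ref{Algorithm:Optimal-II} is the pair $(A_k, a_{k+1})$ versus $(\gamma_k, \alpha_{k+1})$, and neither the construction of $(w_{k+1}, \varepsilon_{k+1})$ nor the bounds on $\lambda_{k+1}\|x_{k+1} - \tilde v_k\|^{p-1}$ depends on which of these estimate sequences is employed, I do not anticipate any genuine obstacle beyond careful transcription. The one minor point requiring a brief check is well-posedness of $\alpha_{k+1}$: the equation $\alpha^2 + \lambda_{k+1}\gamma_k \alpha - \lambda_{k+1}\gamma_k = 0$ admits a unique root in $(0,1)$ whenever $\lambda_{k+1}, \gamma_k > 0$, which is immediate from the intermediate value theorem applied to this monotone-increasing quadratic on $[0,1]$.
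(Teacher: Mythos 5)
Your proof is correct and takes essentially the same route as the paper, which simply declares the argument identical to that of Proposition~\ref{Prop:opt-I} for Algorithm~\ref{Algorithm:Optimal-I}: set $w_k = \nabla\Phi(x_k)$, $\varepsilon_k = 0$, invoke~\citet[Proposition~3.2]{Jiang-2019-Optimal} or~\citet[Lemma~3.1]{Bubeck-2019-Near} to get the relative-error and large-step conditions, and observe that nothing in that verification depends on the $(\gamma_k, \alpha_{k+1})$ versus $(A_k, a_{k+1})$ bookkeeping. Your additional remark on well-posedness of $\alpha_{k+1}$ is a correct and welcome sanity check, though not strictly required for the claimed equivalence.
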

\begin{theorem}\label{Theorem:Optimal-II-Main}
For every integer $k \geq 1$, the objective gap satisfies
\begin{equation*}
\Phi(x_k) - \Phi(x^\star) = O(k^{-\frac{3p+1}{2}}),
\end{equation*}
and the squared gradient norm satisfies
\begin{equation*}
\inf_{1 \leq i \leq k} \|\nabla\Phi(x_i)\|^2 = O(k^{-3p}). 
\end{equation*}
\end{theorem}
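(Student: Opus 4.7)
The strategy is to reduce Theorem~\ref{Theorem:Optimal-II-Main} to the already-established Theorem~\ref{Theorem:CAFII-Main}, exactly as Theorem~\ref{Theorem:Optimal-I-Main} reduces to Theorem~\ref{Theorem:CAFI-Main}. First, I would invoke the proposition stated immediately before the theorem, which asserts that Algorithm~\ref{Algorithm:Optimal-II} is Algorithm~\ref{Algorithm:CAF-II} instantiated with $\theta = \frac{\sigma_l p!}{2\ell}$ (in the $\hat{\sigma}$-inexact case) or $\theta = \frac{(p-1)!}{2\ell}$ (in the exact case). With this identification, the iterates $(x_k, v_k, \gamma_k, \alpha_{k+1}, \lambda_{k+1})$ generated by Algorithm~\ref{Algorithm:Optimal-II} can be treated as a particular realization of the conceptual framework, and the conclusions of Theorem~\ref{Theorem:CAFII-Main} apply verbatim.

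The identification proceeds exactly as in the proof of Proposition~\ref{Prop:opt-I}, with $A_k$ replaced by $\gamma_k$ throughout. One sets $w_{k+1} := \nabla\Phi(x_{k+1})$, which is consistent with STEP~3 of Algorithm~\ref{Algorithm:Optimal-II} ($v_{k+1} = v_k - \frac{\alpha_{k+1}}{\gamma_{k+1}}\nabla\Phi(x_{k+1})$) and with STEP~3 of Algorithm~\ref{Algorithm:CAF-II} ($v_{k+1} = v_k - \frac{\alpha_{k+1}}{\gamma_{k+1}}w_{k+1}$). It remains to verify the three conditions in STEP~2 of Algorithm~\ref{Algorithm:CAF-II}: existence of $\varepsilon_{k+1} \geq 0$ with $w_{k+1} \in \partial_{\varepsilon_{k+1}}\Phi(x_{k+1})$, the relative-error inequality $\|\lambda_{k+1}w_{k+1} + x_{k+1} - \tilde v_k\|^2 + 2\lambda_{k+1}\varepsilon_{k+1} \leq \sigma^2\|x_{k+1} - \tilde v_k\|^2$, and the large-step condition $\lambda_{k+1}\|x_{k+1} - \tilde v_k\|^{p-1} \geq \theta$. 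The first two are the content of~\citet[Proposition~3.2]{Jiang-2019-Optimal} (for a $\hat{\sigma}$-inexact solution of~\eqref{subprob:inexact-first}) or~\citet[Lemma~3.1]{Bubeck-2019-Near} (for an exact solution of~\eqref{subprob:inexact-second}), using $\sigma = \hat{\sigma} + \sigma_u < 1$; the third follows directly from the lower bound on $\lambda_{k+1}\|x_{k+1} - \tilde v_k\|^{p-1}$ prescribed in STEP~2 of Algorithm~\ref{Algorithm:Optimal-II}.

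Having established the identification, I would invoke Theorem~\ref{Theorem:CAFII-Main}: the objective-gap estimate $\Phi(x_k) - \Phi(x^\star) = O(k^{-(3p+1)/2})$ transfers immediately, and the squared-gradient-norm estimate $\inf_{1 \leq i \leq k}\|w_i\|^2 = O(k^{-3p})$ translates into $\inf_{1 \leq i \leq k}\|\nabla\Phi(x_i)\|^2 = O(k^{-3p})$ because $w_i = \nabla\Phi(x_i)$ under the identification. This is precisely the reason the paper states that the proof is omitted.

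The main (and only) subtlety is the verification of the relative-error inequality, which is the nontrivial content of the tensor-subroutine analyses in~\citet{Jiang-2019-Optimal} and~\citet{Bubeck-2019-Near}. Since these results are already used in Proposition~\ref{Prop:opt-I} and since the bookkeeping of $\gamma_k$ in Algorithm~\ref{Algorithm:CAF-II} is strictly parallel to that of $A_k$ in Algorithm~\ref{Algorithm:CAF-I} (compare Lemma~\ref{Lemma:CAFI-Key} with Lemma~\ref{Lemma:CAFII-Key}), no new technical obstacle appears beyond what has already been handled in the preceding subsections.
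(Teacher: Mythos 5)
Your proposal is correct and takes essentially the same approach the paper intends: the paper omits the proof precisely because it reduces to Theorem~\ref{Theorem:CAFII-Main} via the unnamed proposition identifying Algorithm~\ref{Algorithm:Optimal-II} as an instance of Algorithm~\ref{Algorithm:CAF-II} with $\theta = \frac{\sigma_l p!}{2\ell}$ or $\theta = \frac{(p-1)!}{2\ell}$, mirroring the argument for Proposition~\ref{Prop:opt-I} and Theorem~\ref{Theorem:Optimal-I-Main}. Your verification of the three conditions in STEP~2 and the identification $w_i = \nabla\Phi(x_i)$ supply exactly the details the paper leaves implicit.
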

\begin{remark}
The approximate tensor subroutine in Algorithm~\ref{Algorithm:Optimal-I} and~\ref{Algorithm:Optimal-II} can be efficiently implemented usinga novel bisection search scheme. We refer the interested readers to~\citet{Jiang-2019-Optimal} and~\citet{Bubeck-2019-Near} for the details. 
\end{remark}

\section{Conclusions}\label{sec:conclusions}
We have presented a closed-loop control system for modeling optimal tensor algorithms for smooth convex optimization and provided continuous-time and discrete-time Lyapunov functions for analyzing the convergence properties of this system and its discretization. Our framework provides a systematic way to derive discrete-time $p$-th order optimal tensor algorithms, for $p \geq 2$, and simplify existing analyses via the use of a Lyapunov function. A key ingredient in our framework is the algebraic equation, which is not present in the setting of $p=1$, but is essential for deriving optimal acceleration methods for $p \geq 2$. Our framework allows us to infer that a certain class of $p$-th order tensor algorithms minimize the squared norm of the gradient at a fast rate of $O(k^{-3p})$ for smooth convex functions.

It is worth noting that one could also consider closed-loop feedback control of the velocity. This is called nonlinear damping in the PDE literature; see~\citet{Attouch-2020-Fast} for recent progress in this direction. There are also several other avenues for future research. In particular, it is of interest to bring our perspective into register with the Lagrangian and Hamiltonian frameworks that have proved productive in recent work~\citep{Wibisono-2016-Variational,Diakonikolas-2020-Generalized,Muehlebach-2020-Optimization,Francca-2021-Dissipative} and the control-theoretic viewpoint of~\citet{Lessard-2016-Analysis} and~\citet{Hu-2017-Dissipativity}. We would hope for this study to provide additional insight into the geometric or dynamical role played by the algebraic equation for modeling the continuous-time dynamics. Moreover, we wish to study possible extensions of our framework to nonsmooth optimization by using differential inclusions~\cite{Vassilis-2018-Differential} and monotone inclusions. The idea is to consider the setting in which $0 \in T(x)$ where $T$ is a maximally monotone operator in a Hilbert space~\citep{Alvarez-2001-Inertial,Attouch-2011-Continuous,Mainge-2013-First,Attouch-2013-Global,Abbas-2014-Newton,Attouch-2016-Dynamic,Bot-2016-Second,Attouch-2018-Convergence,Attouch-2020-Convergence,Attouch-2020-Newton,Attouch-2020-Continuous}. Finally, given that we know that direct discretization of our closed-loop control system cannot recover Nesterov's optimal high-order tensor algorithms~\citep[Section~4.3]{Nesterov-2018-Lectures}, it is of interest to investigate the continuous-time limit of Nesterov's algorithms and see whether the algebraic equation plays a role in their analysis.

\section*{Acknowledgments}
The authors would like to thank associate editor and two anonymous reviewers for constructive comments that improve the presentation of this paper. This work was supported in part by the Mathematical Data Science program of the Office of Naval Research under grant number N00014-18-1-2764.

\bibliographystyle{plainnat}
\bibliography{ref}

\end{document}